\def\l@subsection{\@tocline{1}{0pt}{2pc}{1pc}{}}
\def\l@subsubsection{\@tocline{2}{0pt}{2pc}{1pc}{}}
\DeclareMathAlphabet{\pazocal}{OMS}{zplm}{m}{n}
\tikzset{>=stealth}
  \newcommand{\calB}{\mathcal{B}}
  \newcommand{\calN}{\mathcal{N}}
  \newcommand{\calU}{\mathcal{U}}
    \newcommand{\calV}{\mathcal{V}}
  \newcommand{\NN}{\mathbb{N}}
  \newcommand{\ZZ}{\mathbb{Z}}
  \newcommand{\bfa}{\textbf{a}}
  \newcommand{\bfb}{\textbf{b}}
  \newcommand{\bfz}{\textbf{z}}  
  \newcommand{\gothic}{\mathfrak}
  \newcommand{\Ga}{{\gothic a}}
  \newcommand{\Gb}{{\gothic b}}
  \newcommand{\Gc}{{\gothic c}}
  \newcommand{\go}{{\gothic o}}
  \newtheorem{theorem}{Theorem}[section]
  \newtheorem{proposition}[theorem]{Proposition}
  \newtheorem{corollary}[theorem]{Corollary}
  \newtheorem{lemma}[theorem]{Lemma}
  \newtheorem{conjecture}[theorem]{Conjecture}
  \newtheorem*{claim*}{Claim}
  \newtheorem{introthm}{Theorem}
  \newtheorem{introcor}[introthm]{Corollary}
  \theoremstyle{definition}
  \newtheorem{definition}[theorem]{Definition}
  \newtheorem*{question*}{Question}
  \newtheorem*{answer*}{Answer}
  \newtheorem*{application*}{Application}
  \newtheorem{notation}[theorem]{Notation}
  \theoremstyle{remark}
  \newtheorem{remark}[theorem]{Remark}
  \newtheorem*{remark*}{Remark}
  \newcommand{\defref}[1]{Definition~\ref{#1}}
  \newcommand{\pka}{\partial_{\kappa}}
  \newcommand{\Teich}{{Teichm\"uller }} 
  \newcommand{\Ham}{{Hamenst\"adt }}
  \newcommand{\sQ}{{\sf Q}}
  \renewcommand{\bfa}{{\sf a}}
  \newcommand{\qq}{{\sf q}}   
  \newcommand{\rr}{{\sf r}}
  \newcommand{\param}{{\mathchoice{\mkern1mu\mbox{\raise2.2pt\hbox{$
  \centerdot$}}
  \mkern1mu}{\mkern1mu\mbox{\raise2.2pt\hbox{$\centerdot$}}\mkern1mu}{
  \mkern1.5mu\centerdot\mkern1.5mu}{\mkern1.5mu\centerdot\mkern1.5mu}}}
\DeclarePairedDelimiterX{\norm}[1]{\lvert}{\rvert}{#1}
\DeclarePairedDelimiterX{\Norm}[1]{\lVert}{\rVert}{#1}
  \renewcommand{\setminus}{{\smallsetminus}}
  \newcommand{\ST}{\mathbin{\Big|}} 
  \newcommand{\from}{\colon\thinspace}
\newcommand{\K}{\kappa}
\newcommand{\ob}{\mathfrak{o}}
\newcommand{\CAT}{\ensuremath{\operatorname{CAT}(0)}\xspace}         
\title[Properties of the QR boundary]{Topological and dynamic properties of the sublinearly Morse boundary and the quasi-redirecting boundary}
 \author{Jacob Garcia}
 \address{Department of Mathematical Sciences,  Smith College, Northampton, MA, USA}
 \email{jgarcia46@smith.edu}
  \author{Yulan Qing}
 \address{Department of Mathematics,  University of Tennessee at Knoxville, Knoxville, TN, USA}
 \email{yqing@utk.edu}
  \author{Elliott Vest}
 \address{Department of Mathematics,  University of California Riverside, Riverside, CA, USA}
 \email{elliott.vest@email.ucr.edu}
\begin{document}
\begin{abstract}
Sublinearly Morse boundaries of proper geodesic spaces are introduced by Qing, Rafi and Tiozzo. Expanding on this work, Qing and Rafi recently developed the quasi-redirecting boundary, denoted $\partial G$, to include all directions of metric spaces at infinity. Both boundaries are topological spaces that consist of equivalence classes of quasi-geodesic rays and are quasi-isometrically invariant. In this paper, we study these boundaries when the space is equipped with a geometric group action. In particular, we show that $G$ acts minimally on $\pka G$ and that contracting elements of G induces a weak north-south dynamic on $\pka G$. We also prove, when $\partial G$ exists and $|\pka G|\geq3$, $G$ acts minimally on $\partial G$ and $\partial G$ is a second countable topological space. The last section concerns the restriction to proper \CAT spaces and finite dimensional \CAT cube complexes. We show that when $G$ acts geometrically on a finite dimensional \CAT cube complex (whose QR boundary is assumed to exist), then a nontrivial QR boundary implies the existence of a Morse element in $G$. Lastly, we show that if $X$ is a proper cocompact \CAT space, then $\partial G$ is a visibility space. 

\end{abstract}

\maketitle
\setcounter{tocdepth}{1}
 \tableofcontents
 
\section{Introduction}
Gromov introduced gromov hyperbolicity and hyperbolic groups in \cite{gromov}. He also introduced a compactification of gromov hyperbolic spaces call the Gromov boundary. A central fact connecting these concepts is that Gromov hyperbolicity is a quasi-isometry invariant: quasi-isometries extend equivariantly to homeomorphisms on the Gromov boundary. In the years after, the Gromov boundary has proven to be of central importance in understanding hyperbolic groups and hyperbolic manifolds, see \cite{kapovich2002boundaries} for a survey of these results. In recent decades, various boundaries have been constructed for non-hyperbolic groups to extend the program started by Gromov. In particular, Rafi-Qing and Tiozzo (\cite{QRT1}, \cite{QRT2}) defined the sublinearly Morse boundary, including geodesic rays whose Morse-ness can decay sublinearly with distance from the base point. These boundaries are the first 
metrizable topological spaces for all finitely generated groups. Their hyperbolic features are discussed in works such as \cite{IZ24}, \cite{MQZ20}, \cite{QZ}. These studies center around \CAT spaces and \CAT cube complexes. In the first part of this paper, we continue to study their hyperbolic-like features in the more general setting of proper geodesic spaces.

\subsection{Minimality of the group action} 
A group is said to act minimally on a topological space if every orbit is a dense subset of the space. We show that this property is enjoyed by $\kappa$-boundaries. In contrast with the identifications with Poisson boundaries in various settings, the minimality result is evidence to the fact that the boundary is not too large in excess of the orbit of a point under the group action. The first result fully generalizes the same property shown in \CAT spaces in \cite[Theorem 3.3]{QZ}.
\begin{introthm}\label{thm:minimality of sublinear}
Every finitely generated group $G$ acts minimally on $\pka G$. That is, if $\Ga \in \pka G$ is any element in $\pka G$, we have that $G \cdot \Ga $ is a dense subset of $\pka G$.
\end{introthm}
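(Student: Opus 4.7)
The plan is to adapt the \CAT(0) argument of \cite[Theorem 3.3]{QZ} to the general proper geodesic setting. Fix $[\alpha], [\beta] \in \pka G$; I want to produce a sequence of group elements $g_n \in G$ such that $g_n \cdot [\alpha] \to [\beta]$ in the $\kappa$-Morse boundary topology. Recall from \cite{QRT1, QRT2} that a neighborhood basis at $[\beta]$ in $\pka G$ consists of equivalence classes whose representative rays (based at the fixed basepoint $o$) fellow-travel $\beta$ out to radius $R$ within sublinear $\kappa(R)$ error. It therefore suffices to produce, for each large $R$, a translate $g_R \cdot [\alpha]$ possessing such a representative.

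The first step uses coboundedness of the $G$-action on its Cayley graph: there is a uniform constant $K$, depending only on the generating set, such that for every $R$ one can choose $g_R \in G$ with $d(g_R \cdot o, \beta(R)) \leq K$. The ray $g_R \cdot \alpha$ is then a $\kappa$-Morse quasi-geodesic ray based at $g_R \cdot o$, which lies within $K$ of $\beta(R)$. Let $\tilde\alpha_R$ be any $\kappa$-Morse representative of $g_R \cdot [\alpha]$ based at $o$, for instance obtained by concatenating a geodesic from $o$ to $g_R \cdot o$ with $g_R \cdot \alpha$ and passing to a quasi-geodesic approximation if necessary.

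The heart of the argument is to verify that $\tilde\alpha_R$ lies in the scale-$R$ basic neighborhood of $[\beta]$. The sublinear Morse property of $\beta$ supplies the crucial mechanism: any quasi-geodesic joining $o$ to a point within $K$ of $\beta(R)$ must stay inside a $\kappa(R)$-neighborhood of $\beta|_{[0,R]}$. Applied to the initial segment of $\tilde\alpha_R$, this forces $\tilde\alpha_R$ to $\kappa$-fellow-travel $\beta$ out to radius $R$, placing $g_R \cdot [\alpha]$ in the prescribed neighborhood of $[\beta]$. Letting $R \to \infty$ gives $g_R \cdot [\alpha] \to [\beta]$, so $G \cdot [\alpha]$ is dense in $\pka G$.

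The main obstacle is uniform bookkeeping of the quasi-geodesic constants and $\kappa$-equivalence data. One must confirm that the representative $\tilde\alpha_R$ can be chosen with quasi-geodesic parameters depending only on $[\alpha]$ and $K$, not on $R$, so that the constants in the sublinear Morse lemma remain controlled as $R$ grows; equivalently, that the class $g_R \cdot [\alpha]$ is witnessed at $o$ by a ray that genuinely passes near $g_R \cdot o$ before tracking $g_R \cdot \alpha$. This is a standard consequence of the QRT framework together with the invariance of $\kappa$-equivalence under concatenation with bounded segments. Once this uniformity is established, the proof reduces to the sublinear comparison $K \ll \kappa(R)$ for large $R$, which is immediate from $\kappa(R) \to \infty$.
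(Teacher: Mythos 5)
There is a genuine gap at the very first step of your argument, and it is precisely the difficulty the paper's proof is organized around. You translate an \emph{arbitrary} class $[\alpha]$ by elements $g_R$ with $g_R\go$ near $\beta(R)$ and claim that the concatenation $[\go, g_R\go]\cup g_R\alpha$ (after "quasi-geodesic approximation") represents $g_R[\alpha]$ and tracks $\beta$ up to radius $R$. But the translated ray $g_R\alpha$ may head straight back toward, and past, the basepoint, in which case the concatenation backtracks and is not a quasi-geodesic with uniform constants, and no representative of $g_R[\alpha]$ based at $\go$ passes anywhere near $\beta(R)$. Concretely, in a free group take $\beta$ tending to $g^{+\infty}$, $\alpha$ tending to $g^{-\infty}$, and $g_R=g^R$: then $g_R[\alpha]=[\alpha]$ for every $R$, so your sequence simply does not converge to $[\beta]$. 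This is why the paper does not translate the given class directly: Lemma~\ref{Lem:g_ia leaves ball of radius R} uses the hypothesis $|\pka X|\geq 3$ (absent from your argument) and a third class to produce an \emph{auxiliary} class $\Ga$, adapted to $\Gb$, whose translates $g_i a$ have nearest-point projections of $\go$ escaping every ball; the ray actually used is $\lambda_i=[\go,p_i]\cup[p_i,g_i\cdot a(\infty)]$ built by projection surgery (Lemmas~\ref{Lem:(81,2K)-quasi-geo-ray} and~\ref{Lem: Lambda is k-Morse}), not by concatenating $[\go,g_i\go]$ with the translate. Density of the orbit of an \emph{arbitrary} class $\Gc$ is then obtained in Theorem~\ref{density} by a two-step composition: first $g_i\Ga\in\calU(b,r)$, then, pulling back and nesting neighborhoods via \cite[Claim 4.6]{QRT2} and the symmetry of Remark~\ref{Remark:Symmetry in construction}, an $h_j$ with $g_ih_j\Gc\in\calU(b,r)$. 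Your proposal has no mechanism playing the role of this auxiliary class or of the composition, so the escape-to-infinity of the projection points, which is what makes the sublinear Morse property of $\beta$ applicable at scale $r$, is never established.

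A secondary, smaller issue: membership in the basic neighborhood $\calU(b,r)$ of Definition~\ref{D:open sets in kappa morse} requires \emph{every} bounded-constant representative of the class to lie in $\calN_\kappa(b,m_b(\qq,\sQ))$ up to radius $r$, not just one good representative. You flag this as "uniform bookkeeping," but it is where the paper needs the Morse gauge of $\lambda_i$ to be independent of $i$ (Remark~\ref{Rem: Not-dependent on i}) together with Proposition~\ref{Prop: lambda_i is in the open set}; it does not follow merely from the sublinear comparison $K\ll\kappa(R)$. The first gap, however, is the decisive one: without restricting which class is translated (or composing two translations), the asserted convergence $g_R[\alpha]\to[\beta]$ is false.
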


Aiming to understand all directions up to quasi-isometry, and to compactify the sublinearly Morse boundary, Rafi-Qing recently introduced a new boundary for metric spaces called the \emph{quasi-redirecting boundary}, or the QR boundary for short \cite{QR24}. The QR boundary expands the sublinearly Morse boundary topologically and is often compact, which is one of its key advantages.

Here is the main idea of the QR boundary: let $\alpha, \beta \colon [0, \infty) \to X$ be two quasi-geodesic rays in a  metric space $X$. We say $\alpha$ can be \textit{quasi-redirected} to $\beta$ (and write $\alpha \preceq \beta$) if there exists a pair of constant $(q, Q)$ such that for every $r >0$, there exists a $(q, Q)$--quasi-geodesic ray $\gamma$ that is identical to $\alpha$ 
up to distance $r$, and eventually $\gamma$ becomes identical to $\beta$. We say $\alpha \simeq \beta$ if $\alpha \preceq \beta$ and $\beta \preceq \alpha$. The resulting set of equivalence classes forms a poset, denoted by $P(X)$. The poset $P(X)$ equipped with a ``cone-like topology'' is called the \textit{quasi-redirecting boundary} (QR boundary) of $X$ and we denote it by $\partial X$.

\begin{figure}[H]
\centering
\begin{tikzpicture}[scale=0.7] 
\draw[blue, thick] (0,0) -- (9,0) node[below] {$\beta$};
\draw[red, thick] (0,0) -- (0,4) node[left] {$\alpha$};
\draw[|-|, thin] (-0.4,0)--(-0.4,1.8);
\node[left] at (-0.4,0.9) {$r$};
\node at (2,2) {$\gamma$};

 \pgfsetlinewidth{1pt}
 \pgfsetplottension{.75}
 \pgfplothandlercurveto
 \pgfplotstreamstart
 \pgfplotstreampoint{\pgfpoint{0 cm}{0cm}}  
 \pgfplotstreampoint{\pgfpoint{.3 cm}{3 cm}}   
 \pgfplotstreampoint{\pgfpoint{2 cm}{1.4 cm}}
 \pgfplotstreampoint{\pgfpoint{3 cm}{.7 cm}}
 \pgfplotstreampoint{\pgfpoint{4.5 cm}{.3 cm}}
 \pgfplotstreampoint{\pgfpoint{6 cm}{.1 cm}} 
 \pgfplotstreampoint{\pgfpoint{8 cm}{.005 cm}} 
 \pgfplotstreampoint{\pgfpoint{9 cm}{0 cm}} 
 \pgfplotstreamend
 \pgfusepath{stroke} 
 \end{tikzpicture}
\caption{The ray $\alpha$ can be quasi-redirected to $\beta$ at radius $r$.}
\end{figure}
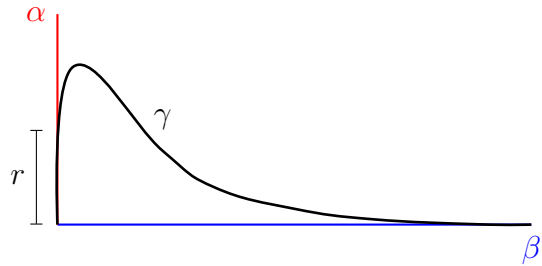
\begin{introthm}\label{introB}
  The sublinearly Morse boundary is a dense subset of the QR boundary. Assume that $\pka G$ exists and $|\pka G|\geq3$. Let $\bfb$ be any element of $\partial G$, then there exists an infinite sequence $\{\Ga_i\} \in \pka G$ such that the sequence converges to $\bfb$ in the topology of $\partial G$. 
\end{introthm}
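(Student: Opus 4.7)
The plan is to approximate an arbitrary $\bfb \in \partial G$ by a sequence of $G$-translates of a single $\kappa$-Morse ray, where each translate is arranged so that a representative initially shadows $\bfb$ to greater and greater depth. Fix a basepoint $\fo \in X$, a quasi-geodesic representative $\beta$ of $\bfb$ with $\beta(0) = \fo$, and a $\kappa$-Morse ray $\eta$ based at $\fo$; such an $\eta$ exists because $\pka G$ is non-empty.

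By cocompactness of the $G$-action, for each $n \in \NN$ there is an element $g_n \in G$ with $d(g_n \fo, \beta(n)) \leq D$ for a uniform constant $D$. Set $\Ga_n := [g_n \cdot \eta] \in \pka G$; this is a $\kappa$-Morse class admitting a representative based within distance $D$ of $\beta(n)$. To prove $\Ga_n \to \bfb$ in the cone-like topology on $\partial G$, I construct for each $n$ a quasi-geodesic ray $\widetilde{\alpha}_n$ that coincides with $\beta$ on $[0, n]$ and is eventually identical to $g_n \cdot \eta$, by concatenating $\beta|_{[0, n]}$ with a re-parametrization of $g_n \cdot \eta$ at the join point near $\beta(n)$. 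Assuming $\widetilde{\alpha}_n$ is a $(q, Q)$-quasi-geodesic ray with constants uniform in $n$, it exhibits a quasi-redirection from $\bfb$ to $\Ga_n$ at radius $n$. Since basic neighborhoods of $\bfb$ in the cone-like topology are defined in terms of such quasi-redirections, any given basic neighborhood will contain $\Ga_n$ for all sufficiently large $n$, yielding $\Ga_n \to \bfb$.

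The main technical obstacle is the uniform quasi-geodesicity of the concatenations $\widetilde{\alpha}_n$, since a generic concatenation of two quasi-geodesic rays at a common endpoint need not be quasi-geodesic if the tail initially backtracks along the head. To circumvent this I would leverage the hypothesis $|\pka G| \geq 3$ together with Theorem~\ref{thm:minimality of sublinear}: by minimality the orbit $G \cdot [\eta]$ is dense in $\pka G$, so one may replace $\eta$ by a suitable translate $h_n \cdot \eta$ so that the ray $g_n h_n \cdot \eta$ at $g_n h_n \cdot \fo$ initially points away from $\fo$ rather than back toward it. The $\kappa$-Morseness of this ray then supplies a Morse lemma estimate that forces any quasi-geodesic from $\fo$ to a far point along it to travel within $\kappa$-sublinear distance of the join point near $\beta(n)$, which is precisely the no-backtracking estimate needed for uniform quasi-geodesicity of $\widetilde{\alpha}_n$.

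Finally, the infinite-sequence assertion is automatic once density has been established: either the sequence $\{\Ga_n\}$ already contains infinitely many distinct classes, or else it is eventually constant and $\bfb \in \pka G$, in which case the density of $G \cdot [\eta]$ in $\pka G$ supplied by Theorem~\ref{thm:minimality of sublinear} directly yields an infinite sequence of distinct $\kappa$-Morse classes converging to $\bfb$ inside $\pka G$, hence inside $\partial G$.
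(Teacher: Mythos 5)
There is a genuine gap, and it is directional: your redirection goes the wrong way relative to the QR topology. By Definition~\ref{Def:The-In-Topology}, $\Ga_n \in \calU(\bfb,r)$ means that \emph{every} $\qq$--ray in the class $\Ga_n$ can be $F_{\bfb}(\qq)$--redirected to the central element of $\bfb$ at radius $r$; that is, one needs quasi-geodesic rays which agree with a representative of $\Ga_n$ out to radius $r$ and \emph{eventually coincide with $b$}. Your ray $\widetilde{\alpha}_n$ does the opposite: it agrees with $\beta$ on $[0,n]$ and eventually coincides with $g_n\cdot\eta$, so it redirects $\bfb$ toward $\Ga_n$. That is (roughly) the membership condition for $\bfb$ in neighborhoods of the $\Ga_n$, not for $\Ga_n$ in neighborhoods of $\bfb$, so the claimed convergence $\Ga_n\to\bfb$ does not follow from your construction. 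Moreover, even with the direction corrected, membership in $\calU(\bfb,r)$ quantifies over \emph{all} $\qq$--rays in the class $\Ga_n$ with constants controlled by the single function $F_{\bfb}$, whereas you only produce one concatenated representative per $n$; this universal quantification is where the paper's proof of Theorem~\ref{Theorem: Miminality QR} does its real work.

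Concretely, the paper fixes (via Lemma~\ref{Lem:g_ia leaves ball of radius R} and Lemma~\ref{Lem:(81,2K)-quasi-geo-ray}, which is where $|\pka G|\geq 3$ enters) a $\kappa$--Morse class $\bfa$ whose translates $g_n\cdot a$ along $b$ leave every ball about $\ob$ and admit uniform $(27,3K)$--quasi-geodesics through $\pi_{g_n a}(\ob)$ with tail $b$. Then, for an \emph{arbitrary} $\qq$--ray $\alpha\in\bfa_n=g_n\cdot\bfa$ with $\qq\leq\qq_{\rm max}$ (large constants being dispatched by Lemma~\ref{Lem:Max}), the $\kappa$--Morseness of $a$ forces $\pi_\alpha(\ob_n)$ to lie near $g_n\cdot a$, hence outside $B_r$, and the surgeries of Lemma~\ref{surgery} turn $[\ob,\pi_\alpha(\ob_n)]_\alpha\cup[\pi_\alpha(\ob_n),\ob_n]$ together with a tail of $b$ into a $(12q,3Q)$--quasi-geodesic that redirects $\alpha$ to $b$ at radius $r$ --- exactly the condition defining $\calU(\bfb,r)$. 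Your substitute for this, invoking minimality (Theorem~\ref{thm:minimality of sublinear}) to choose translates $h_n$ so that $g_nh_n\cdot\eta$ ``points away from $\fo$,'' does not do the needed work: minimality is a density statement about an orbit in $\pka G$ and gives no quantitative control on closest-point projections to translated rays, which is what both the no-backtracking estimate and, more importantly, the uniform redirection of every representative of $\Ga_n$ to $b$ require.
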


 In \cite{QR24}, it is shown that the QR boundary is second countable when the space in question is an asymptotically tree graded spaces with mono-directional subspace. A consequence of Theorem~\ref{introB} is that we now obtain second countability for all quasi-redirecting boundaries when exists.

\begin{introcor}
  Assume that $\partial G$ exists.  If $|\pka G|\geq 3$, then $\partial G$ is second countable. 
\end{introcor}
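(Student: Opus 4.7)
The plan is to combine the minimality of the $G$-action on $\pka G$ (Theorem~\ref{thm:minimality of sublinear}) with the density statement of Theorem~\ref{introB} to produce a countable dense subset of $\partial G$, and then to upgrade this separability into a countable basis by exploiting the structure of the cone-like topology on $\partial G$.

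First I would fix any $\Ga \in \pka G$, which is possible since $|\pka G|\geq 3$. By Theorem~\ref{thm:minimality of sublinear} the orbit $G \cdot \Ga$ is dense in $\pka G$, and since a finitely generated group is countable, this orbit is itself a countable set. By Theorem~\ref{introB} the subspace $\pka G$ is dense in $\partial G$, so transitivity of density yields that $G \cdot \Ga$ is a countable dense subset of $\partial G$. This immediately gives separability of $\partial G$.

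To produce a countable basis I would use the cone-like description of the topology of $\partial G$ from \cite{QR24}: basic open neighborhoods of a point $\bfb \in \partial G$ are parameterized by a choice of representative quasi-geodesic ray for $\bfb$ together with a radius $r>0$, and integer radii already suffice to give a neighborhood basis at each point. The natural candidate basis $\calB$ then consists of all cone neighborhoods based at representatives in $G \cdot \Ga$ with positive integer radii, which is a countable collection. To show $\calB$ is an honest basis, given an open set $U \ni \bfb$ I would pick a cone neighborhood $V$ of $\bfb$ with $V \subseteq U$, apply Theorem~\ref{introB} to obtain a sequence $\{\Ga_i\} \subset \pka G$ converging to $\bfb$, and use density of $G \cdot \Ga$ in $\pka G$ to replace each $\Ga_i$ by a nearby orbit element $\gamma_i \in G \cdot \Ga$. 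For $i$ sufficiently large, the cone neighborhood of $\gamma_i$ at a suitable integer radius should lie inside $V$ while still containing $\bfb$.

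The hard part will be this last sandwiching step, which is the only place the mere separability argument would fail. What is needed is a compatibility lemma between the quasi-redirection preorder $\preceq$ and the cone-like topology, saying that if a quasi-geodesic ray $\gamma$ quasi-redirects into a representative of $\bfb$ beyond a sufficiently large radius, then cone neighborhoods of $\gamma$ at large enough integer radii are simultaneously contained in a prescribed cone neighborhood of $\bfb$ and still contain $\bfb$. Unpacking the definitions of $\partial G$ in \cite{QR24} should give such a lemma directly from the fact that quasi-redirection is witnessed by $(q,Q)$-quasi-geodesics for some uniform constants, and once this compatibility is in place, the countability of $\calB$ together with Theorems~\ref{thm:minimality of sublinear} and~\ref{introB} deliver second countability of $\partial G$.
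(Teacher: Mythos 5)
Your proposal follows essentially the same route as the paper: fix a sublinearly Morse class $\Ga$ whose $G$-orbit is dense in $\partial G$ (countable since $G$ is finitely generated, combining the minimality of the action on $\pka G$ with density of $\pka G$ in $\partial G$), and take the cone-like neighborhoods $\mathcal{U}(g\Ga,r)$ over countably many radii as the candidate countable basis. The ``sandwiching'' compatibility step you single out as the hard part is exactly what the paper's proof leaves implicit when it simply asserts that the sets $\mathcal{U}(g\Ga,r)$, $r\in\mathbb{Q}^+$, form a basis, so your outline matches the paper's argument (the choice of integer versus rational radii is immaterial).
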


However, the group $G$ does not always act minimally on $\partial_{QR} G$ as discussed in Section~\ref{counter}. There are examples where there exists points in $\partial_{QR} G$ whose orbit is not dense in $\partial_{QR} G$. 

\subsection{Morse elements and visibility} Question 4.4 in \cite{QR24} asks that if $G$ does not have an Morse element, is $P(G)$ a single point. In this paper we answer the question in the affirmative for the setting of finite dimensional \CAT cube complexes. 
\begin{introthm}
If $G$ acts geometrically on a finite dimensional \CAT cube complex and  $|P(G)|\geq2$, then $G$ contains a Morse element.
\end{introthm}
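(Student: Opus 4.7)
The plan is to prove the contrapositive: if $G$ acts geometrically on a finite dimensional \CAT cube complex $X$ and contains no Morse element, then $|P(G)| = 1$. The strategy combines the Caprace--Sageev Rank Rigidity Theorem with a direct quasi-redirecting construction inside a product.

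First, after replacing $X$ by its essential core (on which $G$ still acts geometrically by cocompactness), Rank Rigidity for finite-dimensional \CAT cube complexes yields a dichotomy: either $G$ contains a rank-one isometry, or $X$ decomposes as a nontrivial product $X = X_1 \times X_2$ of unbounded convex subcomplexes stabilized by a finite-index subgroup of $G$. Since rank-one isometries of finite-dimensional \CAT cube complexes are Morse (by Bestvina--Fujiwara, Charney--Sultan), the first alternative is ruled out by hypothesis. So $X$ must split as a product.

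The key geometric content is then to show that when $X = X_1 \times X_2$ with both factors unbounded and cocompactly acted on, any two quasi-geodesic rays $\alpha, \beta$ starting at a common basepoint satisfy $\alpha \simeq \beta$, hence $|P(X)| = 1$. For each radius $r$, construct $\gamma_r$ by following $\alpha$ up to $\alpha(r) = (p_1, p_2)$, then taking an $L^1$-style detour through the product: travel in the $X_1$-direction from $p_1$ to $\beta_1(T)$, then in the $X_2$-direction from $p_2$ to $\beta_2(T)$ for $T$ chosen linearly large in $r$, and finally follow $\beta$ onwards. Because the $L^1$ product metric is bi-Lipschitz equivalent to the \CAT metric, $\gamma_r$ is a $(q,Q)$-quasi-geodesic with $(q,Q)$ independent of $r$. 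Quasi-isometry invariance of the QR boundary then gives $|P(G)| = |P(X)| = 1$, contradicting the hypothesis $|P(G)| \geq 2$.

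The main obstacle is the detour construction, specifically verifying that the quasi-geodesic constants can be chosen uniformly across all radii $r$. Finite dimensionality underwrites the bi-Lipschitz equivalence of the product and \CAT metrics; cocompactness forces the projections of $\beta$ to both factors to grow at controlled linear rates, so that $T \asymp r$ absorbs the additive and multiplicative error; and the unboundedness of both factors guarantees enough ``room'' in either coordinate to execute the switch. Alternatively, one may bypass this direct verification by invoking an existing structural result from \cite{QR24} asserting $|P(X)| = 1$ for product-type cocompact \CAT cube complexes, reducing the argument to Rank Rigidity alone.
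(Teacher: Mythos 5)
Your route is genuinely different from the paper's. The paper argues in the forward direction: given two distinct classes in $P(X)$ with geodesic representatives $\alpha_0,\beta_0$, it analyzes the connecting geodesics $[\alpha_0(i),\beta_0(i)]$; if a subsequence stays in a bounded region, Arzel\`a--Ascoli produces a bi-infinite geodesic which must be rank-one (otherwise the half-plane it bounds lets one redirect each half to the other and forces $\alpha_0\simeq\beta_0$); if the distance to $\go$ grows linearly, the surgery argument of Lemma~\ref{flatredirecting} again forces $\alpha_0\simeq\beta_0$; and in the remaining sublinear case $\beta_0$ is shown to be $\kappa$-Morse, hence rank-one. In every surviving case Caprace--Sageev rank rigidity then supplies a rank-one, hence Morse, element. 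You instead run the contrapositive: no Morse element rules out the rank-one alternative, the essential core splits as a product of two unbounded convex subcomplexes, and everything reduces to the single statement that such a product has one-point QR boundary, after which QI-invariance of $P(\cdot)$ and Milnor--\v{S}varc finish. This is a legitimate and arguably cleaner division of labor; the product (``mono-directional'') statement is exactly the kind of fact developed in \cite{QR24}, where the analysis of the blocks $\mathbb{Z}\times F_2$ in the RAAG example depends on all rays inside a block being equivalent, so the version of your argument that quotes such a result is sound, modulo the routine remarks you already make about the essential core being $G$-cocompact and coarsely dense.

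However, the self-contained detour you describe has a genuine gap, not merely a verification left to the reader. Take $\alpha$ in an $X_1$-slice, $\alpha(t)=(\alpha_1(t),o_2)$, and $\beta$ in an $X_2$-slice, $\beta(t)=(o_1,\beta_2(t))$. Then $\beta_1(T)=o_1$ for every $T$, so your path follows $\alpha$ out to distance $r$ and the ``$X_1$-leg'' returns exactly to the basepoint $(o_1,o_2)=\go$ before the ``$X_2$-leg'' starts. The concatenation then satisfies $d(\gamma_r(0),\gamma_r(s))=0$ at a parameter $s\approx 2r$, forcing the additive quasi-geodesic constant to be at least comparable to $r$; choosing $T$ linearly large in $r$ cannot repair this, since the failure occurs before the $X_2$-leg begins. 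The construction must instead order (or choose) the legs according to the configuration---for instance, first move in the factor in which one can travel away from $\go$ (here: push out in $X_2$ before bringing the $X_1$-coordinate back), or overshoot in one factor and return at large height---or one simply invokes the product statement from \cite{QR24} as in your fallback. Two smaller corrections: the bi-Lipschitz comparability of the $\ell^1$ and $\ell^2$ product metrics is automatic for a two-factor product and is not where finite dimensionality enters (that hypothesis is needed for rank rigidity); and the projections of a quasi-geodesic ray to the factors need not grow linearly (a ray may lie in a single slice), so it is unboundedness of both factors, not cocompactness of the factor actions, that makes the detour possible.
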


As a middle step of establishing this result we also obtain visibility of $P(X)$. Roughly speaking, a set of directions is a visibility space if there is a bi-infinite geodesic line connecting every pair of directions.  The Gromov boundary is a visibility space and this property helps to understand the connection between quasiconformal maps on the boundary and quasi-isometries on the space. Furthermore, the visibility property is not only true for hyperbolic groups. A. Karlsson in \cite{Karlsson} proved that it is true on the Floyd boundary. Visibility also holds for Morse Boundary\cite{CCM19}, sublinearly Morse boundaries \cite{DZ}, as well as the Bowditch boundary for relatively hyperbolic groups and Floyd compacitification.  Visibility finds applications in studying random walks on countable groups \cite{Tiozzo} and connecting the Floyd boundary to the Bowditch boundary \cite{Victor} in relatively hyperbolic settings. It is conceivable that some of these applications can be extended to quasi-redirecting boundary when $\partial X$ exists.

\begin{introcor}
Let $X$ be a proper \CAT metric space with a cocompact action. Then $P(X)$ is a visibility space.
\end{introcor}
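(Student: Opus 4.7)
The plan is to produce, for any two distinct classes $\mathbf{a}, \mathbf{b} \in P(X)$, a bi-infinite \CAT geodesic $\gamma\from\RR\to X$ whose forward ray lies in $\mathbf{a}$ and whose reversed backward ray lies in $\mathbf{b}$. The strategy adapts the limit-of-\CAT-segments construction used for Morse and sublinearly Morse boundaries of \CAT spaces \cite{CCM19, DZ}, with cocompactness playing an essential role for directions that fail to be Morse.

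Fix a basepoint $o \in X$, choose quasi-geodesic representatives $\alpha \in \mathbf{a}$ and $\beta \in \mathbf{b}$ based at $o$, and for each $n \in \NN$ let $\sigma_n$ be the unique \CAT geodesic segment from $a_n := \alpha(n)$ to $b_n := \beta(n)$. Let $p_n \in \sigma_n$ be the unique closest point of $\sigma_n$ to $o$, unique by \CAT convexity, and reparametrize $\sigma_n$ by arclength so that $\sigma_n(0) = p_n$. The main dichotomy is whether the sequence $\{d(o, p_n)\}$ is bounded.

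If $d(o, p_n)$ stays bounded along a subsequence, then properness of $X$ and the Arzel\`a--Ascoli theorem yield a further subsequential limit $\sigma_n \to \gamma$, uniformly on compact subsets of $\RR$, where $\gamma \from \RR \to X$ is a bi-infinite \CAT geodesic with $\gamma(0)$ a bounded distance from $o$. To identify the ends of $\gamma$, note that by \CAT convexity the forward ray $\gamma^+ := \gamma|_{[0,\infty)}$ has the same visual limit in $\partial_\infty X$ as $\{a_n\}$, namely $\alpha(\infty)$, and two quasi-geodesic rays from $o$ with a common visual limit in a proper \CAT space lie at bounded Hausdorff distance. Rays at bounded Hausdorff distance admit $(q, Q)$-quasi-geodesic redirections in either direction with $(q, Q)$ uniform in the redirection radius $r$, so $\gamma^+ \simeq \alpha$ and thus $\gamma^+ \in \mathbf{a}$. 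The symmetric argument gives $\gamma^- \in \mathbf{b}$.

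The main obstacle is the unbounded case $d(o, p_n) \to \infty$. The cleanest path forward is to show this case cannot occur when $\mathbf{a} \neq \mathbf{b}$: the \CAT comparison inequality applied to the triangles $(o, a_n, b_n)$ forces $\alpha$ and $\beta$ to fellow-travel on initial segments of length comparable to $d(o, p_n)$, and converting such unboundedly long fellow-travelling into a pair of uniform $(q, Q)$-quasi-geodesic redirections valid for every $r > 0$ yields $\alpha \simeq \beta$, contradicting $\mathbf{a} \neq \mathbf{b}$. Carrying out this conversion with uniform constants -- using cocompactness essentially, for instance to control the quasi-geodesic constants of the ``bridging'' portions of the redirection as $r$ grows -- is the delicate technical step of the proof.
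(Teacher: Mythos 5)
Your bounded case is essentially the paper's: when the segments $[\alpha(n),\beta(n)]$ meet a fixed ball along a subsequence, Arzel\`a--Ascoli gives the bi-infinite geodesic. (Even there, be careful: your identification of the ends rests on the claim that two quasi-geodesic rays with a common visual limit in a proper \CAT space lie at bounded Hausdorff distance, which is false without a Morse-type hypothesis --- in $\RR^2$ the ray $t\mapsto(t,\sqrt{t})$ and the positive $x$-axis share a visual limit but are at unbounded Hausdorff distance. One has to argue the membership $\gamma(\infty)\in\bfa$ by quasi-redirecting directly, not through the visual boundary.)

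The genuine gap is your treatment of the unbounded case. You claim that $d(\go,p_n)\to\infty$ together with $\bfa\neq\bfb$ is impossible, but the redirection argument you sketch only works when $d\big(\go,[\alpha(n),\beta(n)]\big)$ grows \emph{linearly} in $n$; this is exactly the hypothesis of Lemma~\ref{flatredirecting}. The surgeries there produce quasi-geodesics with uniform constants precisely because the cut-over from $\alpha$ to the segment $[\alpha(n),\beta(n)]$ happens at distance comparable to $n$ from $\go$, so the detour has length comparable to the redirecting radius. If instead $d(\go,p_n)\to\infty$ sublinearly, the detour out to $\beta(n)$ has length on the order of $n$, which is superlinear in the radius at which you redirect, and the concatenations fail to be quasi-geodesics with constants independent of $r$; so you do not get $\alpha\simeq\beta$, and in fact this configuration is not excluded. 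Your appeal to the \CAT comparison inequality does not rescue this: in a Euclidean comparison triangle the apex can be far from the opposite side while the two adjacent sides diverge linearly from the start, so no bounded fellow-travelling of initial segments follows. The paper therefore splits the unbounded case in two: linear growth is ruled out by Lemma~\ref{flatredirecting}, while at most sublinear growth is handled by showing that $\beta_0$ is then sublinearly Morse ($\kappa$-Morse) and invoking the result of \cite{Z} that sublinearly Morse rays can be joined by bi-infinite geodesics to any other direction. That extra ingredient is what your proposal is missing, and without it (or a substitute) the argument does not close.
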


\subsection*{History}  In the classical setting, elements that act on the circle with a single attracting point and a single repelling point are known as having north-south dynamics. These actions play important roles in the dynamics of actions on the circle, see for example Thurston \cite{Thurstoncircle}.  These features persists in the non-hyperbolic group setting and has contributed to the study of Out($F_n$) \cite{caglarclay}, and Thompsons group \cite{CT21}, to name a few. Minimality of group actions is among the basic topological and dynamical properties established for the Gromov boundaries in \cite{gromov}, and the property of minimality has been a key ingredient in proving north-south dynamics \cite{BK02} \cite{liu2021dynamics}, \cite{QZ}.

In non-hyperbolic-group settings where the group has some weaker hyperbolic-like 
properties, the sublinearly Morse boundaries and the quasi-redirecting boundaries are preceded by geometric constructions aiming to generalize the Gromov boundary.
The first of such constructions were created by Charney and Sultan in 2013 \cite{contracting}. Their \emph{contracting boundary} of \CAT spaces was shown to be a first quasi-isometrically invariant geometric boundary in 
non-hyperbolic settings. This construction was generalized to the proper geodesic setting by Cordes' creation of the Morse boundary in \cite{Morse}. Morse boundaries are equipped with a \emph{direct limit topology} and  are invariant under 
quasi-isometries. However, the Morse boundary is frequently not second countable, and in general, sample paths of simple random walks on groups do not converge to points in the Morse boundary.

In comparison, the sublinearly Morse boundary is frequently a topological model (and hence a group invariant topological model) for suitable randoms walks on the associated group. For example,  right-angled Artin groups and mapping class groups can be identified with the Poisson boundary of associated random walks \cite{QRT1,QRT2}. Meanwhile, genericity of a more geometric flavor is also exhibited for sublinearly Morse boundaries. In \cite{GQR22}, genericity of sublinearly Morse directions under Patterson Sullivan measure was shown to hold in the more general context of actions which admit a strongly contracting element. In fact, the results in \cite{GQR22} concerning stationary measures were recently claimed in a different setting by Inhyeok Choi \cite{Choi}, who in place of ergodic theoretic and boundary techniques uses a pivoting technique developed by Gou\"ezel\cite{Gou22}. Also, following  \cite{wenyuan}, genericity of sublinearly Morse directions on the horofunction boundary was recently shown for all proper statistically convex-cocompact actions on proper metric spaces \cite{QY24}.

\subsection{Acknowledgement} The authors would like to thank the Erwin Schr\:odinger International Institute for Mathematics and Physics (ESI) for hosting the 2023 Geometric and Asymptotic Group Theory with Applications (GAGTA) conference, where this paper was first discussed and formed. 

\section{Preliminaries in boundary constructions}
\subsection{Quasi-geodesic metric space and surgeries}
In this section, we establish some basic definitions and notations and recall some 
surgery lemmas between quasi-geodesics. 

\begin{definition}[Quasi-Isometric embedding] \label{Def:Quasi-Isometry} 
Let $(X , d_X)$ and $(Y , d_Y)$ be metric spaces. For constants $q \geq 1$ and
$Q \geq 0$, we say a map $\Phi \from X \to Y$ is a 
$(q, Q)$--\textit{quasi-isometric embedding} if, for all $x_1, x_2 \in X$,
$$
\frac{1}{q} d_X (x_1, x_2) - Q  \leq d_Y \big(\Phi (x_1), \Phi (x_2)\big) 
   \leq q \, d_X (x_1, x_2) + Q.
$$
If, in addition, every point in $Y$ lies in the $Q$--neighbourhood of the image of 
$\Phi$, then $\Phi$ is called a $(q, Q)$--\emph{quasi-isometry}. This is equivalent to saying
that $\Phi$ has a \emph{quasi-inverse}. That is, there exists constants $q', Q'>0$ and a $(q',Q')$--quasi-isometric 
embedding $\Psi \from Y \to X$ such that,
\[
\forall x \in X \quad d_X\big(x, \Psi\Phi(x)\big) \leq Q' \qquad\text{and}\qquad
\forall y \in Y \quad d_Y\big(y, \Phi\,\Psi(y)\big) \leq Q'.
\]
When such a map $\Phi$ exists, we say $(X, d_X)$ and $(Y, d_Y)$ are \textit{quasi-isometric}. 
\end{definition}

\begin{definition}[Quasi-Geodesics] \label{Def:Quadi-Geodesic} 
A \emph{quasi-geodesic} in a metric space $X$ is a quasi-isometric embedding $\alpha \from I \to X$ where $I \subset \mathbb{R}$ is an 
(possibly infinite) interval. That is, $\alpha \from I \to X$ is a $(q,Q)$--quasi-geodesic if, for all $s, t \in I$, we have 
\[
\frac{|t-s|}{q} - Q  \leq d_X \big(\alpha(s), \alpha(t)\big)  \leq q \cdot |s-t| +Q. 
\]
Furthermore, in this paper we always assume $\alpha$ is $(2q+2Q)$--Lipschitz, and hence, $\alpha$ is continuous. By \cite[Lemma 2.3]{QR24} the Lipschitz assumption can be made without loss of generality. 
\end{definition}

\begin{notation} 
To simplify notation, we use $\qq=(q, Q) \in [1, \infty) \times [0, \infty)$ to indicate a pair of  constants.
That is, we say $\Phi \from X \to Y$ is a $\qq$--quasi-isometry, and $\alpha$ is a 
$\qq$-quasi-geodesic. We fix a base point $\go$ in the metric space $X$. By a \emph{$\qq$--ray} 
we mean a $\qq$--quasi-geodesic  $\alpha \from [0, \infty) \to X$ such that $\alpha(0) = \go$. We shall often refer to the image of $\alpha$ simply as $\alpha$, e.g. $x\in\alpha$ as opposed to $x\in im(\alpha)$.

For $r>0$, let $B_r \subset X$ is the open ball of radius $r$ centered at $\go$ and let $B^{c}_{r}= X - B_r$, the complement of $B_r$ in $X$. 
For a $\qq$--ray $\alpha$ and $r>0$, we let $t_r \geq 0 $ denote the first time when $\alpha$ first intersects 
$B^{c}_{r}$. We denote $\alpha(t_r)$ by $\alpha_{r} \in X$.  Also, let 
\[
\alpha|_r =\alpha[0,t_r]
\] 
be the restriction $\alpha$ to the interval $[0, t_r]$, that is, the quasi geodesic segment of $\alpha$ from $\go$ to $\alpha_r$. 
In general, for an interval $I \subset [0, \infty)$, $\alpha|_I$ denotes the restriction of $\alpha$ to $I$. In addition, we use 
$\alpha|_{r, \infty}$ to denote the tail of $\alpha$ from the point when $\alpha$ \emph{last} exit the ball of radius $r$. Given two points $x,y\in\alpha$, we denote the restriction of $\alpha$ between these points as $[x,y]_\alpha$.

We also use $d(\param, \param)$ instead of $d_X(\param, \param)$ when the metric space $X$ is fixed.
For $x \in X$, $\Norm{x}$ denotes $d(\go, x)$. 
\end{notation} 

\subsection*{Assumption 0}\label{assump0} (No dead ends) The metric space $X$ is always assumed to be a proper, geodesic metric space. 
Furthermore, there exist a pair of constants $\qq_0$ such that every point $x \in X$ lies on an infinite $\qq_0$-quasi-geodesic ray.  

Recall that every proper quasi-geodesic metric space is quasi-isometric to a proper geodesic metric space 
(see for example \cite[Proposition 5.3.9]{Clara}). So the first condition 
in the Assumption 0 is not a strong assumption. Furthermore, Cayley graphs of all finitely generated groups satisfies Assumption 0, \cite[Lemma 2.5]{QR24}.

\subsection*{Surgery between quasi-geodesics}
In this section we present several methods to produce a quasi-geodesic as a concatenation 
of other geodesics or quasi-geodesics. The statements are intuitively clear and the proofs are elementary. 
So, this section should probably be skipped on the first reading of the paper. 
First, we recall a few surgery lemmas from \cite{QRT1} and \cite{QRT2}. 

\begin{lemma}\label{surgery}
Let $X$ be a metric space satisfying Assumption 0. 
The following statements are found in \cite[Lemma 2.5]{QRT1} and \cite[Lemma 3.8]{QRT2}. 
 
\begin{itemize}
\item\label{concate}
Consider a point $x \in X$ and a $(q, Q)$--quasi-geodesic segment $\beta$ 
connecting a point $z \in X$ to a point $w \in X$. Let $y$ be a closest point in $\beta$
to $x$. Then 
\[
\gamma = [x, y] \cup [y, z]_\beta
\] 
is a $(3q, Q)$--quasi-geodesic.

\begin{figure}[H]
\begin{tikzpicture}[scale=0.9]
 \tikzstyle{vertex} =[circle,draw,fill=black,thick, inner sep=0pt,minimum size=.5 mm]
[thick, 
    scale=1,
    vertex/.style={circle,draw,fill=black,thick,
                   inner sep=0pt,minimum size= .5 mm},
                  
      trans/.style={thick,->, shorten >=6pt,shorten <=6pt,>=stealth},
   ]

  \node[vertex] (z) at (0,0)[label=left:$z$] {}; 
  \node[vertex] (w) at (8,0) [label=right:$w$]  {}; 
  \node[vertex] (x) at (4,2) [label=right:$x$]  {};     
  \node[vertex] (y) at (4,0) [label=below:$y$]  {};     
  \node (a) at (.9,.8) [label=right:$\beta$]  {};    
  \draw[thick, dashed]  (4, 2)--(4, 0){};
 
  \pgfsetlinewidth{1pt}
  \pgfsetplottension{.75}
  \pgfplothandlercurveto
  \pgfplotstreamstart
  \pgfplotstreampoint{\pgfpoint{0cm}{0cm}}  
  \pgfplotstreampoint{\pgfpoint{1.4cm}{-.6cm}}   
  \pgfplotstreampoint{\pgfpoint{1.3cm}{.2cm}}
  \pgfplotstreampoint{\pgfpoint{3cm}{-.4cm}}
  \pgfplotstreampoint{\pgfpoint{4cm}{0cm}}
  \pgfplotstreampoint{\pgfpoint{5cm}{-.2cm}}
  \pgfplotstreampoint{\pgfpoint{6cm}{.3cm}}
  \pgfplotstreampoint{\pgfpoint{7cm}{-.7cm}}
  \pgfplotstreampoint{\pgfpoint{8cm}{0cm}}
  \pgfplotstreamend
  \pgfusepath{stroke} 
  \end{tikzpicture}
  
\caption{The concatenation of the geodesic segment $[x,y]$ 
and the quasi-geodesic segment $[y,z]_\beta$ is a quasi-geodesic.}
\end{figure}
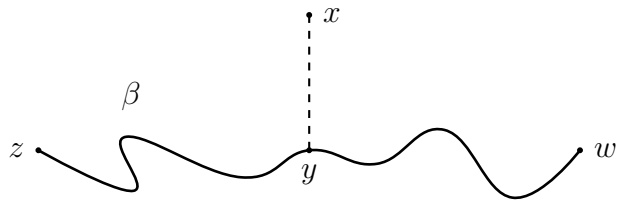

\item\label{redirect11} Let $\beta$ be a geodesic ray and 
$\gamma$ be a $(q, Q)$--ray. 
For $r>0$, assume that $d(\beta_\rr, \gamma)\leq \rr/2$. Then, there exists a
$(9q,Q)$--quasi-geodesic $\gamma'$ where $\gamma'(t) =\beta(t)$ for large values of $t$ and 
\[
\gamma|_{r/2} = \gamma'|_{r/2}. 
\]
\item
Consider a $(\qq, \sQ)$--quasi-geodesic ray  $\alpha \from [0, \infty) \to X$ and a finite 
$(\qq, \sQ)$--quasi-geodesic segment $\beta \from [a,b] \to X$. Then there is 
$s_0 \in [0, \infty)$ such that the following holds: for $ s \in [s_0, \infty)$ let 
$s_\gamma \in [s, \infty)$ and $t_\gamma \in [a,b]$ be such that
$[\beta(t_\gamma), \alpha(s_\gamma)]$ is a geodesic segment that realizes the set distance between 
$\alpha[s, \infty)$ and $\beta$. Then  
\[
\gamma = \beta[a, t_\gamma] \cup [\beta(t_\gamma), \alpha(s_\gamma)] \cup \alpha[s_\gamma, \infty) 
\] 
is a $(4\qq, 3\sQ)$--quasi-geodesic.

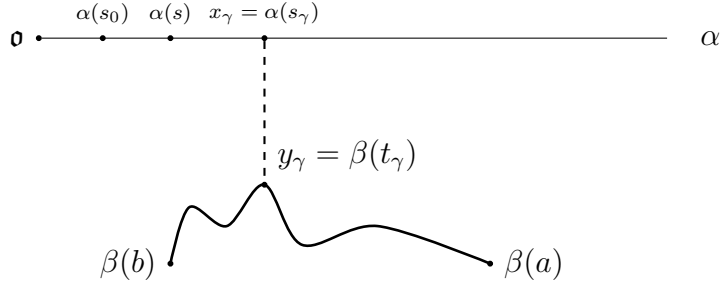
\begin{figure}[H]
\begin{tikzpicture}[scale=0.5]
 \tikzstyle{vertex} =[circle,draw,fill=black,thick, inner sep=0pt,minimum size=.5 mm]
[thick, 
    scale=1,
    vertex/.style={circle,draw,fill=black,thick,
                   inner sep=0pt,minimum size= .5 mm},
                  
      trans/.style={thick,->, shorten >=6pt,shorten <=6pt,>=stealth},
   ]

  \node[vertex] (o) at (-3,5)[label=left:$\go$] {}; 
  \node[vertex]  at (-1.3,5)[label=above:\tiny $\alpha(s_0)$] {}; 
  \node[vertex]  at (.5,5)[label=above:\tiny $\alpha(s)$] {}; 
  \node(a) at (14,5)[label=right:$\alpha$] {}; 
  \draw (o)--(a){};

  \node[vertex](c) at(3, 5)[label=above:\tiny ${x_\gamma=\alpha(s_\gamma)}$]{}; 
  
  \node[vertex](d) at(3, 1.1)[label=above right:${y_\gamma=\beta(t_\gamma)}$] {}; 
  \node[vertex] at (.5, -1) [label=left:$\beta(b)$] {}; 
  \node[vertex] at (9, -1) [label=right:$\beta(a)$] {}; 

  \draw[thick, dashed]  (c) to (d){};
  
  \pgfsetlinewidth{1pt}
  \pgfsetplottension{.55}
  \pgfplothandlercurveto
  \pgfplotstreamstart
  \pgfplotstreampoint{\pgfpoint{0.5cm}{-1cm}}
  \pgfplotstreampoint{\pgfpoint{1cm}{.5cm}}
  \pgfplotstreampoint{\pgfpoint{2cm}{0cm}}
  \pgfplotstreampoint{\pgfpoint{3cm}{1.1cm}}
  \pgfplotstreampoint{\pgfpoint{4cm}{-.5cm}}
  \pgfplotstreampoint{\pgfpoint{6cm}{-0cm}}
  \pgfplotstreampoint{\pgfpoint{9cm}{-1cm}}

  \pgfplotstreamend
  \pgfusepath{stroke} 
\end{tikzpicture}
\caption{Surgery III}
\end{figure}

\end{itemize}
\end{lemma}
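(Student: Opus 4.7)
The plan is to prove all three items by the same general template: for each concatenated path $\gamma$, pick two arbitrary points $p_1, p_2$ on $\gamma$ and verify the two-sided quasi-geodesic inequality by case analysis on which sub-segment contains each $p_i$. The upper bound on $d(p_1, p_2)$ in terms of the parameter length along $\gamma$ is always routine by the triangle inequality and the individual quasi-geodesic constants of each piece. The real content is therefore the lower bound, which requires a geometric constraint at each junction to rule out macroscopic backtracking. Since the Lipschitz assumption on quasi-geodesic rays makes reparametrization harmless, I only track estimates up to bi-Lipschitz change of parameter, keeping the multiplicative and additive constants separate.

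For the first item, consider $\gamma = [x,y] \cup [y,z]_\beta$ and points $p_1, p_2 \in \gamma$. When $p_1, p_2$ lie on the same sub-segment the inequality is immediate. In the mixed case, say $p_1 \in [x,y]$ and $p_2 \in [y,z]_\beta$, the fact that $y$ is a closest point of $\beta$ to $x$ yields $d(x, p_2) \geq d(x, y) = d(x, p_1) + d(p_1, y)$, hence (by a triangle inequality based at $x$) $d(p_1, p_2) \geq d(p_1, y)$. Combined with $d(p_1, p_2) \geq d(y, p_2) - d(p_1, y)$, this gives $d(p_1, p_2) \geq d(y, p_2)/2$. Since the parameter length of $\gamma$ between $p_1$ and $p_2$ is $d(p_1, y) + \ell_\beta(y, p_2) \leq d(p_1, y) + q\,d(y, p_2) + qQ$ by the $(q,Q)$--property of $\beta$, the two lower bounds combine to give the $(3q, Q)$--constants. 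For the second item, pick $u \in \gamma$ with $d(u, \beta_r) \leq r/2$ and set $\gamma' = \gamma|_{[0,u]} \cup [u, \beta_r] \cup \beta|_{[r, \infty)}$. The estimate $d(\go, u) \geq d(\go, \beta_r) - d(u, \beta_r) \geq r/2$ forces $u$ to lie outside $B_{r/2}$, so the first exit of $\gamma$ from $B_{r/2}$ occurs before $u$ and lies on $\gamma'$ as well; hence $\gamma|_{r/2} = \gamma'|_{r/2}$. The critical quasi-geodesic case has $p_1$ on the $\gamma$--piece and $p_2$ on the geodesic tail of $\beta$; there the bridge has length at most $r/2$ while $\gamma$ has already traversed at least $r/2$ to reach $u$, and this balance yields the $(9q, Q)$--constants. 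For the third item, $s_0$ is chosen so that $\alpha[s_0, \infty)$ escapes every bounded neighborhood of $\beta$ (which exists since $\alpha$ is a quasi-geodesic ray in a proper space and $\beta$ is compact), and the distance-realizing pair $(t_\gamma, s_\gamma)$ exists by compactness. The minimality of the bridge $[\beta(t_\gamma), \alpha(s_\gamma)]$ then rules out backtracking: any significantly shorter path between $p_1 \in \beta[a, t_\gamma]$ and $p_2 \in \alpha[s_\gamma, \infty)$ would contradict the realization of the distance between $\alpha[s, \infty)$ and $\beta$.

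The main obstacle I expect is the bookkeeping of constants in the mixed cases of the third item, where three sub-segments meet at two junctions and both the multiplicative factor and the additive constant accumulate at each crossing. The final constants $(4q, 3Q)$ encode exactly this: the multiplicative factor reflects the three-piece concatenation together with the closest-point correction in the style of item (1), while the additive constant triples because each of the three quasi-geodesic segments contributes its own additive error of size $Q$. Given that the results are quoted directly from \cite{QRT1} and \cite{QRT2}, the proof plan above is essentially a reproduction of their arguments, and all estimates reduce to elementary triangle-inequality manipulations once the correct junction inequality is isolated in each case.
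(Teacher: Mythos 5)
Your treatment of item (1) is correct, and your sketch of item (3) is the standard bridge-minimality argument (note the paper gives no proof of this lemma at all, it simply cites \cite[Lemma 2.5]{QRT1} and \cite[Lemma 3.8]{QRT2}, so the only question is whether your sketch is sound; for (3) it is plausible but you still owe the actual conversion of ``no shortcut'' into the parameter estimate and a precise role for $s_0$). The genuine gap is in item (2): the quasi-geodesic you build is not one. You follow $\gamma$ all the way to the point $u$ with $d(u,\beta_r)\le r/2$ before bridging to $\beta_r$ and appending $\beta|_{[r,\infty)}$. The quasi-geodesic bounds only give $t_u\le q(3r/2+Q)$, so the retained piece $\gamma|_{[0,t_u]}$ can have diameter of order $q^2r$ and can pass arbitrarily close to a point $\beta(s)$ of the appended tail with $s$ a large multiple of $r$; such a pair of points of $\gamma'$ is at distance nearly $0$ but at parameter gap comparable to $r$, which is incompatible with $(9q,Q)$ (or with any constants depending only on $\qq$). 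Concretely, in the Euclidean plane let $\beta$ be the positive $x$-axis, and let $\gamma$ go from the origin to $(0,5r)$, across to $(5r,5r)$, down to $(5r,0)$, back along the axis to $(r,0)=\beta_r$, and then straight down forever. This is a $\qq$--ray with constants independent of $r$, it satisfies $d(\beta_r,\gamma)=0\le r/2$, and every admissible choice of $u$ lies on the return leg; hence your $\gamma'$ contains both the corner $(5r,0)$ (on the $\gamma$--piece) and $\beta(5r)=(5r,0)$ (on the tail) at distance $0$ but parameter gap about $8r$, so it fails to be a uniform quasi-geodesic once $r$ is large.

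The balance you invoke (``the bridge has length at most $r/2$ while $\gamma$ has traversed at least $r/2$ to reach $u$'') only controls the junction at $u$; it says nothing about these long-range pairs between the early part of $\gamma|_{[0,t_u]}$ and the far tail of $\beta$, and that is exactly where the construction breaks. The statement itself points to the fix: the conclusion only requires $\gamma'|_{r/2}=\gamma|_{r/2}$, so one should truncate $\gamma$ at its first exit from $B_{r/2}$ rather than at $u$. Then every retained point has norm at most about $r/2$, hence is uniformly separated (by roughly $r/2$) from every point of $\beta|_{[r,\infty)}$, the bridge joins a point of norm $r/2$ to the point $\beta_r$ of norm $r$ (with its length controlled using the hypothesis $d(\beta_r,\gamma)\le r/2$ and the quasi-geodesic bounds on $\gamma$ between radius $r/2$ and $u$), and the case analysis then closes with constants of the form $(9q,Q)$. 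As written, your item (2) asserts that a specific concatenation works, and that assertion is false; this step must be replaced, not merely tightened.
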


\subsection{Sublinearly Morse boundary}
We now introduce the definition of $\kappa$-Morse quasi-geodesic, which will be fundamental for our construction. Recall that a function $\kappa:[0,\infty)\rightarrow[1,\infty)$ is \emph{sublinear} if $\lim_{t\rightarrow\infty}\frac{\kappa(t)}{t}=0.$ As shown in \cite[Remark 3.1]{QRT1}, we may assume that $\kappa$ is monotone increasing and concave.
To set the notation, we say a quantity $D$ \emph{is small compared to} a radius $r >0$ if 
\begin{equation} \label{Eq:Small} 
D \leq \frac{r}{2\kappa(r)}. 
\end{equation} 
 
Given a quasi-geodesic ray $\alpha$ and a constant $m$, we define 
$$\mathcal{N}_{\kappa}(\alpha, m) := \Big\{ x \in X \ : \ d(x, \alpha) \leq m \cdot \kappa(\Vert x \Vert) \Big\}.$$

The following observation will be useful.

\begin{definition} \label{D:k-morse}
Let $Z \subseteq X$ be a closed set, and let $\kappa$ be a sublinear function. 
We say that $Z$ is \emph{$\kappa$-Morse} if there exists a proper function 
$m_Z : \mathbb{R}^2 \to \mathbb{R}$ such that for any sublinear function $\kappa'$ 
and for any $r > 0$, there exists $R$ such that for any $(\qq, \sQ)$-quasi-geodesic ray $\beta$
with $m_Z(\qq, \sQ)$ small compared to $r$, if 
$$d_X(\beta_R, Z) \leq \kappa'(R)
\qquad\text{then}\qquad
\beta|_r \subset \calN_\kappa \big(Z, m_Z(\qq, \sQ)\big).$$
The function $m_Z$ will be called a $\emph{Morse gauge}$ of $Z$. 
\end{definition}

Note that we can always assume without loss of generality that $\max \{ \qq, \sQ \} \leq m_Z(\qq, \sQ)$, and we will assume this in the following.

\begin{figure}[H]
\begin{tikzpicture}[scale=0.7]
 \tikzstyle{vertex} =[circle,draw,fill=black,thick, inner sep=0pt,minimum size=.5 mm]
[thick, 
    scale=1,
    vertex/.style={circle,draw,fill=black,thick,
                   inner sep=0pt,minimum size= .5 mm},
                  
      trans/.style={thick,->, shorten >=6pt,shorten <=6pt,>=stealth},
   ]

  \node[vertex] (o) at (0,0)[label=left:$\go$] {}; 
  \node(a) at (11,0)[label=right:$Z$] {}; 
  
  \draw (o)--(a){};
  \draw [dashed] (0, 1.4) to [bend left = 10] (10,3){};
  \draw [dashed] (0, 0.4) to [bend left = 10] (5,2){};
  
   \draw [decorate,decoration={brace,amplitude=5pt},xshift=0pt,yshift=0pt]
  (5, 2) -- (5,0)  node [thick, black,midway,xshift=0pt,yshift=0pt] {};   
  \node at (7,1){\small $m_{Z}(\qq, \sQ) \cdot \kappa(r)$};
     
 \draw [dashed] (10, 4.5) to (10,0) {};
 \node at (10,0)[label=below:$R$] {};
 \draw [dashed] (5, 4.5) to (5,0){};
 \node at (5,0)[label=below:$r$] {};
        
  \draw [decorate,decoration={brace,amplitude=5pt},xshift=0pt,yshift=0pt]
  (10, 3) -- (10,0)  node [thick, black,midway,xshift=0pt,yshift=0pt] {};       
  \node at (11.2, 1.5) { \small $ \kappa'(R)$};
   \node at (10.7, 3) {\small  $ \beta_{R}$};
    \node at (6, 3.5) {\small  $\beta$};
        
  \pgfsetlinewidth{1pt}
  \pgfsetplottension{.75}
  \pgfplothandlercurveto
  \pgfplotstreamstart
  \pgfplotstreampoint{\pgfpoint{0cm}{0cm}}  
  \pgfplotstreampoint{\pgfpoint{1cm}{-.6cm}}   
  \pgfplotstreampoint{\pgfpoint{2cm}{0.4cm}}
  \pgfplotstreampoint{\pgfpoint{3cm}{0cm}}
  \pgfplotstreampoint{\pgfpoint{4cm}{1.5cm}}
  \pgfplotstreampoint{\pgfpoint{5cm}{1cm}}
  \pgfplotstreampoint{\pgfpoint{6cm}{3.2cm}}
  \pgfplotstreampoint{\pgfpoint{7cm}{2cm}}
  \pgfplotstreampoint{\pgfpoint{8cm}{3.8cm}}
  \pgfplotstreampoint{\pgfpoint{9cm}{2.5cm}}
  \pgfplotstreampoint{\pgfpoint{10cm}{3cm}}
  \pgfplotstreamend
  \pgfusepath{stroke} 
\end{tikzpicture}
\caption{Definition of $\kappa$-Morse set $Z$: Every quasi-geodesic ray $\beta$ has the property that there exists $R(Z, r, \qq, \sQ, \kappa')$, such that if $\beta_{R}$ is distance $\kappa'(R)$ from $Z$, then $\beta|_{r}$ is in the neighborhood $\calN_{\kappa}(Z, m_{Z}(\qq, \sQ))$. }
\label{Fig:Strong1} 
\end{figure}
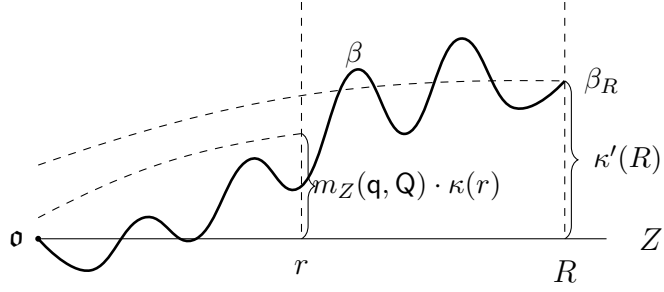

\subsubsection{Boundary definition}

\begin{definition}\label{k-fellow traveling}
Given two quasi-geodesic rays $\alpha$, $\beta$ based at $\go$, we say that $\beta \sim \alpha$ if they sublinearly track each other: 
i.e. if 
$$\lim_{r \to \infty} \frac{d(\alpha_r, \beta_r)}{r} = 0.$$

We denote the equivalence class of $\alpha$ as $\Ga$, and the sublinearly Morse boundary, denoted $\pka X$, is the set of all such equivalence classes.
\end{definition}

By the triangle inequality, $\sim$ is an equivalence relation on the space of quasi-geodesic rays based at $\go$, hence also 
on the space of $\kappa$-Morse quasi-geodesic rays.

\subsubsection{$\kappa$-weakly Morse rays}
As in \cite{QRT1}, we also define a different notion of sublinearly Morse which more closely matches
the usual definition of Morse. 

\begin{definition}\label{defn:weakly-Morse}
Let $Z \subseteq X$ be a closed set, and let $\kappa$ be a concave sublinear function. 
We say $Z$ is \emph{$\kappa$-weakly Morse} if there exists a proper function 
$m_Z : \mathbb{R}^2 \to \mathbb{R}$ such that for any $(\qq, \sQ)$-quasi-geodesic $\gamma \from [s, t] \to X$ with endpoints on $Z$, 
\[
\gamma([s, t]) \subset \calN_\kappa \big(Z, m_Z(\qq, \sQ)\big).
\]
The function $m_Z$ will be called a $\emph{$\kappa$-weakly Morse gauge}$ of $Z$. 
\end{definition}

We note that $\kappa$-weakly Morse and $\kappa$-Morse are equivalent for proper, geodesic metric spaces, see \cite[Remark 3.1]{QRT2}.






\subsubsection{Topology on the sublinearly Morse boundary}

\begin{definition}\label{D:open sets in kappa morse}
    Let $X$ be a proper, geodesic metric space, and let $\kappa$ be a sublinear function. Let $a$ be a (quasi-)geodesic ray representative of $\mathfrak{a}\in\partial_\kappa X$, and let $m_a$ be a Morse gauge for $a$. We define $\mathcal{U}(a,r)$ as follows:
    
    An equivalence class $\Gb\in\partial_\kappa X$ is an element of $\mathcal{U}(a,r)$ if, for any $(q,Q)$-quasi-geodesic $\varphi:[0,\infty)\rightarrow X$ with $\varphi\in\Gb$ and with $m_a(q,Q)$ small compared to $r$, we have
    $$\varphi([0,t_r])\subseteq\mathcal{N}_\kappa(a,m_a(q,Q)).$$
\end{definition}

For a proper, geodesic metric space $X$, the collection of sets $\mathcal{U}(a,r)$ form a neighborhood basis for $\Ga$, and in particular $\Ga\in\mathcal{U}(a,r)$ \cite[Lemma 4.2]{QRT2}.


\subsection{Quasi-redirecting boundary}
Here we collect all facts following the set-up of \cite{QR24}. Please refer to \cite{QR24} for a complete treatment.
\subsubsection{Equivalence classes of rays up to quasi-redirection} 
Recall that, for quasi-geodesic rays $\alpha$ and $\beta$, we say that $\alpha \preceq \beta$ if $\alpha$ can be \emph{quasi-redirected} to $\beta$, that is, if there is a family of quasi-geodesic 
rays with uniform constants that coincide with $\alpha$ in the beginning for an arbitrarily long time 
but eventually coincide with $\beta$. We now formalize this definition. 

\begin{definition}\label{Def:Redirection}
Let $\alpha, \beta$ and $\gamma$ be quasi-geodesic rays. We say $\beta$ \emph{eventually coincide with 
$\gamma$} (and write $\gamma \sim \beta$) if there are times $t_\beta, t_\gamma >0$ such that, 
for $t\geq t_\gamma$, we have 
\[
\gamma(t) =\beta(t+t_\beta).
\]
For $r>0$, we say $\gamma$ \emph{quasi-redirects $\alpha$ to $\beta$ at radius $r$} if
\[
\gamma|_r = \alpha|_r \qquad\text{and} \qquad \beta \sim \gamma. 
\]
If $\gamma$ is a $\qq$--ray, we say \emph{$\alpha$ can be $\qq$--redirected to $\beta$
at radius $r$}. We refer to $t_\gamma$ as the \emph{landing time}. 	
We say $\alpha \preceq \beta$, if there is $\qq \in [1, \infty) \times[0,\infty)$ such that, 
for every $r >0$, $\alpha$ can be $\qq$--redirected to $\beta$ at radius $r$. \end{definition}

\begin{figure}[H]
\centering
\begin{tikzpicture}[scale=0.7] 
\draw[blue, thick] (0,0) -- (9,0) node[below] {$\beta$};
\draw[red, thick] (0,0) -- (0,4) node[left] {$\alpha$};
\draw[|-|, thin] (-0.4,0)--(-0.4,1.8);
\node[left] at (-0.4,0.9) {$r$};
\node at (2,2) {$\gamma$};

 \pgfsetlinewidth{1pt}
 \pgfsetplottension{.75}
 \pgfplothandlercurveto
 \pgfplotstreamstart
 \pgfplotstreampoint{\pgfpoint{0 cm}{0cm}}  
 \pgfplotstreampoint{\pgfpoint{.3 cm}{3 cm}}   
 \pgfplotstreampoint{\pgfpoint{2 cm}{1.4 cm}}
 \pgfplotstreampoint{\pgfpoint{3 cm}{.7 cm}}
 \pgfplotstreampoint{\pgfpoint{4.5 cm}{.3 cm}}
 \pgfplotstreampoint{\pgfpoint{6 cm}{.1 cm}} 
 \pgfplotstreampoint{\pgfpoint{8 cm}{.005 cm}} 
 \pgfplotstreampoint{\pgfpoint{9 cm}{0 cm}} 
 \pgfplotstreamend
 \pgfusepath{stroke} 
 \end{tikzpicture}
\caption{The ray $\alpha$ can be quasi-redirected to $\beta$ at radius $r$.}
\end{figure}
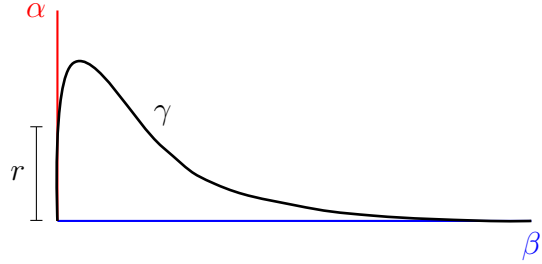

\begin{definition}
Define $\alpha \simeq \beta$ if and only if $\alpha  \preceq \beta$ and $\beta  \preceq \alpha$. Then  
$\simeq$ is an equivalence relation on the space of all quasi-geodesic rays in $X$. Let $P(X)$
denote the set of all equivalence classes of quasi-geodesic rays under $\simeq$. 
For a quasi-geodesic ray $\alpha$, let $[\alpha] \in P(X)$ denote the equivalence class containing 
$\alpha$. We extend $\preceq$ to $P(X)$ by defining $[\alpha] \preceq [\beta]$ if 
$\alpha \preceq \beta$. Note that this does not depend on the representative chosen 
in the given class. The relation $\preceq$ is a partial order on elements of $P(X)$.
\end{definition}

\begin{proposition}\cite[Lemma 2.5, Lemma 3.2, Proposition 3.4]{QR24}
\begin{itemize}
\item Let $G$ be a finitely generated group, then $Cay(G)$ satisfies Assumption 0.
\item Quasi-redirecting is transitive: 
\[ \alpha \preceq \beta \text{ and } \beta \preceq \gamma \Longrightarrow \alpha\preceq \gamma. \] In particular, Let $\alpha, \beta, \gamma$ be  quasi-geodesic rays. If $\alpha$ can be $(q_1, Q_1)$--redirected to 
$\beta$ at every radius $r>0$ and $\beta$ can be $(q_2, Q_2)$--redirected to $\gamma$ at every
radius $r>0$, then $\alpha$ can be $(q_3, Q_3)$--redirects to $\gamma$ at every radius $r>0$ where 
\[
q_3 = \max \big\{ q_2+ 1, q_1 \big\}, \, \text{ and }  \, Q_3= \max \big\{ Q_1, Q_2 \big\}.
\]
 
\item Quasi-isometry of $X$ induces a an automorphism on $P(X)$.

\end{itemize}
\end{proposition}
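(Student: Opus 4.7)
The plan is to address the three bullets in sequence, treating transitivity as the main substantive point.

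For the first bullet, $\Cay(G)$ with a fixed finite generating set is locally finite, hence proper, and is geodesic with unit-length edges. Assuming $G$ is infinite (otherwise there are no infinite quasi-geodesic rays at all), pick any vertex $g$ and a sequence $\{g_n\} \subset G$ with $d(g, g_n) \to \infty$; the geodesic segments $[g, g_n]$ are uniformly $1$-Lipschitz, so Arzel\`a--Ascoli produces a subsequential limit which is a geodesic ray based at $g$. Concatenating with a geodesic from the basepoint $\go$ to $g$ and invoking the first item of \lemref{surgery} yields a $\qq_0$--quasi-geodesic ray through $g$ whose constants are independent of $g$.

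For the transitivity, fix $r > 0$ and let $\gamma_1$ be a $(q_1, Q_1)$--quasi-geodesic witnessing $\alpha$'s redirection to $\beta$ at radius $r$, so there exist $t_1, s_1$ with $\gamma_1(t) = \beta(t + s_1)$ for all $t \ge t_1$. Choose $R > \|\beta(t_1 + s_1)\|$ and apply the hypothesis $\beta \preceq \gamma$ at radius $R$ to obtain a $(q_2, Q_2)$--quasi-geodesic $\gamma_2$ with $\gamma_2|_R = \beta|_R$ and eventually coinciding with $\gamma$. Pick any $s^* \in (t_1 + s_1, t_R)$ where $t_R$ is the (common) exit time of $\beta$ from $B_R$; then $\gamma_1(s^* - s_1) = \beta(s^*) = \gamma_2(s^*)$. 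Splice: set $\gamma_3(t) = \gamma_1(t)$ for $t \le s^* - s_1$ and $\gamma_3(t) = \gamma_2(t + s_1)$ for $t \ge s^* - s_1$. This is continuous, satisfies $\gamma_3|_r = \alpha|_r$, and eventually coincides with $\gamma$. To verify the quasi-geodesic constants: within each piece the $(q_i, Q_i)$ bounds hold directly; for a pair $s < s^* - s_1 < t$ straddling the splice, split at $\gamma_3(s^* - s_1) = \beta(s^*)$ and combine the two one-sided estimates. Elementary bookkeeping yields $q_3 = \max\{q_2 + 1, q_1\}$ and $Q_3 = \max\{Q_1, Q_2\}$.

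For the third bullet, fix a $(q, Q)$--quasi-isometry $\Phi \from X \to Y$ with quasi-inverse $\Psi$. Pushing a $\qq$--ray $\alpha \from [0, \infty) \to X$ forward gives a quasi-geodesic in $Y$; after adjusting via a short geodesic from the basepoint of $Y$ to $\Phi(\go)$ and invoking \lemref{surgery}, we obtain a $\qq'$--ray $\alpha' \from [0, \infty) \to Y$, with $\qq'$ depending only on $\qq, q, Q$. If $\alpha \preceq \beta$ in $X$ via redirecting rays $\gamma$, then $\Phi \circ \gamma$ tracks $\Phi \circ \alpha$ within pointwise distance $Q$ on its initial portion and eventually coincides with $\Phi \circ \beta$. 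To promote this to the exact initial agreement required by \defref{Def:Redirection}, prepend $\alpha'|_{r'}$ for an appropriate $r' = (r - Q)/q$ and join via the surgery concatenation in \lemref{surgery}. This shows the push-forward preserves the order $\preceq$. The quasi-inverse $\Psi$ yields an order-preserving map in the opposite direction, and since $\Psi \circ \Phi$ is uniformly close to the identity, the two composites are the identity on $P(X)$ and $P(Y)$ respectively; we obtain an order-isomorphism $P(X) \to P(Y)$, specialising to an automorphism of $P(X)$ in the case of a self-quasi-isometry.

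The principal obstacle is the splice in the transitivity step: although $\gamma_1$ and $\gamma_2$ literally agree with $\beta$ on the overlap, verifying that the concatenation is itself a quasi-geodesic with the precise stated constants requires carefully tracking cross-piece distance estimates through the splice point, and this is where the $+1$ appearing in $q_3 = \max\{q_2+1, q_1\}$ is introduced. The remaining arguments reduce to standard applications of the surgery lemmas already collected in \lemref{surgery}.
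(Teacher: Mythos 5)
The paper itself offers no proof of this proposition -- it is quoted verbatim from \cite{QR24} -- so your argument has to stand on its own, and its core (splicing the redirecting ray $\gamma_1$ into a ray $\gamma_2$ that redirects $\beta$ to $\gamma$ at a much larger radius, using that both coincide with $\beta$ on an overlap) is indeed the right strategy. The genuine gap is in the transitivity step, precisely at the point you defer to ``elementary bookkeeping.'' With your stated choices -- $R$ only slightly larger than $\Norm{\beta(t_1+s_1)}$ and $s^*$ an \emph{arbitrary} point of $(t_1+s_1,t_R)$ -- the concatenation $\gamma_3$ is in general \emph{not} a $(\max\{q_2+1,q_1\},\max\{Q_1,Q_2\})$--quasi-geodesic. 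For a cross pair $u<t_1\leq s^*-s_1<v$ the only available lower bound is
\[
d\big(\gamma_3(u),\gamma_3(v)\big)\;\geq\;\tfrac{1}{q_2}(v-t_1)-Q_2-q_1(t_1-u)-Q_1,
\]
and to convert this into $\tfrac{1}{q_3}(v-u)-Q_3$ one must absorb a deficit of size roughly $(q_1+1)t_1+Q_1+Q_2$ (note $t_1$ grows with $r$) into the surplus $(\tfrac1{q_2}-\tfrac1{q_3})(v-t_1)$. This forces the splice point, equivalently the radius $R$ at which $\beta$ is redirected to $\gamma$, to be chosen large compared to $t_1$, $s_1$ and the constants; the same largeness is what turns the naive additive error $Q_1+Q_2$ in the upper bound into $\max\{Q_1,Q_2\}$. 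If $s^*$ is taken just above $t_1+s_1$, the claimed constants simply fail, so the quantitative heart of the lemma -- the reason the ``$+1$'' suffices uniformly in $r$ -- is exactly the step your write-up omits and your explicit choices do not support.

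Two smaller points. In the first bullet, Assumption 0 only asks that every point lie on \emph{some} infinite $\qq_0$--quasi-geodesic ray, so the Arzel\`a--Ascoli ray based at $g$ already suffices; your extra concatenation of $[\go,g]$ with that ray is not justified by the first item of \lemref{surgery} (which requires attaching at a closest point) and can fail to be a quasi-geodesic with uniform constants when the ray doubles back toward $\go$. In the third bullet the sketch is plausible but glosses over the fact that $\Phi\circ\gamma$ and the re-based rays in $Y$ only agree up to bounded error at both ends, so surgery is needed on the tail as well as on the initial segment before one can conclude that $\preceq$ is preserved; as written this is an outline rather than a proof.
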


There are two further technical assumptions made to rule out the wild spaces and to ensure that there exists a sensible topology. 

\subsection*{Assumption 1} (Quasi-geodesic representative)\label{Q1}
There is $\qq_0$ (by making it larger, we can assume it is the same at $\qq_0$ in 
Assumption~0) such that every equivalence class $\bfa \in P(X)$ contains a 
$\qq_0$--ray. We fix such a $\qq_0$--ray, denote it by $\alpha_0 \in \bfa$ and refer to it 
as the central element of the class $\bfa$.

\subsection*{Assumption 2} (Uniform redirecting function)\label{Q2}
For every $\bfa \in P(X)$, there is a function 
\[
f_\bfa \from [1, \infty) \times [0, \infty) \to [1, \infty) \times [0, \infty),
\] 
called the redirecting function of
the class $\bfa$, such that if $\bfb \prec \bfa$ then any $\qq$--ray $\beta \in \bfb$ can be
$f_\bfa(\qq)$--redirected to $\alpha_0$. 

Note that the function $f_\bfa$ may depend on the choice of the central element. But such 
functions exist for every quasi-geodesic ray, as we show in the following:

The topology on $X \cup P(X)$ is defined via a system of neighbourhoods. Recall that points
in $P(X)$ are equivalence classes of quasi-geodesic rays. To unify the treatment of point in 
$X$ and $P(X)$, for every $x \in X$, we consider the set of quasi-geodesic rays that pass 
through $x$. Abusing the notation, we denote this set again by $x$, that is 
\[
x = \Big\{ \text{quasi-geodesics rays passing through $x$} \Big\}. 
\]
We use the gothic letters $\Ga, \Gb, \Gc$ to denote elements of $P(X) \cup X$, that is, either a set of quasi-geodesic rays 
passing through a point $x \in X$ or an equivalence class of quasi-geodesic rays in $P(X)$. For $\bfa \in P(X)$, define
$F_\bfa \from [1,\infty) \times [0, \infty) \to [1,\infty) \times [0, \infty)$ by 
\[
F_\bfa(\qq) = {\bold f}_\bfa(\qq) + (1,q), \qquad\text{for}\qquad \qq \in   [1,\infty) \times [0, \infty).
\]

\begin{definition} \label{Def:The-In-Topology}
For $\bfa \in P(X)$ and $r>0$, define   
\[
 \calU(\bfa, r) := \Big  \{  \Gb \in P(X) \cup X \, \ST 
 \text{ every $\qq$--ray in $\Gb$ can be $F_\bfa(\qq)$--redirected to $a_0$ at $r$} \Big \}.   
\]
\end{definition}

In most arguments about a class of quasi-geodesic rays $\bfa$, it is enough to consider $\qq$--rays 
where $\qq$ is not too big. We now make this precise. For $\qq=(q, Q)$ and $\qq'=(q', Q')$ we
say $\qq \leq \qq'$ if $q \leq q'$ and $Q \leq Q'$. 

\begin{lemma} \label{Lem:Max}
For every $r>0$ there is a pair of constants $\qq_{\rm max} \in [1,\infty) \times [0,\infty)$
such that if $\qq \not \leq \qq_{\rm max}$ then, for every $\bfa \in P(X)$, any $\qq$--ray $\beta$ can be 
$F_\bfa(\qq)$--redirected to $a_0$ at radius $r$. 
\end{lemma}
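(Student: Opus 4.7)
The plan is to prove the lemma by constructing an explicit redirection $\gamma$ of $\beta$ to $\alpha_0$ and verifying its quasi-geodesic constants fit within $F_\bfa(\qq) = f_\bfa(\qq) + (1,q)$. Any valid redirection must have constants at least $\qq$ componentwise, so $F_\bfa(\qq) \geq (q+1, Q+q)$, meaning the allowed slack beyond $\qq$ is $(1, q)$. The strategy is to show that once $\qq$ is large enough in at least one coordinate, a direct concatenation construction fits within this slack.

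First I would fix $\alpha_0 \in \bfa$ a $\qq_0$-ray by Assumption 1, let $s_0, t_r$ be the first exit times of $\alpha_0, \beta$ from $B_r$, and put $L = d(\beta_r, (\alpha_0)_{s_0}) \leq 2r$. Define $\gamma$ piecewise: $\gamma(t) = \beta(t)$ on $[0, t_r]$; a unit-speed geodesic from $\beta_r$ to $(\alpha_0)_{s_0}$ on $[t_r, t_r + L]$; $\gamma(t) = \alpha_0(s_0 + t - t_r - L)$ on $[t_r + L, \infty)$. By construction $\gamma|_r = \beta|_r$ and $\gamma$ eventually coincides with $\alpha_0$, so it is a candidate redirection. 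The next step is to verify $\gamma$ is a $(q', Q')$--quasi-geodesic with $(q', Q') \leq F_\bfa(\qq)$ by case analysis on the pieces containing $s$ and $t$. The upper bound is straightforward from piecewise Lipschitz estimates and \lemref{surgery}. The lower bound is binding in the case $s \in [0, t_r]$, $t \geq t_r + L$, where $\|\gamma(s)\| \leq r$ combines with the $\qq_0$--estimate on $\|\gamma(t)\|$ and the choice $q' \geq q_0$ to force $Q' \geq (q(r+Q) + 2r)/q' + O(Q_0 + r)$.

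To fit into $F_\bfa(\qq)$ the argument splits into two regimes determined by which coordinate of $\qq$ violates $\qq \leq \qq_{\rm max}$. If $q > q_{\rm max}$, take $q' = q$: then $Q' = Q + O(r + Q_0)$, which fits because $F_\bfa(\qq)_2 \geq Q + q$ provided $q_{\rm max}$ is chosen to exceed the implicit constant. If instead $Q > Q_{\rm max}$ with $q \leq q_{\rm max}$, the critical move is to take $q' = q + 1$, exploiting the $+1$ slack in $F_\bfa(\qq)_1$: the factor $1/(q+1)$ in place of $1/q$ in the constraint provides just enough reduction so that $Q' = Q$ (or close to it) fits under $Q + q$, provided $Q_{\rm max}$ is chosen of order $(2q_{\rm max}+3)r + (q_{\rm max}+1)Q_0$. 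Setting $\qq_{\rm max}$ accordingly then completes the argument.

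The main obstacle is the non-obvious choice of $q'$ in the large-$Q$ regime. The naive attempt $q' = q$ gives an additive $O(r + Q_0)$ overhead in $Q'$ that $F_\bfa$ cannot absorb when $q$ is bounded; the relaxation $q' = q + 1$ is precisely what unlocks the $+1$ slack in $F_\bfa(\qq)_1$ and reduces the coefficient on the middle-geodesic overhead by the factor $q/(q+1) < 1$. A secondary point is that the cross-piece upper bound contributes an inherent $+Q_0$ term to $Q'$, which the $+q$ slack in $F_\bfa(\qq)_2$ must simultaneously absorb; this is why $q_{\rm max}$ must be chosen larger than a constant depending on $Q_0$.
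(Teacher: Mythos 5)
The paper itself gives no proof of this lemma: it is quoted from the setup of \cite{QR24} (where the function ${\bold f}_\bfa$ entering $F_\bfa$ is actually constructed), so there is nothing in-paper to compare against line by line. Your construction --- follow $\beta$ until its first exit from $B_r$, bridge by a geodesic of length at most $2r$ to a point of $\alpha_0$ near the sphere of radius $r$, then follow $\alpha_0$ --- together with the two-regime bookkeeping (large $q$ versus large $Q$, with the slack $(1,q)$ absorbing the detour of parameter length roughly $q(r+Q)+2r$) is the natural route and is in the spirit of the source.

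There is, however, a genuine gap in the large-$Q$ regime, and it comes from the tail. Beyond time $t_r+L$ your ray coincides with $\alpha_0$, which is only guaranteed to be a $\qq_0$--ray; it may, for example, be parametrized at speed $q_0$, so that for $s,t$ in the tail $d(\gamma(s),\gamma(t))$ is comparable to $q_0|t-s|$, and then $\gamma$ cannot be a $(q',Q')$--quasi-geodesic for any $Q'$ unless $q'\geq q_0$ (both the upper bound and the comparison $|t-s|/q'-Q'\leq |t-s|/q_0-Q_0$ fail for $|t-s|$ large). You correctly note the need for $q'\geq q_0$ in your cross-term estimate, but your large-$Q$ regime is then built around the choice $q'=q+1$, which violates this whenever $q+1<q_0$; in that range the asserted $(q+1,Q')$--redirection simply does not exist for an extremal central element $\alpha_0$. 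Relatedly, your only input on $F_\bfa$ is the inequality $F_\bfa(\qq)\geq(q+1,Q+q)$, justified by the claim that any redirection must have constants at least $\qq$; that inference is not valid (a redirecting ray can have better constants than the nominal constants of $\beta$), and, more importantly, it is insufficient: if ${\bold f}_\bfa(\qq)$ were exactly $\qq$ and $q_0$ were large, the statement would be false, so the lemma genuinely uses that ${\bold f}_\bfa(\qq)$ dominates $\qq_0$ as well as $\qq$, which is part of the \cite{QR24} construction omitted here. The repair is to take $q'=\max(q,q_0)+1\leq F_\bfa(\qq)_1$ (available once that domination is invoked), and also to require $q_{\rm max}\geq q_0$ in the large-$q$ regime; since $q/q'\leq q/(q+1)$, your computation then goes through with $Q_{\rm max}$ of order $(q_{\rm max}+q_0)(Q_0+r)$.
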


\subsection*{A system of neighbourhoods}
For each $\bfa \in P(X)$, define 
\[
\calB(\bfa) = \Big\{ \calV \subset X \cup P(X) \ST 
\calU(\bfa, r) \subset \calV \quad \text{ for some $r>0$ } \Big\}
\]
and for every $x \in X$, define 
\[
\calB(x) = \Big\{ \calV \subset X \cup P(X) \ST 
B(x, r) \subset \calV \quad \text{ for some $r>0$ } \Big\}.
\]

\begin{theorem}\cite[Theorem 5.9]{QR24}
The space $X \cup \partial X$ is a bordification of the space $X$ and $\partial X$ is QI-invariant.
\end{theorem}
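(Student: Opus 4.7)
The plan is first to verify that the collection of sets $\calU(\bfa, r)$ for $\bfa \in P(X), r > 0$, together with the metric balls $B(x, r)$ for $x \in X$, satisfies the axioms of a neighborhood basis, so that $\calB(\bfa)$ and $\calB(x)$ define a topology on $X \cup P(X)$. Then I will check that $X$ embeds as a dense open subspace, giving the bordification claim, and finally that any quasi-isometry $\Phi \colon X \to Y$ extends to a homeomorphism of the two bordifications.

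For the basis axioms, the nontrivial point is compatibility: whenever $\Gc \in \calU(\bfa, r_1) \cap \calU(\bfb, r_2)$, I must produce $s > 0$ with $\calU(\Gc, s) \subset \calU(\bfa, r_1) \cap \calU(\bfb, r_2)$. The case where $\Gc \in X$ reduces to the metric triangle inequality after showing that no element of $P(X)$ ever enters a sufficiently small ball around a point of $X$ (apply \lemref{Lem:Max} to cap the relevant constants). For $\Gc \in P(X)$, the argument combines the transitivity of quasi-redirection (from the quoted Proposition) with \lemref{Lem:Max}: any $\qq$-ray through some $\Gd \in \calU(\Gc, s)$ is $F_\Gc(\qq)$-redirectable to the central element $c_0$ at radius $s$; the tail of $c_0$ beyond radius $s$ is in turn $F_\bfa$-redirectable to $\alpha_0$ once $s$ exceeds the landing time associated to $\Gc \in \calU(\bfa, r_1)$, and concatenating the two redirections yields a redirection of the original $\qq$-ray to $\alpha_0$ at radius $r_1$ with composite constants bounded by $F_\bfa \circ F_\Gc(\qq)$. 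Taking $s$ large enough to handle both $(\bfa, r_1)$ and $(\bfb, r_2)$ simultaneously gives the required nesting; uniformity in $\qq$ is guaranteed by \lemref{Lem:Max}, which reduces the checking to $\qq \leq \qq_{\rm max}$.

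Density and openness of $X$ are then immediate: for any $\bfa \in P(X)$ and any $r > 0$, a point $\alpha_0(t)$ on the central $\qq_0$-ray with $t$ slightly larger than $r$ lies in $\calU(\bfa, r)$, since any quasi-geodesic ray passing through $\alpha_0(t)$ may be concatenated with the tail of $\alpha_0$ beyond $\alpha_0(t)$ to yield a ray eventually identical to $\alpha_0$, i.e.\ a redirection to $\alpha_0$ whose constants are absorbed into $F_\bfa(\qq)$ after \lemref{Lem:Max}. This exhibits each $\bfa \in P(X)$ as a limit of points from $X$, while openness of $X$ and the agreement of the subspace topology with the metric topology follow from the fact that each $B(x, r)$ already belongs to $\calB(x)$.

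For QI-invariance, let $\Phi\colon X \to Y$ be a quasi-isometry with quasi-inverse $\Psi$. Push-forward by $\Phi$ sends $\qq$-rays in $X$ to $\qq'$-quasi-geodesics in $Y$ whose constants depend only on $\qq$ and the QI-constants of $\Phi$, and after a short basepoint adjustment (prepending a geodesic from $\go_Y$ to $\Phi(\go_X)$) defines a map $\Phi_* \colon P(X) \to P(Y)$. Since quasi-redirections in $X$ transport to quasi-redirections in $Y$ with inflated but uniform constants, $\Phi_*$ is order-preserving, and $\Psi_* \circ \Phi_* = \mathrm{id}$ by sublinear tracking, so $\Phi_*$ is a bijection. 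Continuity follows by the same transitivity argument as in step 1: $\Phi_*(\calU_X(\bfa, s)) \subset \calU_Y(\Phi_* \bfa, r)$ once $s$ is large enough relative to $r$ and the QI-constants, and symmetrically for $\Psi_*$. I expect the main obstacle to be step 1 -- the bookkeeping of redirecting constants and landing times -- and the cleanest presentation probably goes through an auxiliary "taming of the tail" lemma stating that any $\qq$-ray agreeing with $c_0$ up to a sufficiently large radius can be $F_\bfa$-redirected to $\alpha_0$ with constants depending only on $\qq$ and $\bfa$.
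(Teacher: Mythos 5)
The paper does not prove this statement at all: it is quoted verbatim from \cite[Theorem 5.9]{QR24}, so there is no internal argument to compare against, and your proposal has to be judged on its own. Its architecture (basis axioms for the $\calU(\bfa,r)$, density of $X$, pushforward of rays under a quasi-isometry) is indeed the shape of the argument in \cite{QR24}, but the central step is not correct as you state it. For the nesting $\calU(\Gc,s)\subset\calU(\bfa,r_1)$ you concatenate a redirection of a $\qq$--ray to $c_0$ (constants $F_\Gc(\qq)$) with a redirection of the result to $\alpha_0$, and you accept ``composite constants bounded by $F_\bfa\circ F_\Gc(\qq)$.'' That does not give membership in $\calU(\bfa,r_1)$: by Definition \ref{Def:The-In-Topology} the ray must be $F_\bfa(\qq)$--redirected, with the constant determined by the \emph{original} $\qq$ alone, and $F_\bfa(F_\Gc(\qq))$ is in general strictly larger than $F_\bfa(\qq)$. \lemref{Lem:Max} does not repair this; it only disposes of rays with $\qq\not\leq\qq_{\rm max}$, and for $\qq\leq\qq_{\rm max}$ the required target $F_\bfa(\qq)$ (e.g.\ for $\qq$ close to $(1,0)$) can be far below the composite. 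Note also that the transitivity statement quoted in the paper produces constants $q_3=\max\{q_2+1,q_1\}$, $Q_3=\max\{Q_1,Q_2\}$ -- a maximum plus a fixed buffer, which is exactly what the extra $(1,q)$ in $F_\bfa(\qq)=\mathbf{f}_\bfa(\qq)+(1,q)$ is designed to absorb -- whereas your bookkeeping composes the redirecting functions. What is genuinely needed, and what constitutes the bulk of the proof in \cite{QR24}, is the uniformization package you only mention as an afterthought: a uniform landing-time statement, the fact that redirecting at a sufficiently large radius $R$ implies $\mathbf{f}_\bfa(\qq)$--redirecting at the smaller radius $r$ with constants depending only on $\qq$ and $\bfa$, and the taming-of-the-tail lemma (this proposition even appears, commented out, in the present paper's source). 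Without it, the ``required nesting'' does not follow.

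Two smaller points. The case $\Gc=x\in X$ is not ``the metric triangle inequality'': a quasi-geodesic ray through a point near $x$ must be re-routed through $x$ by surgery, and one again needs the redirection constants of the re-routed ray to stay within $F_\bfa(\qq)$, which is the same uniformity issue. And in the QI-invariance step, $\Psi_*\circ\Phi_*=\mathrm{id}$ holds because $\Psi\Phi(\alpha)$ lies at uniformly bounded distance from $\alpha$ and bounded-distance rays are mutually redirectable (a surgery argument), not by ``sublinear tracking,'' which is the equivalence used for the $\kappa$--boundary, not for $P(X)$; the continuity of $\Phi_*$ then runs into the same composite-constant problem and is resolved in \cite{QR24} by the same uniform redirecting statements.
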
 

\section{Dynamics on sublinearly Morse boundaries}

In this section, we prove Theorem \ref{thm:minimality of sublinear}, i.e., the $G$ orbit of every element $\Ga\in\pka G$ is dense in $\pka G$. We do this by showing a slightly weaker statement: the $G$ action on $\pka X$ is minimal as long as $G\curvearrowright X$ is cobounded.  In this section, we will assume that $X$ satisfies assumption 0.

\subsection{Minimality of the sublinearly Morse boundary}

We begin by proving a fact on $\partial X$. This fact is very straightforward: for any $\Gb\in\partial X$, there exists $\Ga\in\partial X$ so that $\Ga$ can be translated away from $\go$ along $\Gb$, and vice versa. Let $b\in\Gb\in\partial X$. Recall that  a sequence of group elements $\{g_i\}$ \emph{tracks} $b$ if, for every $T\geq0$, there exists $M\geq0$ so that, for every $i\geq M$, there exists $t\geq T$ with $d(g_i\go,\beta(t))\leq K$.

\begin{lemma}\label{Lem:g_ia leaves ball of radius R} Let $K\geq 0$ be the cobounded constant of $G\curvearrowright X$ and assume $|\partial X| \geq 3$. For any $\Gb \in \partial X $, choose a geodesic ray $b \in \Gb$ and let $\{g_i\}$ be any sequence in $G$ that tracks $b$. 
Then, there exists $\Ga \in \partial X $ such that for any $a \in \Ga $ and $R>0$, there exists $j \in \mathbb{N}$ such that $g_i a\cap B_R(\ob) = \emptyset$ for all $i\geq j$. In addition, for any sequence $\{h_i\}$ that tracks $a$, and for any $R>0$ there exists $j\in\mathbb{N}$ such that $h_ib\cap B_R(\ob)=\emptyset$ for all $i\geq j$. 
    
\end{lemma}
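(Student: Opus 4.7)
The plan is to argue by contradiction, after a strategic choice of $\Ga\in\partial X$ using the hypothesis $|\partial X|\geq 3$. The first observation is a translation: since $g_i$ acts by isometry, $g_i a\cap B_R(\ob)\neq\emptyset$ if and only if $a\cap B_R(g_i^{-1}\ob)\neq\emptyset$. Because $\{g_i\}$ tracks the infinite ray $b$, we have $d(\ob,g_i\ob)\to\infty$, hence $d(\ob,g_i^{-1}\ob)\to\infty$, so any intersection point $a(s_i)$ necessarily satisfies $s_i\to\infty$.

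To pick $\Ga$, I would study the cluster set $C\subseteq\partial X$ of the escaping sequence $\{g_i^{-1}\ob\}$. Here I mean: $\Gd\in C$ if, along some subsequence, the geodesic segments $[\ob,g_{i_k}^{-1}\ob]$ converge via Arzel\`a--Ascoli (using properness) to a geodesic ray in the class $\Gd$. The hypothesis $|\partial X|\geq 3$, combined with the cobounded action of $G$, should ensure $C\cup\{\Gb\}$ does not exhaust $\partial X$; I then select $\Ga\in\partial X\setminus(C\cup\{\Gb\})$.

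With this $\Ga$ fixed, suppose for contradiction that some $a\in\Ga$ and $R>0$ admit infinitely many $i$ with $g_ia\cap B_R(\ob)\neq\emptyset$. Along a subsequence $\{i_k\}$, pick $s_k$ with $d(a(s_k),g_{i_k}^{-1}\ob)\leq R$ so that $s_k\to\infty$. Extract further so that $g_{i_k}^{-1}\ob$ approaches some class $\Ge\in C$; the bounded-distance bound combined with $a(s_k)$ approaching $\Ga$ and the sublinearly Morse property of $a$ forces $\Ge=\Ga$, contradicting $\Ga\notin C$. The second, symmetric statement follows by rerunning the argument with the roles of $(b,\Gb,\{g_i\})$ and $(a,\Ga,\{h_i\})$ swapped.

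The main obstacle I anticipate is controlling $C$: a priori the cluster set could be large, so effectively bounding it (and showing $C\cup\{\Gb\}\neq\partial X$) is the delicate part, and is precisely where the cardinality hypothesis $|\partial X|\geq 3$ and coboundedness must both be invoked. Morally this is the attractor/repeller dichotomy in the spirit of north--south dynamics -- the rest of the paper uses this lemma as a stepping stone toward establishing that dichotomy in full, so here the argument has to be done directly from properness, the surgery techniques of \lemref{surgery}, and the topology on $\partial X$.
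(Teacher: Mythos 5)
Your strategy is genuinely different from the paper's, but it has a gap at exactly the point you flag as "the delicate part," and that gap is not a fillable technicality. Tracking $b$ constrains the orbit points $g_i\ob$, not their inverses: nothing in the hypotheses controls where the sequence $\{g_i^{-1}\ob\}$ escapes, so the cluster set $C$ of limits of the segments $[\ob, g_{i_k}^{-1}\ob]$ can a priori meet every class of $\partial X$, and neither $|\partial X|\geq 3$ nor coboundedness rules out $C\cup\{\Gb\}=\partial X$. Since the existence of $\Ga\notin C\cup\{\Gb\}$ is the foundation of your choice of $\Ga$, the proof does not go through as proposed. Two further problems: your step ``the bounded-distance bound \dots forces $\Ge=\Ga$'' invokes the sublinearly Morse property of $a$, which is not available for a general class of the QR boundary $\partial X$ (and the class you select merely by avoiding $C$ carries no such property); and the claimed symmetry for the second assertion fails, because after swapping roles your argument would only produce \emph{some} class avoided by the cluster set of $\{h_i^{-1}\ob\}$, whereas the statement requires the conclusion for the already-fixed $\Gb$.

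For comparison, the paper sidesteps the cluster set entirely with a pigeonhole between two candidates: using $|\partial X|\geq 3$ it picks two distinct classes $\Ga,\Gc\neq\Gb$ with geodesic representatives $a,c$, and considers closest-point projections $p_i\in\pi_{g_ia}(\ob)$, $q_i\in\pi_{g_ic}(\ob)$. If both $\{\Norm{p_i}\}$ and $\{\Norm{q_i}\}$ had subsequences bounded by $R$, then $d(p_i,q_i)\leq 2R$, and applying $g_i^{-1}$ gives unbounded sequences of points on $a$ and on $c$ at mutual distance at most $2R$, forcing $a\simeq c$ and contradicting $\Ga\neq\Gc$; hence at least one of the two classes already satisfies the first assertion. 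The second assertion is then proved by a separate short argument using coboundedness (orbit points $k_i\ob$ within $K$ of putative intersection points $x_i\in h_ib\cap B_R(\ob)$ yield a bounded sequence tracking $b$, which is absurd), not by rerunning the first argument. If you wish to rescue your limit-ray approach, you would have to prove the non-exhaustion statement about $C$ directly, which is essentially a north--south-dynamics-type fact that this lemma is itself a stepping stone toward, so the argument as written is circular in spirit as well as incomplete.
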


\begin{proof}
Since $(g_i)_i$ tracks $\beta$, $d(g_i\cdot \ob) \rightarrow \infty$ as $i \rightarrow \infty$. Let $\Ga,\Gc \in \partial_\K X$ so that $\Ga\neq \Gc \neq \Gb$. Let $a\in \Ga$ and $c \in \Gc$ be geodesic representatives. For each $i$, let $p_i \in \pi_{g_i a}(\ob)$ and $q_i \in \pi_{g_i c}(\ob)$. For the sake of contradiction, assume that both sequences $\{||p_i||\}$ and $\{||q_i||\}$ have a subsequence bounded above by some $R > 0$. 
By passing to a subsequence, we may assume both sequences $\{||p_i||\}$ and $\{||q_i||\}$ are bounded above by $R$. Note that this implies $d(g_i \ob, p_i) \rightarrow \infty$ and $d(g_i \ob, q_i) \rightarrow \infty $, so we get that $\{g_i^{-1} p_i\}$ and $\{g_i^{-1} q_i\}$ are unbounded sequences. For each $i$, we have $d(p_i, q_i) < 2R$. Thus,  $d(g_i^{-1} p_i, g_i^{-1} q_i) < 2R$. This gives two unbounded sequences $\{g_i^{-1} p_i\}$ and $\{g_i^{-1} q_i\}$ such that $d(g_i^{-1} p_i, g_i^{-1} q_i) < 2R$. This implies $a$ and $c$ fellow travel which gives $a \simeq c$, a contradiction to $\Ga\neq \Gc$. 

Without loss of generality we may assume $\{||p_i||\}$ is unbounded. Now assume, for contradiction, that there exists an infinite sequence $\{x_i\}$ with $x_i\in h_ib\cap B_R(\ob)$ for some $R>0$. Let $k_i\go\in B_K(x_i)$. Then $\{k_i\}$ is a sequence in $G$ that tracks $b$, but $d(k_i\go,\go)\leq K+R$ for all $i$, a contradiction. 
\end{proof}

\begin{remark}\label{remark: statement on QR applies to kappa boundary}
    As discussed in \cite[Section 6]{QR24}, sublinearly Morse elements are minimal in $\partial X$. In particular, one may replace $\partial X$ with $\pka X$ in Lemma \ref{Lem:g_ia leaves ball of radius R}.
\end{remark}

We now introduce a lemma which, given a geodesic representative $a$ of $\Ga$ and  group element $g$, constructs a uniform quality quasi-geodesic from $ga$ to $b$. As in Remark \ref{remark: statement on QR applies to kappa boundary}, the following lemma equally applies to $\pka X$.

\begin{lemma}\label{Lem:(81,2K)-quasi-geo-ray}
     Let $G$ be a group that acts cocompactly on $X$ with cocompact constant $K$ and $|\partial X | \geq 3$. For any $\Gb \in \partial X $, choose a geodesic $b \in \Gb$ and sequence $\{ g_i\}$ that tracks $b$. Let $a\in \Ga$ be as in Lemma \ref{Lem:g_ia leaves ball of radius R}. For any $p_i \in \pi_{g_i\cdot a}(\ob)$, there exists a $(27,3K)$-quasi-geodesic ray that contains $[\ob, p_i]$ whose tail end is $b$. 
\end{lemma}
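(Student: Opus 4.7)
The plan is to exhibit the desired quasi-geodesic ray explicitly as a four-piece concatenation and then control its constants by iterated applications of the surgery results in \lemref{surgery}. Let $w_i \in \pi_b(g_i\ob)$ be a nearest-point projection of $g_i\ob$ onto $b$; by the tracking hypothesis, $d(g_i\ob, w_i) \leq K$. I propose the candidate
\[
\gamma \;:=\; [\ob, p_i] \;\cup\; [p_i, g_i\ob]_{g_i\cdot a} \;\cup\; [g_i\ob, w_i] \;\cup\; b|_{w_i,\infty}.
\]
By construction, $\gamma$ is a continuous ray based at $\ob$, contains the geodesic segment $[\ob, p_i]$, and has tail coinciding with $b$. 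It remains only to verify that the constants can be taken to be $(27, 3K)$.

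First, apply the first bullet of \lemref{surgery} with external point $\ob$, $\beta = g_i\cdot a$, and nearest point $y = p_i$ (which is valid since $p_i \in \pi_{g_i\cdot a}(\ob)$); this shows that $\eta_1 := [\ob, p_i] \cup [p_i, g_i\ob]_{g_i\cdot a}$ is a $(3,0)$-quasi-geodesic segment from $\ob$ to $g_i\ob$. A second application with external point $g_i\ob$, $\beta = b$, and nearest point $y = w_i$ gives that $\eta_2 := [g_i\ob, w_i] \cup b|_{w_i, \infty}$ is a $(3,0)$-quasi-geodesic ray from $g_i\ob$ whose tail is $b$.

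Finally, I would verify that $\gamma = \eta_1 \cup \eta_2$, joined at $g_i\ob$, is a $(27, 3K)$-quasi-geodesic by parameterizing by arclength and performing a case analysis on which of the four pieces the parameters $s \leq t$ lie in. When $s, t$ both lie on $\eta_1$ or both on $\eta_2$, the $(3,0)$-bound from the previous step already yields the quasi-geodesic inequality. The remaining case, $s \in \eta_1$ and $t \in \eta_2$, is handled by combining the triangle inequality through the junction $g_i\ob$ with the $(3,0)$-bounds on each half, together with the fact that for $t$ past the bridge $[g_i\ob, w_i]$, the point $\gamma(t)$ lies on the geodesic $b$ emanating from $\ob$, so $\|\gamma(t)\|$ grows linearly.

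The main obstacle is precisely this cross case: a priori, $\eta_2$ could approach $\eta_1$ far from the junction $g_i\ob$ and produce uncontrolled backtracking. The resolution is that beyond the short bridge $[g_i\ob, w_i]$ of length at most $K$, the ray $\eta_2$ follows the geodesic $b$, which drifts monotonically away from $\ob$ and hence from the bounded segment $\eta_1$. The three multiplicative factors of $3$ in $27 = 3^3$ correspond to the three surgery-style steps (building $\eta_1$, building $\eta_2$, and recombining them), while the three additive copies of $K$ in $3K$ absorb the bridge length at each stage.
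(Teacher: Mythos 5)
Your construction of $\eta_1$ and $\eta_2$ via the first bullet of \lemref{surgery} is fine, and your ray does contain $[\ob,p_i]$ with tail $b$ by fiat; the gap is exactly where you locate ``the main obstacle,'' and your proposed resolution does not close it. The claim that beyond the bridge ``$b$ drifts monotonically away from $\ob$ and hence from the bounded segment $\eta_1$'' is a non sequitur in the generality of this lemma: $X$ is only a proper geodesic space with a cobounded action (no hyperbolicity, no \CAT assumption, and no Morse property of $b$ is invoked here), and $\eta_1$ is not contained in a small ball --- it lives in a ball of radius comparable to $\Norm{g_i\ob}$, since $\Norm{p_i}\leq \Norm{g_i\ob}$ and $d(p_i,g_i\ob)$ can be of order $2\Norm{g_i\ob}$, so the segment $[p_i,g_i\ob]_{g_i\cdot a}$ can wander out to norm roughly $2\Norm{g_i\ob}$. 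Nothing you have assumed prevents a point $x$ on $[\ob,p_i]$ or on $[p_i,g_i\ob]_{g_i\cdot a}$ from lying within bounded distance of some $b_t$ with $t$ of order $2\Norm{g_i\ob}$, i.e.\ far past the junction; in that configuration $d(x,b_t)$ is bounded while the arclength of $\gamma$ between $x$ and $b_t$ is comparable to $\Norm{g_i\ob}$, which grows with $i$, so no constants $(27,3K)$ (indeed no uniform constants at all) can come out of the triangle-inequality argument you sketch. Relatedly, your $27=3^3$, $3K$ bookkeeping is numerology rather than a computation: your construction only involves two $(3,0)$ surgeries plus an unquantified ``recombination.''

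The paper's proof is built precisely to dodge this backtracking problem, and you would need its extra ingredients (or a substitute) to repair your argument. There, one forms $\gamma_i=[\ob,p_i]*[p_i,g_i\ob]_{g_i\cdot a}*[g_i\ob,b_r]$ (a $(3,K)$--quasi-geodesic), chooses $R$ so large that $B_\ob(R)$ contains \emph{all} of $\gamma_i$, lets $q_i=\pi_{\gamma_i}(b_R)$, and replaces the terminal portion of $\gamma_i$ past $q_i$ by the geodesic $[q_i,b_R]$, again via the first bullet of \lemref{surgery}; the tail $b[R,\infty)$ is then attached using the norm estimates $\Norm{q_i}\geq r$, $\Norm{q_i}\leq R=\Norm{b_R}$ and the nearest-point-projection argument of \cite[Lemma 4.3]{QRT1}. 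Because everything before the jump to $b_R$ lies inside $B_\ob(R)$, the tail cannot create the long-detour/short-distance configuration that defeats your naive concatenation. Finally, cutting at $q_i$ may delete part of $[\ob,p_i]$, which is why the paper needs the case analysis: if $q_i\in[\ob,p_i]$ one shows $d(q_i,p_i)\leq K$ and inserts the back-and-forth $[q_i,p_i]*[p_i,q_i]$, and this is the source of the additive constant $3K$ and the multiplicative $27$. If you want to keep a direct concatenation through $g_i\ob$, you must supply an actual lower bound in the cross case, which in this generality you cannot do without some mechanism playing the role of the paper's choice of $R$ and projection surgery.
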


\begin{proof}
    There exists an $r\geq 0$ such that $d(g_i\cdot \ob, b_r) < K$ by definition of the cocompact action. Thus, by Lemma \ref{surgery} $\gamma_i = [\ob, p_i]*[p_i, g_i\cdot \ob]_{g_i \cdot a} * [g_i\cdot \ob, b_r]$ is a $(3,K)$-quasi-geodesic. Let $R>0$ be such that $B_\ob(R)$ contains $\gamma_i$. Denote $q_i = \pi_{\gamma_i}(b_R)$. We have,

    \begin{align*}
        ||q_i|| = d(\ob,q_i) &\geq d(\ob, b_R) - d(b_R, q_i)\\
                    &\geq R - d(b_r, b_R)\\
                    &= R-(R-r)\\
                    &= r.
    \end{align*}        
    Also, we see that $d(\ob, p_i) \leq r+K$ because $d(\ob, g_i\cdot \ob) \leq r+K$ and $p_i$ is a closest point projection. We now break into cases,

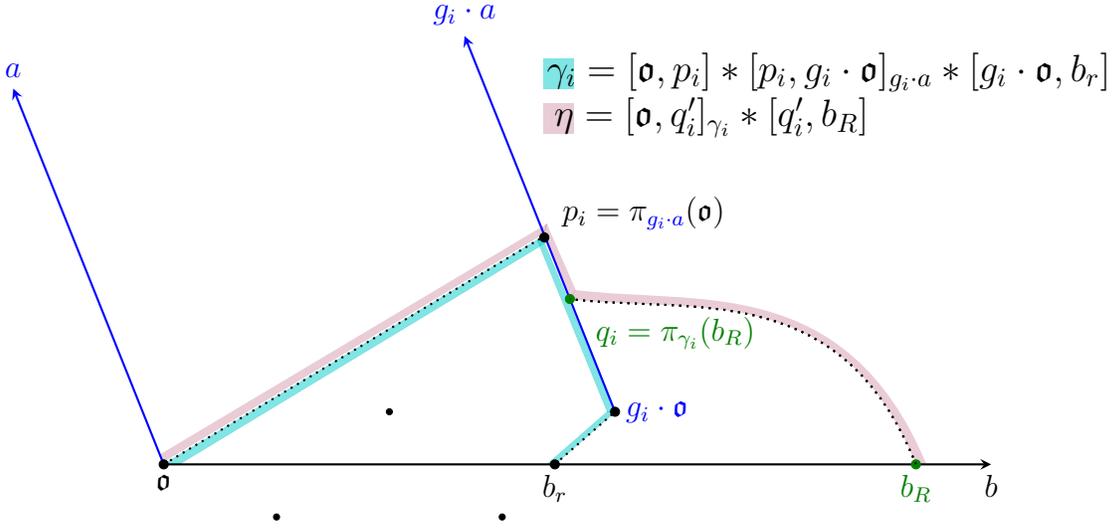
\begin{figure}[h]
    \centering
            \begin{tikzpicture}[scale=1]

\definecolor{aqua}{rgb}{0.0, 1.0, 1.0}

    \draw[black!25!aqua, fill = black!20!aqua, opacity=0.5  ] plot((1.2,-2) -- (1,-2) -- (6.06,1.04) -- (6.11,.97) -- cycle;
    \draw[black!25!aqua, fill = black!20!aqua, opacity=0.5  ] plot((6.06-.03,1.04-.03) -- (6.03-.06,1.01-.03) -- (6.9,-1.3)  -- (7,-1.3) -- cycle;
    \draw[black!25!aqua, fill = black!20!aqua, opacity=0.5  ] plot((6.9,-1.3)  -- (7,-1.3) -- (6.2,-2)  -- (6.1,-2) -- cycle;

    \draw[black!25!purple, fill = black!20!purple, opacity=0.2  ] plot((.96,-2+.13) -- (1,-2+.04) -- (6.06,1.04+.04) -- (6.14-.05,1.17+.02) -- cycle;
    \draw[black!25!purple, fill = black!20!purple, opacity=0.2  ] plot((6.06,1.04+.04) -- (6.14 -.05,1.17+.05-.03) -- (6.5-.03,0.35-.03) -- (6.4,0.2) -- cycle;

    \fill[black!25!purple, fill = black!20!purple, opacity=0.2  ] (11,-2) .. controls (10, .5) and (8,0) .. (6.4,0.2) -- (6.5-.03,0.35-.03) .. controls (8.1-.03, .15-.03) and (10.1-.03,.6-.035) .. (11.17-.03, -2) -- cycle;

\draw [thick, dotted](11,-2) .. controls (10, .5) and (8,0) .. (6.4,0.2);

    \fill[black!25!aqua, fill = black!20!aqua, opacity=0.5] (6.75-.7,3) rectangle (7.15-.7,3.4);
     \node[right,  font=\large] at (6-.05, 3.22) {$ \gamma_i = [\ob, p_i]*[p_i, g_i\cdot \ob]_{g_i \cdot a} * [g_i\cdot \ob, b_r]$};

     \fill[black!25!purple, fill = black!20!purple, opacity=0.2 ] (6.75-.7,3-.6) rectangle (7.15-.7,3.4-.6);
     \node[right,  font=\large] at (6+.05, 3.22-.6) {$ \eta = [\ob, q_i']_{\gamma_i}*[q_i', b_R]$};

    \draw [blue,thick, ->](1,-2) to (-1,3);
    \node[blue, above] at (-1,3) {$a$};

    \draw [blue,thick, ->](7,-1.3) to (5,3.7);
    \node[blue, above] at (5,3.7) {$g_i\cdot a$};
    \node[blue, right] at (7,-1.3) {$g_i\cdot \ob$};

    \draw [thick, ->](1,-2) to (12,-2);
    \node [below] at (12,-2) {$b$};
    \draw[fill] (1,-2) circle [radius=0.06];
    \node[below] at (1,-2) {$\ob$};

     \draw[fill] (2.5,-2.7) circle [radius=0.04];
     \draw[fill] (4,-1.3) circle [radius=0.04];
     \draw[fill] (5.5,-2.7) circle [radius=0.04];
     \draw[fill] (7,-1.3) circle [radius=0.06];

    \draw [thick, dotted](1,-2) to (6,1);
    \draw[fill] (6.06,1.02) circle [radius=0.06];
    \node [right] at (6.16,1.34) {$p_i=\pi_{\color{blue}g_i\cdot a}(\ob)$};

    \draw[fill] (6.2,-2) circle [radius=0.06];
    \node[below] at (6.2,-2) {$b_r$};


    \draw [thick, dotted](6.2,-2) to (7,-1.3);

    \draw[fill, black!50!green] (11,-2) circle [radius=0.06];
    \node[below, black!50!green] at (11,-2) {$b_R$};

    \draw[fill, black!50!green] (6.4,0.2) circle [radius=0.06];
    \node[right, black!50!green] at (6.6,-.25) {$ q_i = \pi_{\gamma_i}(b_R) $};

    \draw [thick, dotted](11,-2) .. controls (10, .5) and (8,0) .. (6.4,0.2);

    \end{tikzpicture}
    \caption{A picture of CASE 1. We have $\eta$ will be a (9,K) quasi-geodesic that contains the geodesic segment $[\ob, p_i]$.}
    \label{fig:Case 1 of building quasi-geo}
\end{figure}
    
    CASE 1: Suppose $q_i \notin [\ob,p_i]$, then choosing $\eta = [\ob, q_i]_{\gamma_i}*[q_i, b_R]$ is a $(9,K)$-quasi-geodesic. Furthermore, as $||q_i|| \leq R = ||b_R||$, we get that for any $x \in b[R,\infty)$, $\pi_\eta(x) = b_R$, via an argument from \cite[Lemma 4.3]{QRT1}. Hence $\eta *b[R,\infty)$ is an $(27,K)$-quasi geodesic that fellow travels $b$. See Figure \ref{fig:Case 1 of building quasi-geo}.

    CASE 2: In the case that $q_i \in [\ob,p_i]_{\gamma_i}$, then $q_i$ is within $K$ of $p_i$. Indeed, since  $||q_i|| \geq r$ and $||p_i|| \leq r + K$, the fact that both $q_i$ and $p_i$ are on the geodesic $[\ob,p_i]_{\gamma_i}$ emanating from $\ob$ implies $d(q_i,p_i)\leq K$. Thus, $$\eta' = [\ob, q_i]_{\gamma_i}*[q_i, p_i]_{\gamma_i}*[p_i, q_i]*[q_i, b_R]$$ is a $(9,3K)$-quasi-geodesic. Similar to CASE 1, we find a $(27,3K)$-quasi-geodesic that fellow travels $b$.
\end{proof}

\begin{corollary}\label{cor:Morse Gauge Control}
    Given the conditions of Lemma \ref{Lem:(81,2K)-quasi-geo-ray}, if $b$ is $\kappa$-Morse, then the $(27,3K)$-quasi-geodesic ray found in Lemma \ref{Lem:(81,2K)-quasi-geo-ray} is $\kappa$-Morse with Morse gauge depending only on $K$ and the Morse gauge of $b$.
\end{corollary}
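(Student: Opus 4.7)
The strategy is to transfer the $\kappa$-Morseness of $b$ to $\alpha$, exploiting that by construction $\alpha = \eta \cup b[R,\infty)$ eventually coincides with $b$ past the landing radius $R$, while the initial segment $\eta$ is a $(27,3K)$-quasi-geodesic with both endpoints $\ob$ and $b_R$ already lying on $b$.

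First I would verify the $\kappa$-Morse condition for $\alpha$ directly. Given a $(q,Q)$-quasi-geodesic ray $\beta$ with $m_\alpha(q,Q)$ small compared to $r$ and satisfying $d(\beta_{R^*},\alpha) \leq \kappa'(R^*)$, I would choose $R^*$ large (depending on $r$, $\kappa'$, and the landing radius $R$) so that $R^* - \kappa'(R^*) > R$. The closest point on $\alpha$ to $\beta_{R^*}$ is then forced to lie on the tail $b[R,\infty) \subseteq b$, giving $d(\beta_{R^*},b) \leq \kappa'(R^*)$. The $\kappa$-Morseness of $b$ then yields $\beta|_r \subseteq \mathcal{N}_\kappa(b, m_b(q,Q))$.

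Next I would convert this neighborhood of $b$ into one of $\alpha$. For each $x \in \beta|_r$ let $b(s_x)$ be a closest point on $b$: if $s_x \geq R$ then $b(s_x) \in \alpha$ and the triangle inequality is immediate. If $s_x < R$, I apply the $\kappa$-weakly Morse property of $b$ to the $(27,3K)$-quasi-geodesic $\eta$ (whose endpoints lie on $b$), obtaining $\eta \subseteq \mathcal{N}_\kappa(b, m_b(27,3K))$. A continuity/intermediate-value argument along the continuous curve $\eta$, using that the closest-point projection onto $b$ can only jump by the $\kappa$-sublinear amount allowed by this neighborhood, shows that every $b(s) \in b[0,R]$ is itself $\kappa$-close to $\eta$ with constant depending only on $m_b(27,3K)$. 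The triangle inequality then produces $d(x,\alpha) \leq m_\alpha(q,Q)\,\kappa(\|x\|)$, where $m_\alpha(q,Q)$ is assembled from $m_b(q,Q)$, $m_b(27,3K)$, and the concavity constant of $\kappa$, and hence depends only on $m_b$ and $K$.

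The main obstacle is precisely this intermediate-value argument yielding the reverse inclusion $b[0,R] \subseteq \mathcal{N}_\kappa(\eta,\cdot)$. Closest-point projection onto $b$ is not generally continuous, but its jumps along the continuous quasi-geodesic $\eta$ are controlled by $m_b(27,3K)\cdot \kappa(\|\cdot\|)$, and concavity of $\kappa$ (which may be assumed by \cite[Remark 3.1]{QRT1}) converts these bounded-gap estimates into a $\kappa$-neighborhood bound at each individual $b(s)$, ensuring that the resulting Morse gauge $m_\alpha$ is uniform in the choice of the index $i$ used to produce $\alpha$.
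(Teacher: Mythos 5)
Your overall strategy is sound and, in effect, reproves by hand the black box the paper invokes: the paper's proof of this corollary is a one-line citation of Lemma~\ref{Lem:(81,2K)-quasi-geo-ray} together with \cite[Corollary 3.5]{QRT2}, which packages precisely this transfer of $\kappa$-Morseness from $b$ to a ray whose tail lies on $b$ and whose initial segment $\eta$ has both endpoints on $b$. Your second half --- converting $\beta|_r\subset\calN_\kappa(b,m_b(\qq,\sQ))$ into a statement about $\alpha$, via the weak Morse property of $b$ applied to $\eta$ (Definition~\ref{defn:weakly-Morse}) and the bounded-jump chain argument giving $b[0,R]\subset\calN_\kappa\bigl(\eta,\,c\,m_b(27,3K)\bigr)$ with $c$ independent of the index $i$ --- is correct, and it is exactly what makes the resulting gauge depend only on $m_b$ and $K$ rather than on $i$ or on the (index-dependent) radius $R$.

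There is, however, a genuine gap in your first step. Definition~\ref{D:k-morse} only guarantees, for the given $r$, $\kappa'$ and $(\qq,\sQ)$, the existence of \emph{some} radius $R_b$ at which the implication ``$d(\beta_{R_b},b)\le\kappa'(R_b)\Rightarrow\beta|_r\subset\calN_\kappa(b,m_b(\qq,\sQ))$'' holds; it does not allow you to invoke that implication at a radius $R^*$ of your own choosing, and knowing the implication at $R_b$ gives nothing at a larger radius $R^*$, since the hypothesis $d(\beta_{R^*},\alpha)\le\kappa'(R^*)$ tells you nothing about $d(\beta_{R_b},b)$. So ``choose $R^*$ large enough that the closest point lands on the tail, then apply the $\kappa$-Morseness of $b$'' is not justified as written. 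The standard repair avoids forcing the closest point onto the tail: since $\eta\subset\calN_\kappa\bigl(b,m_b(27,3K)\bigr)$, any closest point $z\in\alpha$ to $\beta_{R}$ satisfies $d(z,b)\le m_b(27,3K)\,\kappa\bigl(R+\kappa'(R)\bigr)$, so $d(\beta_R,\alpha)\le\kappa'(R)$ forces $d(\beta_R,b)\le\kappa''(R)$ for the sublinear function $\kappa''(t):=\kappa'(t)+m_b(27,3K)\,\kappa\bigl(t+\kappa'(t)\bigr)$; now apply the $\kappa$-Morse property of $b$ with $\kappa''$ in place of $\kappa'$ and let $R^*$ be the radius it provides. (A minor additional point: even in your version the threshold would have to be $R^*-\kappa'(R^*)>\sup_{y\in\eta}\Norm{y}$ rather than $>R$, since $\eta$ need not stay inside $B_R$.) With this patch your argument goes through and yields a gauge of the form $m_\alpha(\qq,\sQ)=m_b(\qq,\sQ)+C\bigl(m_b(27,3K),\kappa\bigr)$, as required.
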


\begin{proof}
    This is immediate from Lemma \ref{Lem:(81,2K)-quasi-geo-ray} and  \cite[Corollary 3.5]{QRT2}.
\end{proof}

\begin{remark}\label{Remark:Symmetry in construction}
    Notice that Lemma \ref{Lem:g_ia leaves ball of radius R} gives symmetric results: translating the basepoint of $a$ along $b$ leaves every ball of radius $R$, and vice-versa. Therefore, by just changing letters in the proof of Lemma \ref{Lem:(81,2K)-quasi-geo-ray}, we can prove that there exists a $(27,3K)$-quasi-geodesic which first projects to an orbit of $b$, then eventually fellow travels $a$. This observation will be important point in proving Theorem \ref{density}.
\end{remark}

To summarize the above lemma, we have found a quasi-geodesic which first nearest-point projects to $g_i a$ and then, eventually, fellow travels $b$. In this next lemma, we find a quasi-geodesic $\lambda_i$ which closest point projects to $g_i a$ and then fellow travels $g_i a$. Corollary \ref{cor:Morse Gauge Control} then gives us control over the Morse gauge for $\lambda_i$ in terms of only $K$ and the Morse gauge of $b$.

\begin{lemma}\label{Lem: Lambda is k-Morse}
      Let $G$ be a group that acts cocompactly on $X$ with cocompact constant $K$ and $|\partial_\K X | \geq 3$. For any $\Gb \in \partial_\K X $, choose a geodesic $b \in \Gb$ and sequence $\{ g_i\}$ that tracks $b$. Let $a\in \Ga$ be as in Lemma \ref{Lem:g_ia leaves ball of radius R}. For any $p_i \in \pi_{g_i\cdot a}(\ob)$, we have $\lambda_i = [\ob,p_i]*[p_i, g_i\cdot a(\infty)]$ is a $(3,0)$-quasi-geodesic that is $\K$-Morse with Morse gauge depending only on $a,b$ and $K$.
\end{lemma}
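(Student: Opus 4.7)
The proof has two parts. First, the $(3, 0)$-quasi-geodesic claim is immediate from the first bullet of \lemref{surgery}: since $a$ is a geodesic and $g_i$ acts by isometries, $g_i \cdot a$ is a $(1, 0)$-quasi-geodesic ray, and by hypothesis $p_i$ is a closest point of $g_i \cdot a$ to $\ob$, so $\lambda_i = [\ob, p_i] \cup [p_i, g_i \cdot a(\infty)]_{g_i \cdot a}$ is a $(3\cdot 1, 0) = (3, 0)$-quasi-geodesic. The surgery lemma is stated for finite segments, but its linear estimate passes immediately to the infinite ray.

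For the $\K$-Morse claim, the plan is to combine Morse control on the initial segment of $\lambda_i$ with Morse control on its tail. For the initial segment $[\ob, p_i]$, the $(27, 3K)$-quasi-geodesic $\mu_i$ constructed in \lemref{Lem:(81,2K)-quasi-geo-ray} contains this segment and is, by \corref{cor:Morse Gauge Control}, $\K$-Morse with gauge depending only on $K$ and the Morse gauge of $b$; hence any probing quasi-geodesic from $\ob$ that comes close to $\lambda_i$ in this initial region is forced into a $\K$-neighborhood of $\mu_i$, and thus of $\lambda_i$. For the tail, where $\lambda_i$ coincides with a sub-ray of $g_i \cdot a$, I will use a symmetric application of \remref{Remark:Symmetry in construction}: just as \lemref{Lem:(81,2K)-quasi-geo-ray} produces a Morse quasi-geodesic fellow travelling $b$ with gauge controlled by $m_b$ and $K$, the symmetric construction produces one fellow travelling $a$ with gauge controlled by $m_a$ and $K$. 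Applying the isometry $g_i$ then translates this to a Morse quasi-geodesic fellow travelling $g_i \cdot a$, and the tracking hypothesis lets us relate it back to the intrinsic base point $\ob$.

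The main obstacle is precisely the base-point sensitivity of the $\K$-Morse condition: the sublinear neighborhood $\calN_\K(Z, m)$ is measured via $\|x\| = d(\ob, x)$, so an isometric translation that moves $\ob$ can a priori worsen the gauge in a way that depends on the translation distance. The resolution leans on \lemref{Lem:g_ia leaves ball of radius R}: since $\{g_i\}$ tracks $b$, the point $g_i \cdot \ob$ lies within $K$ of the $\K$-Morse ray $b$, so the induced adjustment of the Morse gauge is controlled by $m_b$ and $K$, and sublinearity of $\K$ absorbs it into bounded multiplicative and additive constants. Combining this with the Morse estimates from \corref{cor:Morse Gauge Control} on the initial segment yields a Morse gauge for $\lambda_i$ depending only on $a$, $b$, and $K$, uniformly in $i$.
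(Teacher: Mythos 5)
Your proposal follows essentially the same route as the paper's proof: the $(3,0)$ claim comes from surgery I, and the $\kappa$-Morse claim is obtained by splitting any test quasi-geodesic at the projection of $p_i$, controlling the initial piece with the $(27,3K)$-quasi-geodesic $\eta$ of \lemref{Lem:(81,2K)-quasi-geo-ray} together with \corref{cor:Morse Gauge Control} (gauge depending only on $b$ and $K$), and controlling the tail with the (weakly) $\kappa$-Morse property of $a$ transported by $g_i$. The only real divergence is that the paper does not engage with the base-point sensitivity you raise (it simply applies the gauge of $a$ to $g_i\cdot a$); your instinct to address it is reasonable, but the absorption is better attributed to concavity of $\kappa$ and the fact that the relevant tail points lie far from $\ob$ (so the shift by $d(\ob,g_i\ob)$ is comparable to $\Norm{x}$), rather than to $g_i\ob$ lying within $K$ of the $\kappa$-Morse ray $b$.
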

 
\begin{proof}
    See Figure \ref{fig:lambda_i is K-Morse}. Consider any $\xi \in [\lambda_i]$. Let $q_i \in \pi_\xi(p_i)$. Then, $[\ob,q_i]_\xi*[q_i, p_i]$ is a $(3\qq,\sQ)$-quasi-geodesic with endpoints on $[\ob, p_i]$ which is contained in the $\eta$ found in Lemma \ref{Lem:(81,2K)-quasi-geo-ray}. By the weakly $\K$-Morse condition, $$[\ob,q_i]_\xi*[q_i, p_i] \subset \calN_\kappa \big(\eta, m_\eta(3\qq, \sQ)\big),$$ and by Corollary \ref{cor:Morse Gauge Control}, $m_\eta$ depends only on $b$ and $K$. 
    Similarly, $[\ob,q_i]*[q_i,\xi(\infty)]_{\xi}$ is a $(3\qq,\sQ)$-quasi-geodesic that fellow travels $g_i\cdot a$. Thus $$[\ob,q_i]*[q_i, \xi(\infty)]_{\xi} \subset \calN_\kappa \big(a, m_a(3\qq, \sQ)\big).$$ 
    Hence, we conclude $$\xi \subset \calN_\kappa \big(g_i\cdot a, m_\eta(3\qq, \sQ) + m_a(3\qq,\sQ) \big).$$
\end{proof}

\begin{figure}[h]
    \centering
           \begin{tikzpicture}[scale=1]

\definecolor{aqua}{rgb}{0.0, 1.0, 1.0}

    \draw[black!25!purple, fill = black!20!purple, opacity=0.2  ] plot((.96,-2+.13) -- (1+.1,-2+.04) -- (9.27,.52) -- (9.27-.1,.52+.1) -- cycle;
    \draw[black!25!purple, fill = black!20!purple, opacity=0.2  ] plot((9.27,.52) -- (-1+8+1,3+.7) -- (-1+8+1-.1,3+.7-.05) -- (9.27-.1,.52-.05) -- cycle;


     \fill[black!25!purple, fill = black!20!purple, opacity=0.2 ] (7-.2-6-.8,3-.6) rectangle (7.4-.2-6+.15-.8,3.4-.6+.1);
     \node[right,  font=\large] at (6.75+.05-6.07-.8, 3.22-.63) {$ \lambda_i = [\ob, p_i]*[p_i, g_i\cdot a(\infty)]$};

    \draw [blue,thick, ->](1,-2) to (-1,3);
    \node[blue, above] at (-1,3) {$a$};

    \draw [blue,thick, ->](1+8+1,-2+.7) to (-1+8+1,3+.7);
    \node[blue, above] at (-1+8+1,3+.7) {$g_i\cdot a$};
    \node[blue, right] at (1+8+1,-2+.7) {$g_i\cdot \ob$};

    \draw [thick, ->](1,-2) to (12,-2);
    \node [below] at (12,-2) {$b$};
    \draw[fill] (1,-2) circle [radius=0.06];
    \node[below] at (1,-2) {$\ob$};

     \draw[fill] (2.5,-2.7) circle [radius=0.04];
     \draw[fill] (4,-1.3) circle [radius=0.04];
     \draw[fill] (5.5,-2.7) circle [radius=0.04];
     \draw[fill] (7,-1.3) circle [radius=0.04];
     \draw[fill] (8.5,-2.7) circle [radius=0.04];
     \draw[fill] (10,-1.3) circle [radius=0.06];

    \draw [thick, dotted](1,-2) to (9.27,.52);
    \draw[fill] (9.27,.52) circle [radius=0.06];
    \node [right] at (9.37,.62) {$p_i=\pi_{\color{blue}g_i\cdot a}(\ob)$};


 \pgfsetlinewidth{1pt}
  \pgfsetplottension{.75}
  \pgfplothandlercurveto
  \pgfplotstreamstart
  \pgfplotstreampoint{\pgfpoint{1cm}{-2cm}}  
  \pgfplotstreampoint{\pgfpoint{2cm}{0cm}}   
  \pgfplotstreampoint{\pgfpoint{3cm}{-.5cm}}
  \pgfplotstreampoint{\pgfpoint{4cm}{.5cm}}
  \pgfplotstreampoint{\pgfpoint{5cm}{0cm}}
  \pgfplotstreampoint{\pgfpoint{6cm}{1.5cm}}
  \pgfplotstreampoint{\pgfpoint{7.5cm}{1.5cm}}
  \pgfplotstreampoint{\pgfpoint{6.3cm}{3cm}}
  \pgfplotstreampoint{\pgfpoint{7cm}{3.2cm}}
  \pgfplotstreampoint{\pgfpoint{6.7cm}{4cm}}
  \pgfplotstreamend
  \pgfsetarrowsend{>}
  \pgfusepath{stroke} 
  \pgfsetarrowsend{}

  \node [above] at (6.7,4) {$\xi$};


\draw [thick, dotted](7.5,1.5) to (9.27,.52);
\draw[fill] (7.5,1.5) circle [radius=0.06];
\node [below] at (7.5-.6,1.5-.1) {$\pi_{\xi}(p_i) = q_i$};

    \end{tikzpicture}
    \caption{A figure of Lemma \ref{Lem: Lambda is k-Morse}. We subdivide $\xi$ into two parts. The initial segment can be leveraged by using the $\K$-Morseness of $\eta$ in Lemma \ref{Lem:(81,2K)-quasi-geo-ray}. The remaining part of $\xi$ fellow travels $g_i \cdot a$, so we leverage the $\K$-Morseness of $g_i\cdot a$.}
    \label{fig:lambda_i is K-Morse}
\end{figure}
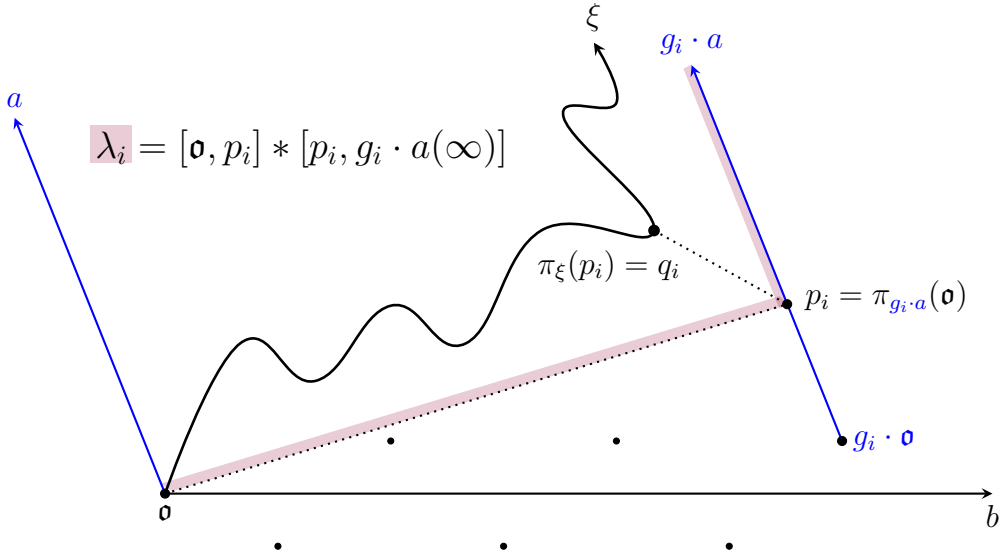

\begin{remark}\label{Rem: Not-dependent on i}
    Notably, the Morse gauge for $\lambda_i$ does not depend on $i$.
\end{remark}

Notice that the construction of each $\lambda_i$ begins by projecting to a point on $g_i a$. Since we are choosing the sequence $g_i$ so that $g_i$ stays close to $b$ and gets farther and farther from $\go$, it is not surprising for us to find that the $\lambda_i$ end up staying sublinearly close to $\beta$, as we show in the next proposition.

\begin{proposition}\label{Prop: Lambda itself is K-close}
    Let $G$ be a group that acts cocompactly on $X$ with cocompact constant $K$ and $|\partial_\K X | \geq 3$. For any $\Gb \in \partial_\K X $, choose a geodesic $b \in \Gb$ and sequence $\{ g_i\}$ that tracks $b$. Let $a\in \Ga$ be as in Lemma \ref{Lem:g_ia leaves ball of radius R}. For  $p_i \in \pi_{g_i\cdot a}(\ob)$, denote $\lambda_i = [\ob,p_i]*[p_i, g_i\cdot a(\infty)]$. For any $r>0$ there exists an $i$ such that $$\lambda_i|_r \subset  \calN_\kappa \big(b, m_b(9, 0) \big).$$
\end{proposition}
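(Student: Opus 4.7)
My strategy is to embed the initial geodesic segment $[\ob, p_i]$ of $\lambda_i$ inside a $(9, 0)$-quasi-geodesic whose two endpoints both lie on $b$, and then invoke the $\kappa$-weakly Morse property of $b$ (Definition~\ref{defn:weakly-Morse}). The reduction to $[\ob, p_i]$ is immediate from Lemma~\ref{Lem:g_ia leaves ball of radius R}: for $i$ sufficiently large the ray $g_i a$ avoids $B_r(\ob)$, in particular $\Vert p_i \Vert \geq r$, so $\lambda_i|_r \subseteq [\ob, p_i]$. Using that $\{g_i\}$ tracks $b$, I pass to a subsequence and pick parameters $t_i \to \infty$ with $d(g_i\cdot\ob, b(t_i)) \leq K$.

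Now I apply the first bullet of Lemma~\ref{surgery} twice. The first application, to the geodesic ray $g_i a$ with exterior point $\ob$, shows that $\beta_i := [\ob, p_i] \cup [p_i, g_i\cdot\ob]_{g_i a}$ is a $(3, 0)$-quasi-geodesic from $\ob$ to $g_i\cdot\ob$. The second application, to $\beta_i$ with exterior point $b(t_i)$ (whose nearest point on $\beta_i$ I denote by $y_i$, noting $d(b(t_i), y_i) \leq d(b(t_i), g_i\cdot\ob) \leq K$), produces a $(9, 0)$-quasi-geodesic $\sigma_i := [b(t_i), y_i] \cup [y_i, \ob]_{\beta_i}$ whose two endpoints $\ob$ and $b(t_i)$ both lie on $b$.

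A short case analysis shows that $\lambda_i|_r \subset \sigma_i$ for $i$ large. If $y_i$ lies on the $g_i a$ portion of $\beta_i$, then $[y_i, \ob]_{\beta_i}$ already traces back through the entire segment $[\ob, p_i]$. If instead $y_i \in [\ob, p_i]$, then the estimate $\Vert y_i \Vert \geq t_i - K \to \infty$ forces $\Vert y_i \Vert > r$ for large $i$, so $[y_i, \ob]_{\beta_i}$ still contains $[\ob, p_i]|_r \supseteq \lambda_i|_r$. Applying the $\kappa$-weakly Morse property of $b$ to the $(9, 0)$-quasi-geodesic $\sigma_i$ with endpoints on $b$ then yields $\sigma_i \subset \calN_\kappa(b, m_b(9, 0))$, and hence $\lambda_i|_r \subset \calN_\kappa(b, m_b(9, 0))$. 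The main obstacle is securing the explicit constants $(9, 0)$ rather than the $(3, K)$ that a single direct concatenation $[\ob, p_i] \cup [p_i, g_i\cdot\ob]_{g_i a} \cup [g_i\cdot\ob, b(t_i)]$ would give; this is precisely what forces the two-stage nearest-point surgery described above.
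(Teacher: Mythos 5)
Your argument is correct, but it takes a genuinely different route from the paper. The paper's proof stays in the ``ray'' formulation of Definition~\ref{D:k-morse}: it sets $\kappa'=m_b(9,0)\kappa$, takes the radius $R=R(3,0,r,\kappa')$ furnished by the $\kappa$-Morse property of $b$, chooses $i$ so that $\Norm{p_i}\geq R$, and then uses Lemma~\ref{Lem:(81,2K)-quasi-geo-ray} (via Corollary~\ref{cor:Morse Gauge Control}) to see that $\lambda_i(R)$ is within $\kappa'(R)$ of $b$, so the definition of $\kappa$-Morse applied to the $(3,0)$-quasi-geodesic ray $\lambda_i$ gives $\lambda_i|_r\subset\calN_\kappa(b,m_b(9,0))$. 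You instead bypass Lemma~\ref{Lem:(81,2K)-quasi-geo-ray} and the ``choose $R$'' step entirely: two applications of the nearest-point surgery in Lemma~\ref{surgery} produce the $(9,0)$-quasi-geodesic $\sigma_i=[b(t_i),y_i]\cup[y_i,\ob]_{\beta_i}$ with both endpoints on $b$, your case analysis (using $\Norm{y_i}\geq t_i-K$ and $\Norm{p_i}\geq r$ from Lemma~\ref{Lem:g_ia leaves ball of radius R}) shows $\lambda_i|_r\subset\sigma_i$ for $i$ large, and the $\kappa$-weakly Morse property of $b$ (Definition~\ref{defn:weakly-Morse}) finishes. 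What your route buys is a cleaner constant bookkeeping, landing exactly on the gauge $m_b(9,0)$ with no additive constant and no appeal to the earlier $(27,3K)$ construction; what it costs is that $m_b$ in your conclusion is the \emph{weakly} Morse gauge of $b$ rather than the gauge of Definition~\ref{D:k-morse} — harmless here, since the two notions are equivalent in proper geodesic spaces and the paper itself conflates the gauges in this way (e.g.\ in Lemma~\ref{Lem: Lambda is k-Morse}). Two small points to make explicit if you write this up: apply the first surgery to a sufficiently long initial segment of $g_i\cdot a$ containing $p_i$ (the lemma is stated for segments, not rays), and note that the tracking definition lets you choose $t_i\to\infty$ with $d(g_i\ob,b(t_i))\leq K$, which is all your Case~2 estimate needs.
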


\begin{proof}
    Set $\K' = m_b(9,0)\K$. Since $b$ is $\K$-Morse, there exists an $R = R(3,0,r,\K')$ such that the conditions of the $\K$-Morse property holds. By Lemma \ref{Lem:g_ia leaves ball of radius R}, There exists an $i$ such that for $\lambda_i = [\ob,p_i]*[p_i, g_i\cdot a(\infty)]$, we have $||p_i|| \geq R$. By Lemma \ref{Lem:(81,2K)-quasi-geo-ray}, $d(\lambda_i(R), b) \leq m_b(9,0)\K(\lambda_i(R)) = m_b(9,0)\K(R).$ Hence, as $b$ is $\K$-Morse, $$\lambda_i|_r \subset  \calN_\kappa \big(b, m_b(9, 0) \big). $$
\end{proof}

Notice that, referring to Definition \ref{D:open sets in kappa morse}, we have just shown that for every $r>0$, there exists $i$ so that $\lambda_i\in\mathcal{U}(\beta,r)$. However, in order to satisfy the full conditions of Definition \ref{D:open sets in kappa morse}, we need to show that the entire equivalence class of $\lambda_i$ is contained in $\mathcal{U}(\beta,r)$. 
This fact is straightforward: If $\xi$ is in the same equivalence class as $\lambda_i$, then $\xi$ and $\lambda_i$ sublinearly fellow travel in the sense of Definition \ref{k-fellow traveling}. Since $\lambda_i$ sublinearly follows $\beta$ up to distance $r$, and $\xi$ sublinearly follows $\lambda_i$ for all time, $\xi$ must also sublinearly travel $\beta$ up to some distance $r'$. We formalize this argument in the next proposition.

\begin{proposition}\label{Prop: lambda_i is in the open set}
    Let $G$ be a group that acts cocompactly on $X$ with cocompact constant $K$ and $|\partial_\K X | \geq 3$. For any $\Gb \in \partial_\K X $, choose a geodesic $b \in \Gb$ and sequence $\{ g_i\}$ that tracks $b$. Let $a\in \Ga$ be as in Lemma \ref{Lem:g_ia leaves ball of radius R}. For  $p_i \in \pi_{g_i\cdot a}(\ob)$, denote $\lambda_i = [\ob,p_i]*[p_i, g_i\cdot a(\infty)]$. For any $r>0$, there exists an $i$ such that any $(\qq,\sQ)$-quasi-geodesic $\xi$ such that $\xi \sim \lambda_i$ has $\xi|_r \subset \calN_\kappa \big(b, m_b(\qq, \sQ) \big). $
\end{proposition}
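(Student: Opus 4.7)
\emph{Proof plan.} The plan is to apply the $\kappa$-Morse property of $b$ (Definition~\ref{D:k-morse}) directly to $\xi$, using the sublinear fellow-traveling $\xi \sim \lambda_i$ combined with Proposition~\ref{Prop: Lambda itself is K-close} to certify the hypothesis that $\xi_R$ is sublinearly close to $b$ at some large radius $R$.

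First I would unpack $\xi \sim \lambda_i$: by Definition~\ref{k-fellow traveling} there exists a sublinear function $\kappa_\xi$ with $d(\xi_s, \lambda_{i,s}) \leq \kappa_\xi(s)$ for all $s$. I set $\kappa' := \kappa_\xi + m_b(9,0)\,\kappa$, which is again sublinear, and apply Definition~\ref{D:k-morse} to $b$ with parameters $(\qq, \sQ)$, $r$, and $\kappa'$ to obtain the associated radius $R$. Next, using Proposition~\ref{Prop: Lambda itself is K-close} with radius $R$ in place of $r$, I would choose $i$ large enough that $\lambda_i|_R \subset \calN_\kappa(b, m_b(9,0))$; in particular $d(\lambda_{i,R}, b) \leq m_b(9,0)\,\kappa(R)$. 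The triangle inequality then yields
\[
d(\xi_R, b) \;\leq\; d(\xi_R, \lambda_{i,R}) + d(\lambda_{i,R}, b) \;\leq\; \kappa_\xi(R) + m_b(9,0)\,\kappa(R) \;=\; \kappa'(R),
\]
which triggers the $\kappa$-Morse conclusion $\xi|_r \subset \calN_\kappa(b, m_b(\qq, \sQ))$, as desired.

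The main obstacle is that the proposition demands a single $i$ depending only on $r$, whereas $R$ above depends on the quality $(\qq, \sQ)$ and, more subtly, on the individual $\xi$ through $\kappa_\xi$. I would resolve the quality dependence by invoking the implicit restriction of Definition~\ref{D:open sets in kappa morse} that $m_b(\qq, \sQ)$ be small compared to $r$, which bounds the admissible pairs $(\qq, \sQ)$ and hence bounds the associated radii $R$; since increasing $i$ monotonically extends the range on which $\lambda_i$ stays $m_b(9,0)\,\kappa$-close to $b$, a single sufficiently large $i$ serves all admissible qualities at once. The residual per-$\xi$ dependence through $\kappa_\xi$ is the most delicate point; I would handle it either by selecting a fixed universal sublinear $\kappa'$ that dominates the rates arising from all $\xi \sim \lambda_i$ simultaneously (exploiting that the fellow-traveling is controlled by a common $\lambda_i$), or by passing to a larger radius $R' \ge R$ at which the hypothesis of Definition~\ref{D:k-morse} is verified for that particular $\xi$, then applying the Morse property at $R'$ in place of $R$.
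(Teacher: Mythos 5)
Your strategy is viable but genuinely different from the paper's. The paper never applies Definition~\ref{D:k-morse} for $b$ directly to $\xi$: it first places $\xi|_r$ in $\calN_\kappa\big(\lambda_i, m_{\lambda}(\qq,\sQ)\big)$ using the $\kappa$-Morseness of $\lambda_i$ (Lemma~\ref{Lem: Lambda is k-Morse}, with gauge independent of $i$ by Remark~\ref{Rem: Not-dependent on i}), then pushes each point of $\xi|_r$ to $b$ through its projection to $\lambda_i$ (choosing $R>2r$ so these projections land on the part of $\lambda_i$ controlled by Proposition~\ref{Prop: Lambda itself is K-close}), obtaining the gauge $2m_b(9,0)+m_{\lambda}(\qq,\sQ)$, and only at the end trades this for $m_b(\qq,\sQ)$ by invoking \cite[Lemma 4.2]{QRT2}. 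Your route --- certify $d(\xi_R,b)\le \kappa'(R)$ at a single large radius and let the $\kappa$-Morse property of $b$ applied to $\xi$ do the work --- outputs the gauge $m_b(\qq,\sQ)$ immediately and avoids that final conversion step, which is a real simplification.

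Two caveats. First, Definition~\ref{D:k-morse} only yields a conclusion for quasi-geodesics with $m_b(\qq,\sQ)$ small compared to $r$, so your argument proves the proposition only for such quality pairs, whereas the statement (and the paper's projection argument) carries no such restriction; this weakening is harmless downstream, since Definition~\ref{D:open sets in kappa morse} and Corollary~\ref{Cor:convergence in k-morse bondary} require exactly the restricted case, but it should be stated explicitly. Second, of your two fixes for the circularity ($\kappa_\xi$ depends on $\xi$, which is only defined after $i$ is chosen), only the first is sound, and it should be made precise: the uniform sublinear control on members of $[\lambda_i]$ is exactly Lemma~\ref{Lem: Lambda is k-Morse} plus Remark~\ref{Rem: Not-dependent on i}, giving $d(\xi_s,\lambda_i)\le m_{\lambda}(\qq,\sQ)\,\kappa(s)$ with $m_{\lambda}$ independent of $i$, and properness of $m_b$ bounds the admissible $(\qq,\sQ)$, so one fixed $\kappa'$, hence one $R$ and one $i$, serves all admissible $\xi$ at once. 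Note this bounds the distance from $\xi_R$ to the \emph{set} $\lambda_i$, not to the same-radius point $\lambda_{i,R}$, so the triangle inequality should be run through the nearest point of $\lambda_i$ (whose norm differs from $R$ only sublinearly) rather than through $\lambda_{i,R}$. Your alternative fix --- applying the Morse property ``at $R'$ in place of $R$'' --- is not licensed by Definition~\ref{D:k-morse} as written, since the definition supplies one specific radius $R$ for each pair $(\kappa',r)$ and asserts nothing for larger radii.
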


\begin{proof}
    Choose $R>0$ to be sufficiently large and $i$ such that \[\lambda_i|_R \subset  \calN_\kappa \big(b, m_b(9, 0) \big). \] Specifically, we can choose $R$ to be larger than the $R(3,0,r,\kappa')$ in Proposition \ref{Prop: Lambda itself is K-close}  and also larger than $2r$. Pick any $(\qq, \sQ)$-quasi-geodesic $\xi$ such that $\xi \sim \lambda_i$. By being in the same equivalence class, \[\xi|_r \subseteq \calN_\kappa \big(\lambda_i, m_{\lambda_i}(\qq,\sQ) \big).\] 
    
    Since $m_{\lambda_i}$ is independent of $i$ by Remark~\ref{Rem: Not-dependent on i}, we denote $m_{\lambda_i}$ by $m_{\lambda}$. For any $x \in [\ob, \xi_r]_\xi$, by \cite[Lemma 2.2]{QRT2},
    \[||\pi_{\lambda_i}(x)|| \leq 2 ||x|| \leq 2r .\]
    
    Since $R>2r$, $$d\bigg(\pi_b\big(\pi_{\lambda_i}(x)\big), \pi_{\lambda_i}(x)\bigg) \leq m_b(9,0)\K(\pi_{\lambda_i}(x)) \leq 2m_b(9,0)\K(x). $$ That is, for any $r>0$, we can find an $i$ such that all $\xi \sim \lambda_i$ have $$\xi|_r \subset \calN_\kappa \big(b, 2m_b(9, 0)+m_\lambda(\qq, \sQ) \big). $$

    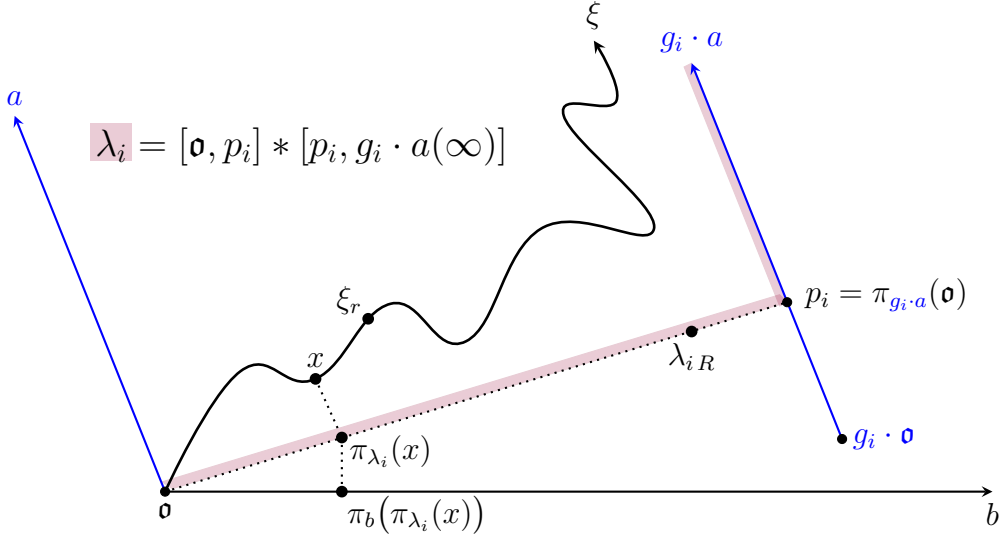
\begin{figure}[H]
        \centering
             \begin{tikzpicture}[scale=1]

\definecolor{aqua}{rgb}{0.0, 1.0, 1.0}

    \draw[black!25!purple, fill = black!20!purple, opacity=0.2  ] plot((.96,-2+.13) -- (1+.1,-2+.04) -- (9.27,.52) -- (9.27-.1,.52+.1) -- cycle;
    \draw[black!25!purple, fill = black!20!purple, opacity=0.2  ] plot((9.27,.52) -- (-1+8+1,3+.7) -- (-1+8+1-.1,3+.7-.05) -- (9.27-.1,.52-.05) -- cycle;


     \fill[black!25!purple, fill = black!20!purple, opacity=0.2 ] (7-.2-6-.8,3-.6) rectangle (7.4-.2-6+.15-.8,3.4-.6+.1);
     \node[right,  font=\large] at (6.75+.05-6.07-.8, 3.22-.63) {$ \lambda_i = [\ob, p_i]*[p_i, g_i\cdot a(\infty)]$};

    \draw [blue,thick, ->](1,-2) to (-1,3);
    \node[blue, above] at (-1,3) {$a$};

    \draw [blue,thick, ->](1+8+1,-2+.7) to (-1+8+1,3+.7);
    \node[blue, above] at (-1+8+1,3+.7) {$g_i\cdot a$};
    \node[blue, right] at (1+8+1,-2+.7) {$g_i\cdot \ob$};

    \draw [thick, ->](1,-2) to (12,-2);
    \node [below] at (12,-2) {$b$};
    \draw[fill] (1,-2) circle [radius=0.06];
    \node[below] at (1,-2) {$\ob$};

     \draw[fill] (10,-1.3) circle [radius=0.06];

    \draw [thick, dotted](1,-2) to (9.27,.52);
    \draw[fill] (9.27,.52) circle [radius=0.06];
    \node [right] at (9.37,.62) {$p_i=\pi_{\color{blue}g_i\cdot a}(\ob)$};


 \pgfsetlinewidth{1pt}
  \pgfsetplottension{.75}
  \pgfplothandlercurveto
  \pgfplotstreamstart
  \pgfplotstreampoint{\pgfpoint{1cm}{-2cm}}  
  \pgfplotstreampoint{\pgfpoint{2cm}{-.4cm}}   
  \pgfplotstreampoint{\pgfpoint{3cm}{-.5cm}}
  \pgfplotstreampoint{\pgfpoint{4cm}{.5cm}}
  \pgfplotstreampoint{\pgfpoint{5cm}{0cm}}
  \pgfplotstreampoint{\pgfpoint{6cm}{1.5cm}}
  \pgfplotstreampoint{\pgfpoint{7.5cm}{1.5cm}}
  \pgfplotstreampoint{\pgfpoint{6.3cm}{3cm}}
  \pgfplotstreampoint{\pgfpoint{7cm}{3.2cm}}
  \pgfplotstreampoint{\pgfpoint{6.7cm}{4cm}}
  \pgfplotstreamend
  \pgfsetarrowsend{>}
  \pgfusepath{stroke} 
  \pgfsetarrowsend{}
  \node [above] at (6.7,4) {$\xi$};


\draw[fill] (3.7,.3) circle [radius=0.06];
\node [above] at (3.45,.2) {$\xi_r$};

\draw[fill] (3,-.5) circle [radius=0.06];
\node [above] at (3,-.5) {$x$};

\draw[fill] (3.35,-1.28) circle [radius=0.06];
\node [below] at (4,-1.1) {$\pi_{\lambda_i}(x)$};

\draw [thick, dotted](3,-.5) to (3.35,-1.28);
\draw [thick, dotted](3.35,-1.28) to (3.35,-2);

\draw[fill] (3.35,-2) circle [radius=0.06];
\node [below] at (4.35,-1.9) {$\pi_b\big(\pi_{\lambda_i}(x)\big)$};

\draw[fill] (8,.13) circle [radius=0.06];
\node [below] at (8,.13) {$\lambda_{i\, R}$};

    \end{tikzpicture}
        \caption{A visual for Proposition \ref{Prop: lambda_i is in the open set}. We choose $R$ large enough for any $\xi$ and any $x \in [\ob, \xi_r]_\xi$, its projection to $\lambda_i$ will be within $[\ob, p_i]_{\lambda_i}$. This bounds the distance of all $x \in [\ob, \xi_r]_\xi$ to $b$ in terms of $m_b$ and $m_\lambda$.}
        \label{fig:projection of xi_r}
    \end{figure}

    See Figure \ref{fig:projection of xi_r}. Note that by Lemma \ref{Lem: Lambda is k-Morse}, $2m_b(9, 0)+m_\lambda(\qq, \sQ)$ is also a Morse gauge for $b$.  By \cite[Lemma 4.2]{QRT2} and its proof, there will also exist an $i$ such that any $\xi$ with $\xi \sim \lambda_i$ will also have $\xi|_r \subset \calN_\kappa \big(b, m_b(\qq, \sQ) \big). $
\end{proof}

\begin{corollary} \label{Cor:convergence in k-morse bondary}
    Let $K\geq 0$ be the cobounded constant of $G\curvearrowright X$ and assume $|\partial_\K X | \geq 3$. For any $\Gb \in \partial_\K X $, choose a geodesic $b \in \Gb$ and a sequence $(g_i)_i$ that tracks $b$. Let $\Ga$ be as in Lemma \ref{Lem:g_ia leaves ball of radius R}. Then for any $r>0$, there exists $i$ so that $g_i\Ga\in\mathcal{U}(b,r)$.
\end{corollary}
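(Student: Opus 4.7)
The plan is to recognize that this corollary is essentially a repackaging of Proposition~\ref{Prop: lambda_i is in the open set} in the language of the neighborhood basis $\mathcal{U}(b,r)$ from Definition~\ref{D:open sets in kappa morse}. All the analytic work has already been done; only the translation to the topological statement remains.

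First, I would observe that the quasi-geodesic $\lambda_i = [\ob, p_i] * [p_i, g_i\cdot a(\infty)]$ shares a common tail with the geodesic $g_i a$, since the second portion of $\lambda_i$ is a subray of $g_i a$. Therefore $\lambda_i$ and $g_i a$ eventually coincide (and hence sublinearly fellow-travel in the sense of Definition~\ref{k-fellow traveling}), so $\lambda_i \in g_i\Ga$. Consequently, any $(\qq,\sQ)$-quasi-geodesic representative $\varphi$ of $g_i\Ga$ satisfies $\varphi \sim g_i a \sim \lambda_i$.

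Next, unpacking Definition~\ref{D:open sets in kappa morse}, proving $g_i\Ga \in \mathcal{U}(b,r)$ amounts to showing that for every $(\qq,\sQ)$-quasi-geodesic $\varphi \in g_i\Ga$ for which $m_b(\qq,\sQ)$ is small compared to $r$, one has
\[
\varphi\bigl([0,t_r]\bigr) \subseteq \mathcal{N}_\kappa\bigl(b, m_b(\qq,\sQ)\bigr).
\]
By the previous paragraph, any such $\varphi$ is $\sim$-equivalent to $\lambda_i$, so this is exactly the conclusion of Proposition~\ref{Prop: lambda_i is in the open set}. Applying that proposition with the given $r$ yields an index $i$ for which the containment holds for all $(\qq,\sQ)$-quasi-geodesic representatives of $g_i\Ga$.

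Since there are no new geometric inputs beyond those developed in Lemmas~\ref{Lem: Lambda is k-Morse}--\ref{Prop: lambda_i is in the open set}, I expect no real obstacle. The only subtlety worth noting explicitly is the preliminary remark that $\lambda_i \in g_i\Ga$, which justifies passing from a statement about $(\qq,\sQ)$-rays equivalent to $\lambda_i$ to a statement about representatives of the class $g_i\Ga$.
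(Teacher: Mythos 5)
Your proposal is correct and matches the paper's proof, which simply observes that the corollary is Proposition~\ref{Prop: lambda_i is in the open set} rewritten via the definition of $\mathcal{U}(b,r)$. Your extra remark that $\lambda_i\in g_i\Ga$ (because $\lambda_i$ shares a tail with $g_i a$) is a helpful clarification but does not change the argument.
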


\begin{proof}
    This is Proposition \ref{Prop: lambda_i is in the open set} rewritten using the definition of $\mathcal{U}(b,r)$.
\end{proof}

We now prove the main result of this section, minimality of the sublinearly Morse boundary. Notice that, by Lemma \ref{Lem:g_ia leaves ball of radius R}, for any $\Gb\in\partial_\kappa X$, there exists some element $\Ga\neq\Gb$ so that $\Gb\in\overline{G\Ga}$. However, we need to show that \textbf{any} element of $\partial_\kappa X$ has a dense orbit. For this, Remark \ref{Remark:Symmetry in construction} will be a helpful observation.

\begin{theorem}[Minimality of $\partial_\kappa G$]\label{density}
   Let $K\geq 0$ be the cobounded constant of $G\curvearrowright X$ and assume $|\partial_\K X | \geq 3$. For every $\Gc\in\partial_\kappa X$, the orbit $G\Gc$ is dense in $\partial_\K X$.
\end{theorem}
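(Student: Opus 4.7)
Fix $\Gc \in \partial_\K X$ and let $\Gb \in \partial_\K X$ be arbitrary; the goal is to show $\Gb \in \overline{G\Gc}$. If $\Gb = \Gc$ this is immediate, so assume $\Gb \neq \Gc$. Using $|\partial_\K X| \geq 3$, I pick $\Ga \in \partial_\K X$ distinct from both $\Gb$ and $\Gc$, fix geodesic representatives $b \in \Gb$ and $a \in \Ga$, and fix a sequence $\{g_i\} \subset G$ tracking $b$.

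The proof of Lemma \ref{Lem:g_ia leaves ball of radius R} actually shows more than its stated existential: for any two distinct elements of $\partial_\K X \setminus \{\Gb\}$, at least one has the property that its $g_i$-translates leave every ball $B_R(\ob)$. I will call an element with this property \emph{$\Gb$-good}, and otherwise \emph{$\Gb$-bad}; there is then at most one $\Gb$-bad element, which I denote $\Gb^*$. Moreover, the ``in addition'' clause of the same lemma, applied to any $\Gd \neq \Gb^*$, asserts that for every sequence tracking a representative of $\Gd$, its translates of $b$ leave every ball about $\ob$, i.e., $\Gb$ is $\Gd$-good. Thus $\Gb = \Gd^*$ forces $\Gd = \Gb^*$, so the antipode relation is symmetric.

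If $\Gc \neq \Gb^*$, then $\Gc$ is $\Gb$-good, and Corollary \ref{Cor:convergence in k-morse bondary} (applied with $\Gc$ in place of $\Ga$) directly gives $g_i \Gc \to \Gb$ in $\partial_\K X$, proving $\Gb \in \overline{G\Gc}$.

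The main obstacle is the case $\Gc = \Gb^*$. Here $\Ga \neq \Gb$ is necessarily $\Gb$-good, so Corollary \ref{Cor:convergence in k-morse bondary} gives $g_i \Ga \to \Gb$. By the symmetry of antipodes established above, if $\Gc = \Ga^*$ then $\Ga = \Gc^* = \Gb$, contradicting $\Ga \neq \Gb$. Hence $\Gc \neq \Ga^*$, so $\Gc$ is $\Ga$-good, and another application of Corollary \ref{Cor:convergence in k-morse bondary} produces a sequence $\{h_j\} \subset G$ tracking $a$ with $h_j \Gc \to \Ga$. Since each element of $G$ acts as a homeomorphism on $\partial_\K X$, for each fixed $i$ we have $g_i h_j \Gc \to g_i \Ga$ as $j \to \infty$; a standard diagonal extraction then yields a subsequence $g_i h_{j(i)} \Gc \to \Gb$, so $\Gb \in \overline{G\Gc}$. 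This closes the bad case and completes the argument.
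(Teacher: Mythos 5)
Your argument is correct to the same standard as the paper's and follows the same skeleton: in the nontrivial case you redirect an auxiliary class $\Ga$ toward $\Gb$ and then $\Gc$ toward $\Ga$, applying Corollary~\ref{Cor:convergence in k-morse bondary} twice and composing. The differences are in the mechanics. The paper composes the two steps entirely inside the explicit neighborhood basis: from $g_i\Ga\in\mathcal{U}(b,r)$ it pulls back by the isometry to $\Ga\in\mathcal{U}(g_i^{-1}b,r)$, nests a basis set $\mathcal{U}(a',r')\subseteq\mathcal{U}(g_i^{-1}b,r)$ using \cite[Claim 4.6]{QRT2}, and then lands $h_j\Gc$ in that set, so no limit extraction is needed; you instead use that $g_i$ acts as a homeomorphism on $\partial_\kappa X$ together with a diagonal argument, which works because the sets $\mathcal{U}(b,r)$ form a neighborhood basis. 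The more substantive difference is how one guarantees that $\Gc$ itself can be redirected toward $\Ga$: the paper appeals to Remark~\ref{Remark:Symmetry in construction} (which literally concerns the pair $(a,b)$ rather than $(a,c)$), while you make this explicit via the ``at most one bad class'' observation and the symmetry of the antipode relation --- a more careful treatment of exactly the point the paper elides. Be aware, though, that your symmetry step invokes the ``in addition'' clause of Lemma~\ref{Lem:g_ia leaves ball of radius R} for \emph{every} $\Gb$-good class, whereas the lemma only asserts it for the single class it produces; this strengthening is defensible by inspecting the lemma's proof (that portion of the argument does not use which class was selected), but you should justify it that way rather than attribute it to the lemma's statement. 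The same caveat applies, harmlessly, to your (and the paper's) application of Corollary~\ref{Cor:convergence in k-morse bondary} with an arbitrary good class in place of ``the $\Ga$ of Lemma~\ref{Lem:g_ia leaves ball of radius R}'': goodness of the translated class is the only property its proof uses.
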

\begin{proof}
Let $\Gb,\Gc \in \partial_\kappa X$. If $\Gb=\Gc$, we are done. Otherwise, let $\Ga\neq\Gb\neq\Gc$. Let $a\in\Ga$ $b \in \Gb$, and $c\in\Gc$ be geodesic ray representatives all with domain $[0,\infty)$. Let $\{g_i\}$ be a sequence in $G$ that tracks $b$ and let $\{h_j\}$ be a sequence in $G$ that tracks $a$ as in Lemma \ref{Lem:g_ia leaves ball of radius R}. Let $r>0$ be arbitrary. By Corollary \ref{Cor:convergence in k-morse bondary} there exists $i$ so that
$g_i\Ga\in\mathcal{U}(b,r)$. It is clear that, since the group action of $G\curvearrowright X$ is by isometries, $\Ga\in\mathcal{U}(g_i^{-1}b,r)$. 
By \cite[Claim 4.6]{QRT2}, there exists $r'>0$ and $a'\in\Ga$ so that $\mathcal{U}(a',r')\subseteq \mathcal{U}(g_i^{-1}b,r)$. 
Again by Corollary \ref{Cor:convergence in k-morse bondary} (and keeping in mind Remark \ref{Remark:Symmetry in construction}) there exists $j$ so that $h_j\Gc\in\mathcal{U}(a',r')$, and so $h_j\Gc\in \mathcal{U}(g_i^{-1}b,r)$, i.e., $g_ih_j\Gc\in\mathcal{U}(b,r)$.
\end{proof}

We note that Theorem \ref{thm:minimality of sublinear} adds an additional assumption that the group action $G\curvearrowright X$ is also proper, and in fact is the special case where $X$ is a Cayley Graph for $G$. Therefore, Theorem \ref{thm:minimality of sublinear} is a special case of Theorem \ref{density}.

\subsection{North-South Dynamics on $\partial_K X$}
Similar to the idea of rank-one isometry we can also define $\kappa$-Morse isometry as the an element $g \in G$ whose axis is a $kappa$-Morse geodesic ray from some point onward. Also pertain to the analogy with rank-one isometry we produce the result with the rank-rigidity flavor in Proposition~\ref{Prop: Sublinear Morse Isometry is a Morse isometry}.
\begin{definition}[$\kappa$-Morse isometry]\label{def: Sublinear Morse Isometry}
Let $X$ be a geodesic metric space and let $\ob\in X$ be a basepoint. For an isometry $g: X \rightarrow X$, denote $\ob_n=g^n\left(\ob\right), \eta_i^j=\left[\ob_i, \ob_{i+1}\right] \cup \ldots \cup\left[\ob_{j-1}, \ob_j\right]$, where $i, j \in \mathbb{Z}$. For convenience, we allow that $i=-\infty$ or $j=\infty$. We say the action of $g$ on $X$ is $\kappa$-Morse or the isometry $g$ is $\kappa$-Morse if there exist a $\kappa$-Morse gauge $m(q, Q)$ and a geodesic $\left[\ob_i, \ob_{i+1}\right]$ for each $i$ such that the bi-infinite concatenation of geodesics $ \eta_{-\infty}^{\infty}$ is an $\kappa$-Morse quasi-geodesic.
\end{definition}

Note that, when $\kappa \equiv 1$, this is the definition of a Morse isometry given in \cite{liu2021dynamics}. Using the properties of $\kappa$-Morse quasi-geodesics, it is easy to see that the notion does not depend on the choice of basepoint or choices of geodesics. In particular, we can choose a geodesic $\left[\ob, \ob_1\right]$ and take $\left[\ob_i, \ob_{i+1}\right]=g^i\left[\ob, \ob_1\right]$ for all $i$. This will be our construction in the following Proposition.

\begin{proposition}\label{Prop: Sublinear Morse Isometry is a Morse isometry}
    If $g \in G$ is a sublinear Morse isometry, then $g$ is a Morse isometry.
\end{proposition}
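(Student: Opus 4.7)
The plan is to upgrade the $\kappa$-weakly Morse condition satisfied by the bi-infinite concatenation $\eta = \eta_{-\infty}^{\infty}$ into the classical ($1$-weakly) Morse condition, which by \cite[Remark 3.1]{QRT2} is equivalent to $\eta$ being a Morse quasi-geodesic. Concretely, given a $(q,Q)$-quasi-geodesic $\beta$ with endpoints on $\eta$, I must produce a Morse gauge $M(q,Q)$, depending only on $q, Q$ (and the data of $g$ and $\kappa$), with $d(y, \eta) \leq M(q,Q)$ for every $y \in \beta$.

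The first step is to exploit the $g$-equivariance built into \defref{def: Sublinear Morse Isometry}. Since $[\ob_i, \ob_{i+1}] = g^i[\ob, \ob_1]$, the isometry $g$ preserves $\eta$ setwise and acts on it as a translation with fundamental domain $[\ob, \ob_1]$ of diameter $D := d(\ob, g\ob)$. Hence the orbit $\{g^n \ob\}_{n \in \ZZ}$ is $D$-dense in $\eta$: for every $z \in \eta$ there exists $n \in \ZZ$ with $d(g^{-n}z, \ob) \leq D$.

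Next, fix $y \in \beta$, choose a nearest projection $z \in \pi_\eta(y)$, and pick $n \in \ZZ$ with $d(g^{-n}z, \ob) \leq D$. Since $g^{-n}$ is an isometry preserving $\eta$, the translate $g^{-n}\beta$ is again a $(q,Q)$-quasi-geodesic with endpoints on $\eta$, so the $\kappa$-weakly Morse property gives
\[
d(y, \eta) \;=\; d(g^{-n}y, \eta) \;\leq\; m_\eta(q,Q) \cdot \kappa\!\left(\|g^{-n}y\|\right).
\]
Writing $d := d(y, \eta)$ and using $\|g^{-n}y\| \leq \|g^{-n}z\| + d(g^{-n}z, g^{-n}y) \leq D + d$ together with monotonicity of $\kappa$, this becomes the implicit inequality
\[
d \;\leq\; m_\eta(q,Q) \cdot \kappa(D + d).
\]

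Finally, I would close the argument using sublinearity. Setting $m := m_\eta(q,Q)$, sublinearity of $\kappa$ yields $T = T(m) > 0$ with $\kappa(t) \leq t/(2m)$ for all $t \geq T$. If $d \geq T$ then $D+d \geq T$, so the implicit inequality forces $d \leq (D+d)/2$, hence $d \leq D$. In either case $d \leq \max\{T, D\}$, a constant depending only on $q, Q, m_\eta, \kappa$, and $D$; this is the desired Morse gauge. The main obstacle is the realization that the point-dependent factor $\kappa(\|\cdot\|)$ must be tamed by transporting an arbitrary $y \in \beta$ back near $\ob$ via the $\langle g \rangle$-action before applying the $\kappa$-Morse bound; once that device is in place the sublinearity absorption is routine.
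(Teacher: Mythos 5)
Your proposal is correct and follows essentially the same route as the paper: both use the $\langle g\rangle$-translation to move an arbitrary point of the test quasi-geodesic (via its projection to $\eta$) back near the fundamental domain $[\ob,\ob_1]$, and then observe that the $\kappa$-neighborhood of $\eta$ is uniformly bounded near the basepoint. The only difference is cosmetic: the paper packages the final bound as a supremum $N(\qq,\sQ)$ over a bounded region whose finiteness it asserts, whereas you make that finiteness explicit through the implicit inequality $d\leq m_\eta(q,Q)\,\kappa(D+d)$ and the sublinearity absorption giving $d\leq\max\{T,D\}$.
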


\begin{proof}
    Let $g$ be a sublinear Morse isometry. Then there exists a concatenation of geodesics $\gamma = \eta_{-\infty}^\infty$ as in Definition \ref{def: Sublinear Morse Isometry}. We define $$N(\qq, \sQ) = \text{sup}\bigg\{d(\gamma, x) : x\in  \calN_\kappa \big(\gamma, m_\gamma(\qq, \sQ) \big) \text{ and } \exists x_\gamma \in \pi_\gamma(x) \text{ with } x_\gamma \in [\ob,\ob_1]\bigg\}.$$ We see that since the points in $\calN_\kappa \big(\gamma, m_\gamma(\qq, \sQ) \big)$ that closest point project to $[\ob, \ob_1]$ are bounded, we get that $N(\qq, \sQ)$ will be finite for all $(\qq, \sQ)$.

    Now, choose any quasi-geodesic $\xi$ with endpoints on $\gamma$. Pick any point $x \in \xi $, and let $x_\gamma \in \pi_\gamma(x).$ We have $x_\gamma \in g^i[\ob, \ob_1]$ for some $i.$ So, $g^{-i}x_\gamma \in [\ob, \ob_1].$ Since $g$ is an isometry, $g^{-i}\cdot \xi$ is still a $(\qq, \sQ)$-quasi geodesic with endpoints on $\gamma$ and $g^{-i} x_\gamma \in g^{-i}\cdot\xi$ with $g^{-i} x_\gamma \in \pi_\gamma( g^{-i}\cdot x)$. Thus, by how $N(\qq, \sQ)$ is defined,  $$d(g^{-i}x, \gamma) \leq N(\qq, \sQ).$$ Since $g$ is an isometry that preserves $\gamma$, $$d(x, \gamma) \leq N(\qq, \sQ).$$ Because this is true for all $x \in \xi$ , we conclude that $\gamma$ is $N$-Morse. See Figure \ref{fig:sublinear isometry}. \end{proof}

     \begin{figure}[H]
    \centering
    \begin{tikzpicture}[scale=1]

\definecolor{aqua}{rgb}{0.0, 1.0, 1.0}

    \draw [thick, <->](-6.2,0) to (6.2,0);
    \node [below] at (-6,-0.1) {$\gamma$};
    \draw[fill] (0,0) circle [radius=0.06];
    \node[below] at (0,0) {$\ob$};

     \draw[fill] (1,0) circle [radius=0.04];
     \draw[fill] (5,0) circle [radius=0.04];
     \draw[fill] (6,0) circle [radius=0.04];

\node [below] at (1,0) {$\ob_1$};
\node [below] at (5,0) {$\ob_i$};
\node [below] at (6,0) {$\ob_{i+1}$};

\node [below] at (2.3,0.5) {$\cdots$};

     \color{red}
 \pgfsetlinewidth{1pt}
  \pgfsetplottension{.75}
  \pgfplothandlercurveto
  \pgfplotstreamstart
  \pgfplotstreampoint{\pgfpoint{3.6cm}{0cm}}  
  \pgfplotstreampoint{\pgfpoint{3.9cm}{.6cm}}   
  \pgfplotstreampoint{\pgfpoint{4.2cm}{.3cm}}
  \pgfplotstreampoint{\pgfpoint{4.5cm}{.9cm}}
  \pgfplotstreampoint{\pgfpoint{4.8cm}{.6cm}}
  \pgfplotstreampoint{\pgfpoint{5cm}{.8cm}}
  \pgfplotstreampoint{\pgfpoint{5.3cm}{.4cm}}
  \pgfplotstreampoint{\pgfpoint{5.6cm}{.5cm}}
  \pgfplotstreampoint{\pgfpoint{5.9cm}{0cm}}
  \pgfplotstreamend
  \pgfusepath{stroke} 
  \node [above] at (3.9,.6) {$\xi$};
  \color{black}
 
  \draw[fill] (5.6,.5) circle [radius=0.04];
  \node [above] at (5.65,.5) {$x$};
  \draw [thick, dotted](5.6,.5) to (5.6,0);

 \color{red}
 \pgfsetlinewidth{1pt}
  \pgfsetplottension{.75}
  \pgfplothandlercurveto
  \pgfplotstreamstart
  \pgfplotstreampoint{\pgfpoint{3.6cm-5cm}{0cm}}  
  \pgfplotstreampoint{\pgfpoint{3.9cm-5cm}{.6cm}}   
  \pgfplotstreampoint{\pgfpoint{4.2cm-5cm}{.3cm}}
  \pgfplotstreampoint{\pgfpoint{4.5cm-5cm}{.9cm}}
  \pgfplotstreampoint{\pgfpoint{4.8cm-5cm}{.6cm}}
  \pgfplotstreampoint{\pgfpoint{5cm-5cm}{.8cm}}
  \pgfplotstreampoint{\pgfpoint{5.3cm-5cm}{.4cm}}
  \pgfplotstreampoint{\pgfpoint{5.6cm-5cm}{.5cm}}
  \pgfplotstreampoint{\pgfpoint{5.9cm-5cm}{0cm}}
  \pgfplotstreamend
  \pgfusepath{stroke} 
  \node [above] at (-1.1,.6) {$g^{-i}\xi$};
  \color{black}

  \draw[fill] (5.6-5,.5) circle [radius=0.04];
  \node [above] at (5.85-5,.5) {$g^{-i} x$};
  \draw [thick, dotted](5.6-5,.5) to (5.6-5,0);

 \draw [dashed] (0, 1) to [bend left = 10] (6,3){};
 \draw [dashed] (0, -1) to [bend right = 10] (6,-3){};
 \draw [dashed] (0, 1) to [bend right = 10] (-6,3){};
 \draw [dashed] (0, -1) to [bend left = 10] (-6,-3){};


\fill[blue, opacity=0.2] (0,1) .. controls(.3,1.2) and (.7,1.4) .. (1, 1.55) -- (1,-1.55) ..  controls(.7,-1.4) and (.3,-1.2) ..  (0,-1) -- cycle;

    \end{tikzpicture}
    \caption{A picture of the argument in \ref{Prop: Sublinear Morse Isometry is a Morse isometry}. Note that $N(\qq,\sQ)$ is largest distance to $\gamma$ among points in the blue region.}
    \label{fig:sublinear isometry}
\end{figure}
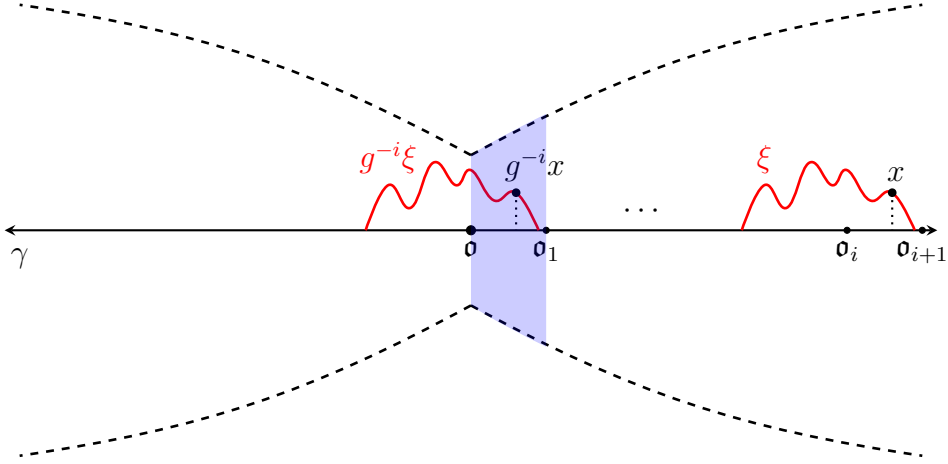

As an immediate corollary, we recover weak north-south dynamics on the sublinearly Morse boundary. This is a direct analogue to the statement of weak north-south dynamics in the Morse boundary, see \cite[Corollary 6.8]{liu2021dynamics}.

\begin{corollary}[Weak North-South Dynamics for sublinearly Morse Boundaries]
    Let $X$ be a proper geodesic space and $\ob$ be a basepoint. Let $g$ be a Sublinearly Morse isometry of $X$. Given any open neighborhood $U$ of $g^{\infty}$in $\partial_\kappa X$ and any compact set $K \subset \partial_\kappa X$ with $g^{-\infty} \notin K$. There exists an integer $n$ such that $g^n(K) \subset U$.
\end{corollary}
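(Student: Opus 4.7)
The plan is to reduce to the Morse setting via Proposition~\ref{Prop: Sublinear Morse Isometry is a Morse isometry} and then adapt the pointwise-convergence-plus-compactness machinery developed for the minimality theorem. By the Proposition, $g$ is a Morse isometry with bi-infinite Morse axis $\gamma = \eta_{-\infty}^{\infty}$; the points $\ob_n = g^n\ob$ lie on $\gamma$ by construction, and $g^{\infty}, g^{-\infty}$ are represented in $\partial_\kappa X$ by the two ends of $\gamma$.

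The core step is pointwise convergence: for every $\Gc \in \partial_\kappa X$ with $\Gc \neq g^{-\infty}$ and every $r > 0$, I will produce an integer $N$ such that $g^n \Gc \in \mathcal{U}(g^{\infty}, r)$ for all $n \geq N$. Fix a geodesic representative $c \in \Gc$ and, for each $n$, choose $p_n \in \pi_{g^n c}(\ob)$. First, $\|p_n\| \to \infty$: if $\|p_{n_k}\| \leq R$ along a subsequence, then applying $g^{-n_k}$ shows that $c$ passes within $R$ of $g^{-n_k}\ob$ for each $k$; since $g^{-n_k}\ob \to g^{-\infty}$ along the negative end of $\gamma$, this would force $c$ to sublinearly track $g^{-\infty}$, contradicting $\Gc \neq g^{-\infty}$. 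Next, form $\lambda_n = [\ob, p_n] * [p_n, g^n c(\infty)]_{g^n c}$; a direct adaptation of Lemmas~\ref{Lem:(81,2K)-quasi-geo-ray} and \ref{Lem: Lambda is k-Morse}, with the bounded closeness of $g^n\ob$ to $\gamma$ playing the role previously played by coboundedness, shows that $\lambda_n$ is a $(3,0)$-quasi-geodesic that is $\kappa$-Morse with gauge independent of $n$. Finally, the Morseness of $\gamma$ together with $\|p_n\| \to \infty$ yields $\lambda_n \in \mathcal{U}(g^{\infty}, r)$ for large $n$, exactly as in Propositions~\ref{Prop: Lambda itself is K-close} and \ref{Prop: lambda_i is in the open set}; since $\lambda_n$ eventually coincides with $g^n c$, one has $[\lambda_n] = g^n\Gc$ in $\partial_\kappa X$, and hence $g^n \Gc \in \mathcal{U}(g^{\infty}, r)$.

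I then upgrade this pointwise statement to the uniform compact statement required by the corollary. Since $g$ acts by homeomorphisms on $\partial_\kappa X$ (by QI-invariance of the topology) and the sets $\{\mathcal{U}(g^{\infty}, r)\}_{r > 0}$ form a neighborhood basis at $g^{\infty}$, for each $\Gc \in K$ the previous step yields an open neighborhood $V_\Gc \ni \Gc$ and an integer $n_\Gc$ such that $g^n V_\Gc \subseteq U$ for all $n \geq n_\Gc$. Compactness of $K$ extracts a finite subcover $V_{\Gc_1}, \ldots, V_{\Gc_k}$, and taking $n = \max_i n_{\Gc_i}$ completes the proof. The main obstacle is the $\kappa$-Morseness and neighborhood-inclusion verification for $\lambda_n$: the sublinearly Morse topology demands that the inclusion hold for \emph{every} quasi-geodesic representative of $g^n \Gc$, so the Morse gauges of $\lambda_n$ and of competing representatives must be tracked uniformly in $n$ and in their quasi-geodesic constants, in the same careful manner as in the minimality argument.
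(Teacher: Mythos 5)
Your reduction to the Morse case via Proposition~\ref{Prop: Sublinear Morse Isometry is a Morse isometry} matches the paper, and your pointwise step (running the $\lambda_i$-machinery of Lemmas~\ref{Lem:(81,2K)-quasi-geo-ray}--\ref{Lem: Lambda is k-Morse} with the orbit $g^n\ob$ along the axis $\gamma$ in place of a cobounded tracking sequence) is plausible. The genuine gap is the upgrade to compact sets. You write that ``the previous step yields an open neighborhood $V_{\Gc}\ni\Gc$ and an integer $n_{\Gc}$ such that $g^nV_{\Gc}\subseteq U$ for all $n\geq n_{\Gc}$,'' but the previous step only gives pointwise convergence $g^n\Gc\to g^{\infty}$. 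For each \emph{fixed} $n$ the set $g^{-n}U$ is an open neighborhood of $\Gc$, but $\bigcap_{n\geq n_{\Gc}}g^{-n}U$ need not be open, so no such $V_{\Gc}$ is produced; what you are asserting is locally uniform convergence, which is precisely the content of weak north--south dynamics and the hard part of the proof. In the sublinearly Morse topology this is especially delicate: membership of $g^n\Gb$ in $\mathcal{U}(g^{\infty},r)$ is a condition on \emph{every} representative of $g^n\Gb$ whose gauge is small compared to $r$, and to push a whole basic neighborhood of $\Gc$ into $U$ uniformly in $n$ you would need estimates depending only on the quasi-geodesic constants and on a fixed fellow-travelling radius, not on the individual class $\Gb$; compact subsets of $\partial_\kappa X$ do not obviously come with such uniform control. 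Your closing sentence names this obstacle but does not discharge it, so the quantifier over compact sets in the statement is not established.

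For comparison, the paper does not attempt a self-contained dynamical argument: after the same reduction to a Morse isometry, it invokes minimality (Theorem~\ref{density}) together with the proof of \cite[Theorem~3.6]{QZ}, and it is in that cited argument that the uniform treatment of a compact set avoiding $g^{-\infty}$ is carried out. If you want a self-contained proof along your lines, the missing ingredient is exactly a quantitative version of your pointwise step: for every $r>0$ there should exist $r_0>0$ and $N$ such that every class admitting a representative that $\kappa$-fellow-travels $c$ up to radius $r_0$ is sent by $g^n$, for all $n\geq N$, into $\mathcal{U}(g^{\infty},r)$, with all constants depending only on the gauges of $\gamma$ and $c$ and on $r$.
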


\begin{proof}
    This is immediate from Proposition \ref{Prop: Sublinear Morse Isometry is a Morse isometry}, Theorem \ref{density}, and the proof of \cite[Theorem 3.6]{QZ}.\end{proof}

\section{Topological properties of the quasi-redirecting boundary}

We now turn our attention to quasi-redirecting boundaries. As discussed in \cite{QR24}, a main motivator for the QR boundary is to be a bordification which is as large as possible. In this next theorem, we show that QR boundaries are not too large compared to the sublinearly Morse boundary. In particular, we show that when the QR boundary of $G$ is defined (as a topological space), as long as the sublinearly Morse boundary is sufficiently large, (i.e., contains at least three elements,) then it is dense in the QR boundary. We assume in this section that the QR boundary exists for the groups in equestion.

\begin{theorem}\label{Theorem: Miminality QR}

Assume that $\partial G$ exists. If $|\partial_\kappa G|\geq 3$, then $\partial_\kappa G$ is a dense subset of the QR boundary. In particular, if $\bfb$ is any element of $\partial G$, then there exists an infinite sequence $\{\bfa_i\} \in \pka G$ such that the sequence converges to $\bfb$ in the topology of $\partial G$. 
\end{theorem}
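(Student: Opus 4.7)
Given $\bfb\in\partial G$ with central $\qq_0$--ray $b_0$, the plan is to show that every QR neighborhood $\calU(\bfb,r)$ contains an element of $\pka G$; extracting one such element for each integer $r=i$ then yields an infinite sequence $\{\bfa_i\}\subset\pka G$ converging to $\bfb$, since $\calU(\bfb,i)\subseteq\calU(\bfb,r)$ whenever $i\geq r$. The broad strategy mirrors the proof of \thmref{density}: use cocompactness of $G\curvearrowright X$ to transport a fixed sublinearly Morse class so that a representative initially shadows $b_0$ out to radius $n$ and only then diverges. Fix once and for all a class $\bfa\in\pka G$, which exists since $|\pka G|\geq3$, with $\kappa$--Morse geodesic representative $a$.

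For the construction, let $K$ be the cocompactness constant of $G\curvearrowright X$ and, for each $n\in\NN$, pick $g_n\in G$ with $d(g_n\go,b_0(t_n))\leq K$. Form the candidate ray
\[
\nu_n \;:=\; b_0\big|_n \,*\, [b_0(t_n),\,g_n\cdot \go] \,*\, (g_n\cdot a).
\]
By the surgery items in \lemref{surgery}, $\nu_n$ is a quasi-geodesic whose constants depend only on $\qq_0$, the constants of $a$, and $K$, hence are uniform in $n$. The tail $g_n\cdot a$ is $\kappa$--Morse because translates of $\kappa$--Morse rays under isometries remain $\kappa$--Morse, and $\kappa$--Morseness is a tail-property, so $\nu_n$ itself is $\kappa$--Morse with a gauge $m_\nu$ independent of $n$. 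Consequently $\bfa_n:=[\nu_n]\in\pka G$; in fact $\bfa_n$ is the orbit element $g_n\cdot\bfa$.

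The heart of the argument is to check $\bfa_n\in\calU(\bfb,r)$ for all $n$ sufficiently large relative to $r$. Take any $\qq$--ray $\mu\in\bfa_n$; by \lemref{Lem:Max} it suffices to treat $\qq\leq\qq_{\max}(r)$. Because $\mu\simeq\nu_n$ and $\nu_n$ is $\kappa$--Morse, \defref{defn:weakly-Morse} together with \cite[Remark 3.1]{QRT2} gives $\mu\subset\calN_\kappa(\nu_n,m_\nu(\qq))$. Let $y\in\nu_n$ be a nearest point to $\mu(t_r)$. Then $d(\mu(t_r),y)\leq m_\nu(\qq)\kappa(r)$, and since $\nu_n$ is a quasi-geodesic the value $\|y\|$ lies in a window of size $O(m_\nu(\qq)\kappa(r))$ about $r$. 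Choosing $n$ large enough that this window lies strictly below $n$ forces $y$ onto the initial arc $b_0|_n\subset b_0$. The concatenation
\[
\gamma \;:=\; \mu|_r \,*\, [\mu(t_r),y] \,*\, b_0\big|_{[y,\infty)}
\]
is then a quasi-geodesic by \lemref{surgery}, satisfies $\gamma|_r=\mu|_r$, and eventually coincides with $b_0$, providing the required redirection at radius $r$.

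The principal obstacle is matching the constants of $\gamma$ to the specific bound $F_\bfb(\qq)$ demanded by \defref{Def:The-In-Topology}: the surgery constants are controlled by some function $H(\qq,\qq_0,K,m_\nu(\qq))$ which is uniform in $n$ but not a priori bounded by $F_\bfb(\qq)=\mathbf{f}_\bfb(\qq)+(1,q)$. The resolution exploits the flexibility in the choice of the redirecting function $\mathbf{f}_\bfb$ guaranteed by Assumption 2: since that assumption only requires $\mathbf{f}_\bfb$ to dominate redirections from classes $\bfc\prec\bfb$, replacing $\mathbf{f}_\bfb$ by $\max(\mathbf{f}_\bfb,H)$ preserves all required properties while enlarging $\calU(\bfb,r)$ only to points already identified by the above surgery. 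Alternatively, one may modify the construction so that $\bfa_n\prec\bfb$ in the poset and then invoke Assumption 2 directly. Either way, $\bfa_n\in\calU(\bfb,r)$ for all $r$ and all sufficiently large $n$, and $\bfa_n\to\bfb$ in $\partial G$.
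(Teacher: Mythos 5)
There is a genuine gap at the very first step of your construction. You fix an \emph{arbitrary} class $\bfa\in\pka G$ and assert that $\nu_n = b_0|_n * [b_0(t_n), g_n\cdot\go] * (g_n\cdot a)$ is a quasi-geodesic with constants (and $\kappa$-Morse gauge) uniform in $n$ ``by the surgery items in \lemref{surgery}.'' No surgery lemma gives this: $g_n\cdot\go$ is not a nearest-point projection of anything, and, more seriously, the translated ray $g_n\cdot a$ may double back toward the basepoint --- for instance when $a$ heads in a direction ``opposite'' to $b$, the ray $g_n\cdot a$ passes near $\go$, so $\nu_n$ travels out distance $n$ and returns, and its quasi-geodesic constants degenerate as $n\to\infty$. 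The same failure breaks your later step where the nearest point $y\in\nu_n$ to $\mu(t_r)$ is forced onto $b_0|_n$: if the tail $g_n\cdot a$ re-enters a ball of radius comparable to $r$, nothing prevents $y$ from lying on the tail. This is exactly what the paper's Lemma~\ref{Lem:g_ia leaves ball of radius R} is for: using $|\pka G|\geq 3$ (which in your write-up is used only to guarantee that some class exists), one \emph{chooses} $\Ga$ so that the translates $g_i\cdot a$ eventually leave every ball $B_R(\go)$, and then the based representative is built as $[\go,p_n]*[p_n,g_n\cdot a(\infty)]$ with $p_n=\pi_{g_n\cdot a}(\go)$, so that the surgery lemmas genuinely apply and the constants $(27,3K)$, resp.\ $(3,0)$, and the Morse gauge are uniform (Lemmas~\ref{Lem:(81,2K)-quasi-geo-ray} and \ref{Lem: Lambda is k-Morse}). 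Without this choice and this projection-based construction your $\bfa_n$ need not admit any representative to which your window argument applies.

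Your closing discussion of the constant-matching with $F_\bfb(\qq)$ identifies a real subtlety (the paper is terse about it too), but neither of your proposed fixes is sound as stated. Replacing $\mathbf{f}_\bfb$ by $\max(\mathbf{f}_\bfb,H)$ enlarges the sets $\calU(\bfb,r)$ and hence a priori coarsens the topology of $\partial G$; proving convergence with respect to enlarged neighborhoods is weaker than the assertion of the theorem unless one separately knows the topology is independent of the choice of redirecting function. The alternative of arranging $\bfa_n\prec\bfb$ and invoking Assumption~2 cannot work in general: the theorem allows $\bfb$ itself to be sublinearly Morse, and such classes are minimal in the partial order, so no class lies strictly below them --- in that case membership in $\calU(\bfb,r)$ must come from redirection \emph{at the single radius $r$}, not from $\preceq$. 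The intended mechanism (from \cite{QR24}) is rather that redirection with uniform but possibly large constants at a sufficiently large radius implies $\mathbf{f}_\bfb(\qq)$-redirection at the given radius $r$, which converts the $(12q,3Q)$-type surgeries into the required $F_\bfb(\qq)$-redirections.
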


\begin{proof}
 Let $X$ be a Cayley graph of $G$ (with respect to a finite generating set) once and for all. Consider a geodesic representative $b \in \bfb \in \partial X$. By Lemma \ref{Lem:(81,2K)-quasi-geo-ray}, there exists $a\in\bfa$ so that for any radius $R$ there exists an element $g$ so that $[\ob, p]\cup[p,g\cdot a(\infty)]$ quasi-redirects to $b$ at radius $R$ via an $(27,3K)$ quasi-geodesic, where $p=\pi_{g\alpha}(\ob)$.

Let $r > 0$. Then by Lemma \ref{Lem:Max}, there exists $\qq_{\rm max}$ such that $\qq \nleq \qq_{\rm max}$ implies any $\qq$-ray $\alpha$ quasi-redirects the $b$ at radius $r$. Denote $m_a$ as the $\K$-Morse gauge for $a$ (we can assume that $m_a(100\qq_{\rm max})$ is also small compared to $r$). Let $\{g_i\}$ be a group sequence that tracks $b$ and let $\bfa_i = g_i\cdot \bfa$. By Lemma \ref{Lem:g_ia leaves ball of radius R}, there exists an $n$ so that, for some $\K$-Morse geodesic $a \in \bfa$,  $g_n\cdot a \cap B_{10m_a(\qq_{\rm max}) +r +2K}(\ob) = \emptyset$. Note that, by the first paragraph, $[\ob, p]\cup[p,g\cdot a(\infty)]$ quasi-redirects to $b$ at radius $r$.

Let $\alpha \in \bfa_n \in \partial_\K X$ be a $\qq = (q,Q)$ quasi geodesic so that $\qq \leq \qq_{\rm max}$. Consider $\ob_n = g_n\cdot \ob$ and $\pi_\alpha(\ob_n)$. Because $[\ob_n, \pi_\alpha(\ob_n)]\cup [\pi_\alpha(\ob_n), \alpha(\infty)]_\alpha$ is a $(3q, Q)$-quasi-geodesic that fellow travels $g_n\cdot a$, we must have that $\pi_\alpha(\ob_n)$ is within $m_a(\qq_{\rm max})$ of $g_n \cdot a$. In particular, $\pi_\alpha(\ob_n)$ is not contained in $\alpha|_r$. This means that $[\ob, \pi_\alpha(\ob_n)]_\alpha \cup [\pi_\alpha(\ob_n), \ob_n]$ is a $(3q,Q)$ quasi geodesic that contains $\alpha|_r$ and has $\ob_n$ within $K$ of $b_{R'}$ for some $R'$.  

Using some sufficiently large $s>0$ one can now perform a segment to geodesic ray surgery (Lemma \ref{surgery}) on $[\ob, \pi_\alpha(\ob_n)]_\alpha \cup [\pi_\alpha(\ob_n), \ob_n]$ and $[b_s, b(\infty)]_b$. Also, since $\ob_n$ is within $K$ of $b_{R'}$, it must be that the point on $[\ob, \pi_\alpha(\ob)]_\alpha \cup [\pi_\alpha(\ob), \ob_n]$ that realizes the set distance to $[b_s, b(\infty)]_b$ is not contained in $\alpha|_r$. Thus, We have found a $(12q,3Q)$-quasi-geodesic that redirects $\alpha$ to $b$ at radius $r$. See Figure \ref{fig: Minimality QR}. 

Therefore by Definition \ref{Def:The-In-Topology}, for every $r>0$ there exists $g_n\in G$ so that $g\cdot\bfa=\bfa_n\in\mathcal{U}(\bfb,r)$, and so $\bfb$ is in the closure of $G\bfa$, i.e. $\bfb\in\overline{G\bfa}$.

\end{proof}
\begin{figure}[h]
    \centering

 \begin{tikzpicture}[scale=.8]

\definecolor{aqua}{rgb}{0.0, 1.0, 1.0}

    \draw [blue,thick, ->](1,-2) to (-1,3);
    \node[blue, above] at (-1,3) {$a$};

    \draw [blue,thick, ->](1+8+1,-2+.7) to (-1+8+1,3+.7);
    \node[blue, above] at (-1+8+1,3+.7) {$g_n\cdot a$};
    \node[blue, left] at (1+8+1,-2+.6) {$\ob_n$};

    \draw [thick](1,-2) to (15,-2);
    \node [below] at (16,-2) {$b$};
    \draw[fill] (1,-2) circle [radius=0.06];
    \node[below] at (1,-2) {$\ob$};
    \node[below] at (10,-2) {$b_{R'}$};
    \draw[fill] (10,-2) circle [radius=0.06];
    \draw[fill] (15,-2) circle [radius=0.06];
    \node[below] at (15,-2) {$b_{s}$};
    \draw [thick, red, ->](15,-2) to (16,-2);


\draw [thick, dotted, red](1+8+1,-2+.7) to (6,1.5);
\draw[fill] (1+8+1,-2+.7) circle [radius=0.06];
\draw[fill] (6,1.5) circle [radius=0.06];
\node[above] at (5.5,1.5) {$\pi_\alpha(\ob_n)$};


\draw [thick, dotted, red](1+8+1,-2+.7) to (15,-2);


  \node [above] at (6.7,4) {$\alpha$};

  \pgfsetlinewidth{1pt}
  \pgfsetplottension{.75}
  \pgfplothandlercurveto
  \pgfplotstreamstart
  \pgfplotstreampoint{\pgfpoint{1cm}{-2cm}}  
  \pgfplotstreampoint{\pgfpoint{2cm}{0cm}}   
  \pgfplotstreampoint{\pgfpoint{3cm}{-.5cm}}
  \pgfplotstreampoint{\pgfpoint{4cm}{.5cm}}
  \pgfplotstreampoint{\pgfpoint{5cm}{0cm}}
   \pgfplotstreampoint{\pgfpoint{5.5cm}{1cm}}
  \pgfplotstreampoint{\pgfpoint{6cm}{1.5cm}}
  \pgfplotstreamend
  \color{red}
  \pgfusepath{stroke}

 \pgfsetlinewidth{1pt}
  \pgfsetplottension{.75}
  \pgfplothandlercurveto
  \pgfplotstreamstart
  \pgfplotstreampoint{\pgfpoint{6cm}{1.5cm}}
  \pgfplotstreampoint{\pgfpoint{6.5cm}{1.8cm}}
  \pgfplotstreampoint{\pgfpoint{7.5cm}{1.8cm}}
  \pgfplotstreampoint{\pgfpoint{6.3cm}{3cm}}
  \pgfplotstreampoint{\pgfpoint{7cm}{3.2cm}}
  \pgfplotstreampoint{\pgfpoint{6.7cm}{4cm}}
  \pgfplotstreamend
  \pgfsetarrowsend{>}
  \color{black}
  \pgfusepath{stroke} 

\draw[thin, dotted] (1,-2) circle [radius=2];
\draw[fill] (1.83,-.2) circle [radius=0.06];
\node[above] at (1.78,-.13) {$\alpha_r$};

    \end{tikzpicture}
    \caption{A visual for the proof of Theorem \ref{Theorem: Miminality QR}. We choose $g_n$ so that $g_n\cdot a$ is sufficiently far away from $\ob$. Then we use the segment to geodesic ray surgery lemma to create a $(12q,3Q)$-quasi-geodesic (in red) comprised of connecting $[\ob, \pi_\alpha(\ob_n)]_\alpha \cup [\pi_\alpha(\ob_n), \ob_n]$ and $[b_s, b(\infty)]$. }
    \label{fig: Minimality QR}
\end{figure}
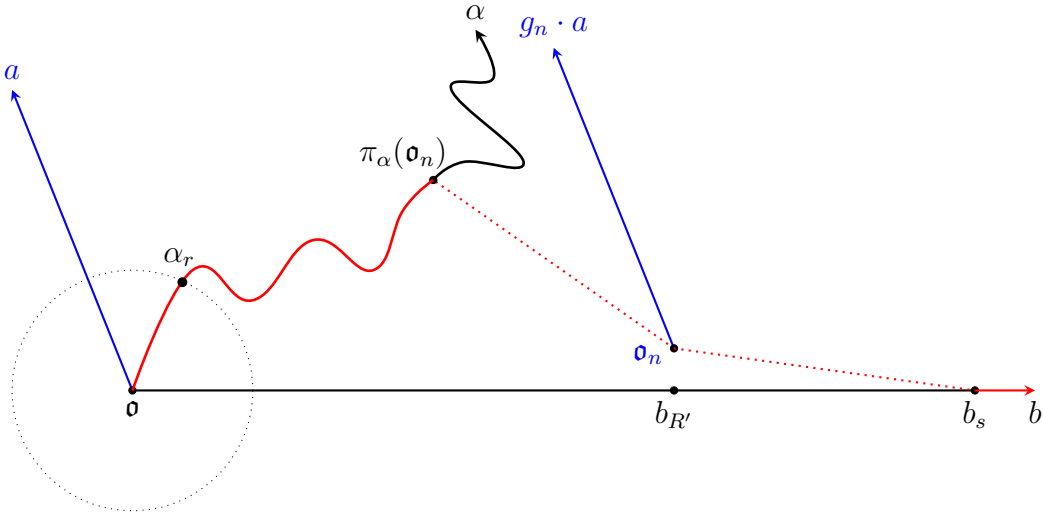

As an immediate corollary of this theorem, we can show that the QR-boundary of $G$, when exists, is second countable, if the sublinearly Morse boundary contains 3 or more points. Second countability of $\partial X$ is shown for asymptotically tree-graded spaces by Rafi and Qing in \cite[Proposition 8.16]{QR24}. Thus the corollary greatly expand the set of groups whose QR boundary (if exists) is second countable.

\begin{corollary}[Second countability]
  Assume that $\partial G$ exists.  If $|\pka G|\geq 3$, then $\partial G$ is second countable. 
\end{corollary}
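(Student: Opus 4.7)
The plan is to build an explicit countable basis for the topology of $X \cup \partial G$. Fix any $\Ga \in \pka G$, which exists since $|\pka G| \geq 3$, and let $\alpha_0 \in \Ga$ denote its central $\qq_0$-ray. Since $G$ is finitely generated and hence countable, both $G$ and the orbit $D := G \cdot \Ga \subseteq \pka G \subseteq \partial G$ are countable. Let $X^{(0)}$ denote the vertex set of the Cayley graph $X$, which is also countable, and consider
\[
\calB := \{ B(x, 1/n) : x \in X^{(0)},\, n \in \NN \} \cup \{ \calU(\bfc, n) : \bfc \in D,\, n \in \NN \}.
\]
This is a countable collection of basic open sets, and the plan is to show that $\calB$ is a basis for the topology on $X \cup \partial G$.

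At points of $X$, the balls $B(x,1/n)$ obviously provide a local basis. The content lies at points $\bfb \in \partial G$: given any $r > 0$, I must produce $g \in G$ and $n \in \NN$ such that
\[
\bfb \;\in\; \calU(g\Ga, n) \;\subseteq\; \calU(\bfb, r).
\]
The first inclusion says that $\bfb$ itself is $F_{g\Ga}$--redirectable to $g \cdot \alpha_0$ at radius $n$; the second says that anything $F_{g\Ga}$--redirectable to $g\cdot \alpha_0$ at radius $n$ is also $F_\bfb$--redirectable to the central representative of $\bfb$ at radius $r$.

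To obtain both inclusions, I would re-use the surgery from the proof of Theorem~\ref{Theorem: Miminality QR}. Fix a geodesic representative $b \in \bfb$ and a sequence $\{g_i\} \subset G$ tracking $b$ as in Lemma~\ref{Lem:g_ia leaves ball of radius R}. For $i$ sufficiently large (depending on $r$), set $\bfc := g_i \Ga$. The argument of Theorem~\ref{Theorem: Miminality QR} provides a uniform-quality quasi-redirection of $g_i \cdot \alpha_0$ onto $b$ at radius $r$, while the symmetric surgery (Remark~\ref{Remark:Symmetry in construction}) gives a quasi-redirection of $b$ onto $g_i \cdot \alpha_0$ at a radius $n$ that grows without bound with $i$; this yields the first inclusion. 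For the second inclusion, given $\bfd \in \calU(g_i\Ga, n)$, any $\qq$-ray in $\bfd$ can be $F_{g_i \Ga}(\qq)$-redirected to $g_i \cdot \alpha_0$ at radius $n$, and past radius $n$ this ray coincides with $g_i \cdot \alpha_0$, which in turn redirects onto $b$ at radius $r \leq n$; transitivity of quasi-redirecting then yields a quasi-redirection of $\bfd$ onto $b$ at radius $r$, with constants controlled by composing the two redirecting functions.

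The main obstacle is the last step: the composite constants must lie below $F_\bfb(\qq)$, not merely below some function depending on $g_i \Ga$. I would absorb this by invoking the uniform redirecting function of $\bfb$ guaranteed by Assumption~2 and choosing $n$ large relative to the surgery constants, so that the composite redirection of $\bfd$ onto $b$ has the required quality. Once this bookkeeping is in place, $\calB$ is a countable basis and $\partial G$ is second countable.
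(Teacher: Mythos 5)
Your construction is the same as the paper's: the paper's entire proof consists of choosing $\Ga\in\pka G$ whose orbit is dense (Theorem~\ref{Theorem: Miminality QR}), observing that $G\Ga$ is countable, and taking the family $\calU(g\Ga,r)$ with $r\in\QQ^+$ as the countable basis; it does not carry out the verification that this family is in fact a basis. You correctly identify that verification --- the nesting $\bfb\in\calU(g\Ga,n)\subseteq\calU(\bfb,r)$ at an arbitrary $\bfb\in\partial G$ --- as the real content, but your plan for it does not go through.

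The first inclusion is where the argument breaks. By Definition~\ref{Def:The-In-Topology}, $\bfb\in\calU(g\Ga,n)$ demands that \emph{every} $\qq$--ray in the class $\bfb$, for every $\qq$, be $F_{g\Ga}(\qq)$--redirected to $g\cdot\alpha_0$ at radius $n$; the surgeries you invoke (Lemma~\ref{Lem:(81,2K)-quasi-geo-ray} and Remark~\ref{Remark:Symmetry in construction}) only produce one quasi-geodesic of fixed quality $(27,3K)$ redirecting the chosen geodesic representative $b$, and give no control over the other rays in $\bfb$ nor any comparison with the fixed function $F_{g\Ga}$. Worse, the membership can genuinely fail exactly where you need it: $g\Ga$ is sublinearly Morse, hence minimal in $P(G)$, so a class $\bfb$ with $\bfb\not\preceq g\Ga$ cannot lie in $\calU(g\Ga,n)$ for all large $n$ with uniform constants. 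Concretely, in the example of Section~\ref{counter} the class $\bfz$ contains rays staying in a flat through $\go$; redirecting such a ray to a Morse ray $g\cdot\alpha_0$ at radius $n$ forces backtracking comparable to $n$, so no fixed $F_{g\Ga}(\qq)$ works once $n$ is large --- yet your second inclusion requires taking $n$ at least $r$ and in fact large relative to $r$, so the two requirements pull against each other. The second inclusion has its own unresolved constant problem: you must redirect every $\qq$--ray of an arbitrary $\Gd\in\calU(g\Ga,n)$ to $b_0$ with constants at most $F_\bfb(\qq)$, but Assumption~2 only supplies the bound $f_\bfb(\qq)$ for classes $\prec\bfb$, so it cannot ``absorb'' the composite constants, and enlarging $n$ does not shrink them. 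Note finally that Theorem~\ref{Theorem: Miminality QR} proves the reverse membership $g_i\Ga\in\calU(\bfb,r)$ (which is what density means); density of a countable set by itself does not yield a countable basis, so the nesting property you set out to prove is indeed the missing step, and it remains missing.
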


\begin{proof}
    Choose $\Ga\in\partial_\kappa X$ so that $G\Ga$ is dense in $\partial X$. Since $G$ is finitely generated, $G\Ga$ is countable. Then take all open sets $\mathcal{U}(g\Ga,r)$ as in 
    Definition~\ref{Def:The-In-Topology} with $r\in\mathbb{Q}^+$.
\end{proof}

\subsection{The $G$ action is not minimal.}
\label{counter}
On the other hand, Section 11 of \cite{QR24} exhibit an example where the action of $G$ on $\partial X$ is not minimal. We briefly describe the example here and refer to
\cite[Section 10]{QR24} for a complete treatment. Let $G$ be the following irreducible right-angled Artin group: 
\[
G = \big \langle a, b, c, d | [a,b], [b, c], [c, d]\big \rangle.
\]

A \emph{block} in $X$ is a convex infinite subset of $X$ that is a lift of either  $S_1 \cup S_2$, 
or $S_2 \cup S_3$. Thus a block is isometric to the universal cover of the Salvetti complex of either of the following groups 
\[ 
G_1 = \big\langle a, b, c | [a, b], [b, c] \, \big\rangle \qquad \text{or} \qquad 
G_2 =\big \langle b, c, d | [b, c], [c, d] \, \big\rangle. 
\]
In other words, each blocks comes with a co-compact action of a conjugate copy of either $G_1$ of $G_2$. 
A \emph{flat} in $X$ is a lift of $S_1, S_2$ or $S_3$. Given a pair of blocks, their intersection in $X$ is 
either empty or a \emph{flat} (in fact, always a lift of $S_2$). That is, a flat comes with a compact action of 
a conjugate of the group $ \big\langle b, c | [b, c] \, \big\rangle = \ZZ^2$.  We refer to these as $bc$--flats. 

One can construct a graph where vertices are blocks and two vertices are 
connected by an edge if and only if two blocks intersect in a flat. The resulting graph, which we denote by $T$, 
is the Bass-Serre tree associated with amalgamated product decomposition of $G$:
\[
G = G_1 *_{ \langle \, b, c \, | \, [b, c] \, \rangle} G_2.
\]

The quasi-redirecting boundary of $G$ consists of the following classes. We say a quasi-geodesic ray is \emph{transient} if its projection to $T$ departs every ball of bounded radius in $T$. To every transient quasi-geodesic ray $\alpha$, we can associate
an itinerary $A_i$, which is an infinite embedded path in $T$ exiting an end $\xi$. Given such $\xi$, 
there is a preferred quasi-geodesic $\alpha_1$ exiting $\xi$ that passes through the points $x_k$. 
We set $w_k = x_{k+1} \cdot x_k^{-1}$ and refer to $|w_k|$ as the excursion of $\alpha_1$ in the block $A_i$. 
Then $[\alpha_1]$ is different from $\bfz$ if and only if the excursion of $\alpha_1$ is sub-exponential with respect to the distance in $T$. That is, every class in $P(X)$ is either $\bfz$ or $[\alpha_1]$ for such $\alpha_1$. Lastly, by \cite[Corollary 11.11]{QR24}, two quasi-geodesic rays with sub-exponential excursion, and with different itineraries, are in different, unrelated classes. 
\begin{proposition}
Let $G$ be the group specified above. The set $G \cdot \bfz$ is not a dense subset of $\partial G$.
\end{proposition}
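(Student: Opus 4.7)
The plan is to observe that the $G$-orbit of $\bfz$ is a single point, and then to exhibit an element of $\partial G$ that does not lie in $\overline{\{\bfz\}}$. As recalled above, the class $\bfz$ contains exactly the non-transient quasi-geodesic rays together with the transient rays whose excursion along their itinerary is not sub-exponential. Both of these conditions are preserved by the $G$-action on $X$: the induced action on the Bass--Serre tree $T$ is by tree automorphisms, so ``bounded projection to $T$'' and ``per-block excursion growth along an itinerary'' are $G$-invariant properties of a quasi-geodesic ray. Hence $g\bfz = \bfz$ for every $g \in G$, and $G \cdot \bfz = \{\bfz\}$.

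Next, fix an end $\xi$ of $T$ with corresponding infinite embedded path $A = A_1 A_2 \cdots$, and take a $\qq_0$-ray $\alpha_1$ with itinerary $A$ whose block excursions $|w_k|$ grow sub-exponentially (for instance $|w_k| = k+1$); by the classification recalled above, $[\alpha_1]$ is a point of $\partial G$ distinct from $\bfz$, with neighbourhood basis $\{\calU([\alpha_1], r)\}_{r > 0}$ given by Definition~\ref{Def:The-In-Topology}. Since the neighbourhood basis characterization of the closure gives $[\alpha_1] \in \overline{\{\bfz\}}$ if and only if $\bfz \in \calU([\alpha_1], r)$ for every $r > 0$, it suffices to produce a single $\qq$-representative $\tilde\beta \in \bfz$ and a radius $r$ such that $\tilde\beta$ is \emph{not} $F_{[\alpha_1]}(\qq)$-redirectable to the central element $\alpha_0 \in [\alpha_1]$ at radius $r$.

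I would pick $\tilde\beta \in \bfz$ to be a transient $\qq_0$-ray with exponential block excursions along an itinerary $B$ that diverges from $A$ already at the very first block. Writing $\qq' := F_{[\alpha_1]}(\qq_0)$ for the fixed redirecting constants, any $\qq'$-redirection $\gamma_r$ of $\tilde\beta$ to $\alpha_0$ at radius $r$ would coincide with $\tilde\beta$ on $\tilde\beta|_r$ and eventually coincide with $\alpha_0$. As $r$ grows, $\tilde\beta(r)$ lies deeper and deeper inside the first block of $B$, at $X$-distance of order $r$ from the boundary flat of that block, while the tail of $\gamma_r$ lies in the disjoint first block of $A$. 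Tracking parameter length versus $X$-distance along $\gamma_r$, the forced detour needed to exit the first block of $B$ and enter the first block of $A$ violates the $\qq'$-quasi-geodesic inequality for $r$ sufficiently large, giving $\bfz \notin \calU([\alpha_1], r)$ and hence $[\alpha_1] \notin \overline{\{\bfz\}} = \overline{G \cdot \bfz}$.

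The main obstacle is executing the last length-versus-distance comparison precisely inside the RAAG/Salvetti geometry of $X$, where the combinatorics of $bc$--flats mediating between $G_1$- and $G_2$-blocks have to be unpacked to rule out a shortcut through adjacent blocks rather than back through the base flat. The essential input is the analysis of \cite[Section 11]{QR24} and in particular the proof of Corollary~11.11 there, which handles the analogous case of two distinct sub-exponential itineraries; the argument I propose adapts that technique to the mixed situation of an exponential-excursion representative of $\bfz$ being redirected to a sub-exponential $\alpha_0$, which amounts to showing $\bfz \not\preceq [\alpha_1]$ in the poset $P(X)$. Once this is in place, $[\alpha_1]$ lies outside $\overline{G \cdot \bfz}$, and the orbit $G \cdot \bfz$ is not dense in $\partial G$.
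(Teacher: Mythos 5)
Your first step (the orbit $G \cdot \bfz$ is the single point $\bfz$) and the reduction of non-density to exhibiting one class $\bfa$ with $\bfz \notin \calU(\bfa,r)$ for some $r$ are both fine, and they match the first half of the paper's argument. The gap is the second half: the entire content of the proposition now rests on the claim that your chosen class $[\alpha_1]$ (sub-exponential but unbounded excursion $|w_k|=k+1$) satisfies $\bfz \not\preceq [\alpha_1]$, and you do not prove this -- you explicitly defer it to an ``adaptation'' of \cite[Corollary 11.11]{QR24}, which treats a different comparison (two transient rays with \emph{sub-exponential} excursion and distinct itineraries), not the mixed case of an exponential-excursion representative of $\bfz$ against a sub-exponential target. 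Moreover, the length-versus-distance sketch you give is not obviously sound as stated: a redirecting quasi-geodesic is not forced to return near the basepoint when it leaves the first block of $B$, because the blocks are products (tree $\times\,\RR$) glued along $bc$--flats, and a path can in principle travel far from $\go$ through the flat/product directions to reach the itinerary of $A$. Ruling out such ``swinging around'' detours is exactly the delicate excursion analysis of \cite[Section 11]{QR24}; without it your proof of $\bfz\not\preceq[\alpha_1]$ is an assertion, not an argument.

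The paper avoids this geometric work entirely: after showing $g\cdot\bfz=\bfz$ (via the flat-bounding characterization of $\bfz$), it notes that density would force $\bfz\preceq \bfa$ for \emph{every} $\bfa\in\partial G$, and then chooses $\bfa$ to be a Morse class. By \cite[Proposition 6.5]{QR24} (sublinearly) Morse classes are minimal in $P(X)$, so $\bfz\preceq\bfa$ would give $\bfz\sim\bfa$ and hence that $\bfz$ is Morse, contradicting the fact that $\bfz$ contains a ray bounding a flat. You could repair your proposal in the same spirit with no new computation: either replace $|w_k|=k+1$ by a bounded-excursion (hence Morse) itinerary, or observe that your class is sublinearly Morse (its excursion grows like the square root of the distance along the ray) and invoke the minimality of sublinearly Morse classes recalled in Remark~\ref{remark: statement on QR applies to kappa boundary}. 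As written, however, the crucial non-redirectability step is missing.
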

\begin{proof}
Note that $\bfz$ is the only element in $\partial G$ that contains a geodesic ray that bounds a flat. Any $g \in G$ acts by isometry on $G$ and thus $g \cdot \bfz$ also bounds a flat. But $\bfz$ is the only element with such property thus $g \cdot \bfz = \bfz$ for any $g \in G$. Thus if there exists a sequence $\{ g_i \cdot \bfz\}$ that converges to $\bfa$ for each $\bfa \in \partial G$, then $\bfz$ is in every neighborhood of $\bfa$ and thus $bfz \preceq \bfa$. Choose $\bfa$ to be a Morse element and by \cite[Proposition 6.5]{QR24}, $\bfz \sim \bfa$ and $\bfz$ is also a Morse element, contradicting choice of $\bfz$.
\end{proof}
\begin{remark}
In this example, even though the cardinality of sublinearly Morse element is uncountable, the cardinality of \[ P(G)\setminus \pka G\] is also uncountable. That is to say, the quasi-redirecting boundary is much larger as a set than the sublinearly Morse boundary.
\end{remark}

\section{Existence of Morse elements}
Question 4.4 in \cite{QR24} asks if $G$ does not have an Morse element, is $P(G)$ a single point. In this section we answer the question in the affirmative for finite dimensional \CAT cube complexes.
\subsection{\CAT cube complexes and rank rigidity theorem}
A \emph{\CAT cube complex} is a simply connected cell complex where all of the cells are Euclidean cubes with edge length one, and such that the link of each vertex is a flag complex. A theorem of M. Gromov (see e.g. Theorem II.5.20 from \cite{BH1} for the finite-dimensional case and Theorem 40 from \cite{Lea} for the general case) states that $X$, endowed with the induced length metric, is a \CAT space. Moreover $X$ is \emph{complete} if and only if $X$ does not contain an infinite, ascending chain of cells (see \cite[ Theorem 31]{Lea}); in particular, this is the case if $X$ is locally finite-dimensional, in the sense that the supremum of the dimensions of cubes containing any given vertex is finite. In this section we are only concerned with finial dimensional, proper, \CAT cube complexes. The space $X$ is locally compact if and only if it is locally finite, in the sense that every vertex has finitely many neighbours.

An open conjecture due to Ballmann and Buyalo \cite{BB08}  classifies \CAT spaces by means of their rank. In particular, a complete geodesic is said to have \emph{rank 1} if it does not bound a flat half-plane. Accordingly, a \CAT space is called \emph{rank-one} if it contains a rank 1 geodesic, otherwise we say it has higher rank.
\begin{conjecture}
Let X be a locally compact \CAT space and let $G$ acts geometrically on $X$. If X contains a geodesic of rank 1, then it also contains a $G$--periodic geodesic of rank 1.
\end{conjecture}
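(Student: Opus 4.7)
The plan is to adapt Ballmann's rank rigidity strategy, originally carried out for Hadamard manifolds, to the general locally compact \CAT setting. Suppose $\gamma \subset X$ is a rank one geodesic with endpoints $\xi_\pm \in \bdy_\infty X$ (Tits boundary). The first step is to exploit the rank one condition quantitatively: since $\gamma$ bounds no flat half-plane, a standard flat strip argument in \CAT geometry produces a uniform constant $D$ such that nearest point projection onto $\gamma$ has diameter at most $D$ on any metric ball disjoint from $\gamma$. In other words, $\gamma$ is strongly contracting, and hence Morse in the classical sense.

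Second, I would use cocompactness of $G \curvearrowright X$ to produce a candidate axial isometry. Fix a compact fundamental domain and, for each $n$, choose $g_n \in G$ so that $g_n \cdot \gamma(0)$ lies within a bounded distance of $\gamma(n)$. Cocompactness plus properness of the \CAT metric force the translates $g_n \cdot \gamma$ to fellow travel $\gamma$ on arbitrarily long initial segments. After passing to a subsequence and exploiting the strong contracting property, I would argue that some $g := g_n^{-1} g_m$ (with $m \gg n$) has minimal displacement on $X$ attained along a bi-infinite geodesic $\tilde\gamma$ that uniformly fellow-travels $\gamma$. The element $g$ is then axial with axis $\tilde\gamma$, which is $\langle g \rangle$-periodic and inherits the no-flat-half-plane property from $\gamma$ by convexity of the distance function. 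In particular $\tilde\gamma$ is a $G$--periodic rank one geodesic.

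The main obstacle sits precisely at the transition from the contracting limit sequence $g_n$ to an honest axial isometry with an invariant geodesic. In Ballmann's original manifold proof this step uses the smooth structure and the Chen--Eberlein duality condition (the $G$--orbit of any $\xi_+ \in \bdy_\infty X$ meets every open set meeting the limit set). In the \CAT cube complex case, Caprace--Sageev replace duality by hyperplane combinatorics and double skewering. For a general locally compact \CAT space with only a cocompact isometric action, neither tool is available; one must establish duality intrinsically, which amounts to showing that rank one endpoints in $\bdy_\infty X$ form a $G$--minimal set. A feasible plan is therefore to reduce the conjecture to duality and then attempt to establish duality under the additional hypothesis that $\partial X$ (the QR boundary) exists: minimality of the $G$-action on $\pka X$ (Theorem~\ref{density}) and density of $\pka X$ in $\partial X$ (Theorem~\ref{Theorem: Miminality QR}) provide the dynamical input needed to translate a single rank one direction into a dense orbit of such directions, from which the standard limit construction above would yield the desired $G$--periodic rank one axis. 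The genuinely open portion of the conjecture is precisely the verification of this duality in the absence of either a smooth or cubical framework.
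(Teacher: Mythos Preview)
The statement you are attempting to prove is not a theorem of the paper: it is explicitly presented as an \emph{open conjecture} due to Ballmann and Buyalo, and the paper offers no proof. The paper only invokes the known special case for finite-dimensional \CAT cube complexes, namely the Caprace--Sageev rank rigidity theorem, and uses that as a black box in the proof of Theorem~\ref{T:morse}. So there is nothing to compare your proposal against.

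Your sketch is accurate as a description of the known landscape: you correctly isolate the duality condition (density of orbits of boundary points) as the missing ingredient separating Ballmann's manifold proof and Caprace--Sageev's cubical proof from the general case, and you correctly note that neither the smooth nor the hyperplane machinery is available. However, your proposed route to duality via Theorem~\ref{density} and Theorem~\ref{Theorem: Miminality QR} does not close the gap. Those results give minimality on $\partial_\kappa X$ and density of $\partial_\kappa X$ in $\partial X$, but the duality condition you need lives on the \emph{visual} boundary $\partial_\infty X$, and the relationship between $\partial X$ and $\partial_\infty X$ is not a homeomorphism in general. Moreover, both theorems require $|\partial_\kappa X| \geq 3$ (and the second requires that $\partial X$ exist at all), hypotheses which are not part of the conjecture and which you would first have to derive from the mere existence of a single rank one geodesic. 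Finally, even granting duality, the passage from ``$g_n \cdot \gamma$ fellow travels $\gamma$ on long segments'' to ``some $g_n^{-1} g_m$ is axial with axis uniformly close to $\gamma$'' is itself delicate in the singular \CAT setting: axiality requires the infimum of displacement to be attained, and this is where properness of the action (not just cocompactness) must be used carefully. Your write-up asserts this step rather than arguing it.

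In short: this is a genuine open problem, your outline identifies the known obstruction correctly, but the suggested patch through the paper's boundary results does not resolve it.
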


There are various results addressing the conjecture by adding extra conditions that support the conclusion, among the most famous is the following:

\begin{theorem}[Rank Rigidity of finite-dimensional CAT(0) cube complexes]\cite{CS11} Let X be a finite-dimensional \CAT cube complex and $G \leq Aut(X)$ be a group acting geometrically on $X$. Then there is a convex $G$-invariant subcomplex $Y \subseteq X$ such that either Y is a product of two unbounded cube subcomplexes or $G$ contains an element acting on $\gamma$ as a rank-one isometry.
\end{theorem}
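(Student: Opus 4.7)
The plan is to follow the combinatorial framework of hyperplanes in finite-dimensional \CAT cube complexes, where geodesics are tracked by the sequence of hyperplanes they cross, and a rank-one axis should be one that repeatedly encounters ``bottleneck'' hyperplanes. First I would reduce to an essential action. Call a half-space $h$ of $X$ \emph{essential} if there exists $g \in G$ with $gh \subsetneq h$, and call a hyperplane essential if it bounds an essential half-space. Discarding the non-essential hyperplanes (equivalently, taking the combinatorial convex hull of the axes of elements that skewer hyperplanes) produces a $G$-invariant convex subcomplex $Y \subseteq X$ on which every hyperplane is skewered by some element of $G$. This uses finite-dimensionality and the cocompactness of the action to ensure that the process terminates with an unbounded $Y$.

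Next I would analyze the crossing pattern of the hyperplanes of $Y$. If there is a nontrivial partition of the hyperplanes into two families $\mathcal{H}_1, \mathcal{H}_2$ such that every hyperplane of $\mathcal{H}_1$ crosses every hyperplane of $\mathcal{H}_2$, then the Sageev duality between pocsets of half-spaces and cube complexes produces a product decomposition $Y = Y_1 \times Y_2$ into two unbounded cube subcomplexes. Since the partition is canonically determined by the crossing graph, it is preserved (possibly with a swap) by $G$, so $Y$ is $G$-invariant as a product, giving the first alternative of the dichotomy.

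Otherwise, the crossing graph is irreducible, and I would aim to produce a rank-one element via the \emph{double skewering lemma}: any two disjoint essential hyperplanes are skewered by a single $g \in G$ with the same orientation. The key is to find two \emph{strongly separated} hyperplanes $\hat h_1, \hat h_2$, meaning disjoint hyperplanes such that no third hyperplane crosses both. Double-skewering such a pair yields an element $g$ whose combinatorial axis is threaded by an infinite sequence of strongly separated bottlenecks; in a \CAT cube complex this combinatorial condition is precisely what rules out a flat half-plane alongside the axis, and hence $g$ is a rank-one isometry.

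The main obstacle is establishing the existence of strongly separated hyperplanes in the essential, irreducible case. This is the combinatorial heart of the Caprace--Sageev argument and uses finite-dimensionality of $X$ in an essential way, via the Helly property for collections of pairwise-crossing hyperplanes combined with the irreducibility of the crossing graph, which together prevent any two disjoint essential hyperplanes from always being ``bridged'' by a third crossing both. A secondary subtlety is converting the combinatorial bottleneck property of the axis into the metric statement that it bounds no flat half-plane; this is handled by Sageev's duality, which guarantees that the \CAT metric and the half-space combinatorics are compatible enough that a combinatorial bottleneck forces a metric one.
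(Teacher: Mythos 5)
This theorem is not proved in the paper at all: it is quoted verbatim from Caprace--Sageev \cite{CS11} as an external input (note the statement even carries a typo, ``acting on $\gamma$'' should read ``acting on $Y$''), so there is no in-paper argument to compare yours against. What you have written is a faithful outline of the actual Caprace--Sageev proof: pass to the $G$-essential core (this is where cocompactness and finite dimensionality enter, and where ``essential'' gets upgraded to ``skewerable'' via their flipping and skewering lemmas); if the hyperplanes of $Y$ admit a partition into two families with every member of one crossing every member of the other, Sageev duality gives the product decomposition; otherwise the double skewering lemma applied to a pair of strongly separated hyperplanes produces an element whose axis crosses an infinite chain of strongly separated hyperplanes, and such an axis cannot bound a flat half-plane, so the element is rank-one. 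You also correctly locate the two hard points -- the existence of strongly separated hyperplanes in the essential irreducible case (which in \cite{CS11} also needs the absence of a fixed point at infinity, supplied here by the essential-core reduction for geometric actions) and the passage from the combinatorial bottleneck to the metric no-half-flat statement -- but you do not carry them out, so this is a correct proof sketch of the cited result rather than a self-contained proof. For the purposes of this paper, citing \cite{CS11} as the authors do is the appropriate treatment.
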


In this section, we will use the rank-rigidity theorem to obtain rank-one isomoetry in the existance of nontrivial QR-boundary. We first present a lemma that helps to establish when two geodesic rays are QR-equivalent. Assume for the rest of the this section that the QR boundary is well defined for the  \CAT cube complex in question.
\begin{lemma}\label{flatredirecting}
Let $X$ be a finite dimensional \CAT cube complex and let $\alpha, \beta$ be two geodesic rays emanating from the basepoint $\go$. Suppose there exists a $C>0$ such that the geodesic segments $[\alpha(n), \beta(n)]$ are such that there are infinitely many $n \in \NN$ where
\[ d(\go, [\alpha(n), \beta(n)]) \geq C \cdot n,\]
Then $\alpha \sim \beta$.
\end{lemma}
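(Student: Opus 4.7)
Plan: I will verify the QR-equivalence $\alpha \simeq \beta$ (the meaning of $\alpha \sim \beta$ here, given the context of the QR boundary section) by constructing an explicit quasi-redirection of $\alpha$ to $\beta$ at each radius $r$, with uniform quasi-geodesic constants depending only on $C$ and $\dim X$. The hypothesis is symmetric in $\alpha$ and $\beta$, so the same construction with the roles swapped will give $\beta \preceq \alpha$; hence it suffices to prove $\alpha \preceq \beta$.

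Given $r > 0$, pick $n$ from the infinite set of indices satisfying the hypothesis with $Cn > r$; then the segment $[\alpha(n),\beta(n)]$ lies outside the ball of radius $r$ about $\go$. The natural candidate for the redirection is
\[
\gamma_n := \alpha|_{[0,n]} * [\alpha(n),\beta(n)] * \beta|_{[n,\infty)},
\]
parametrized by arc length; by construction $\gamma_n|_r = \alpha|_r$ and $\gamma_n$ eventually coincides with $\beta$. Two elementary \CAT inputs then control the geometry of $\gamma_n$: the \CAT parallelogram inequality applied to the isoceles triangle with vertices $\go,\alpha(n),\beta(n)$ together with the hypothesis yields $d(\alpha(n),\beta(n)) \leq 2n\sqrt{1-C^2}$, so the middle piece has linearly-controlled length; and convexity of $d(\go,\cdot)$ along the segment gives $Cn \leq d(\go,y) \leq n$ for every $y \in [\alpha(n),\beta(n)]$. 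Together these give $d(\go, \gamma_n(t)) \geq \epsilon(C) \cdot t$ along $\gamma_n$, so $\gamma_n$ is already a quasi-geodesic ``rooted at $\go$''.

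The main work is to upgrade this to the full quasi-geodesic inequality between two arbitrary points $x,y \in \gamma_n$ with $(q,Q)$ independent of $n$. When both points lie on a single geodesic piece this is immediate; when they lie on different pieces one combines several lower bounds on $d(x,y)$, namely (i) the triangle inequality through $\go$, (ii) the triangle inequality through a corner $\alpha(n)$ or $\beta(n)$, and (iii) the $1$-Lipschitz property of closest-point projection onto the convex geodesic $\alpha$ (respectively $\beta$). The main obstacle is the degenerate regime where $x = \alpha(s)$ with $s \approx Cn$ and $y$ lies on the middle segment with $d(\alpha(n),y) \approx (1-C)n$: here all three of the generic bounds above simultaneously weaken. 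In a general \CAT space the \CAT inequality only produces \emph{upper} bounds on $d(x,y)$ via the Euclidean comparison triangle, so a lower bound is not automatic, and this is precisely where the cube-complex structure is essential. One uses hyperplanes of $X$ separating $x$ from $y$---namely those crossed by $\alpha|_{[s,n]}$ but not by $[\alpha(n),y]$, and conversely---to obtain a combinatorial count which bounds $d(x,y)$ from below by a constant multiple of $(n-s) + d(\alpha(n),y)$, with a multiplicative constant depending only on $\dim X$ via the comparison between the $\ell^1$-combinatorial and $\ell^2$-\CAT metrics.

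Combined with the routine cases this produces uniform $(q,Q)$ depending only on $C$ and $\dim X$, so $\gamma_n$ is a $(q,Q)$-quasi-geodesic redirecting $\alpha$ to $\beta$ at radius $r$. Since $r$ was arbitrary, $\alpha \preceq \beta$; by the symmetry noted at the outset, $\beta \preceq \alpha$, and hence $\alpha \simeq \beta$.
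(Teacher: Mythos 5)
Your preliminary steps are fine (the comparison bound $d(\alpha(n),\beta(n))\leq 2n\sqrt{1-C^2}$ and the bound $Cn\leq d(\go,y)\leq n$ on the chord), and you correctly isolate the hard case; but the step you propose to resolve it is a genuine gap, and as stated it is false. The walls you count are those in the symmetric difference of the wall sets of the two sides $\alpha|_{[s,n]}$ and $[\alpha(n),y]$; these do separate $x=\alpha(s)$ from $y$, but their number is $|W_1|+|W_2|-2|W_1\cap W_2|$, and nothing in your sketch controls the common walls $W_1\cap W_2$. The hypothesis $d(\go,[\alpha(n),\beta(n)])\geq Cn$ does not prevent the chord from backtracking along $\alpha$: one can build a $2$--dimensional \CAT square complex (a flat plane modified by long ``slit--and--pocket'' insertions just below the ray $\alpha$, at scales comparable to $Cn_k$; cone angles stay $\geq 2\pi$) in which, for infinitely many $n$, the geodesic $[\alpha(n),\beta(n)]$ is forced to run within bounded distance of $\alpha$ from radius $n$ back down to radius about $Cn$ before cutting over to $\beta(n)$, while still staying at distance $\geq Cn$ from $\go$. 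For such $n$, with $s$ in the middle of the backtracked range and $y$ the nearby chord point, $d(x,y)$ stays bounded while $(n-s)+d(\alpha(n),y)$ grows linearly; the two sides cross essentially the same hyperplanes, so the symmetric difference is negligible, your claimed lower bound fails, and your $\gamma_n$ is not a quasi-geodesic with constants independent of $n$. Any repair must make essential use of the hypothesis in exactly this regime, which your combinatorial count never does (a tree, where the hypothesis fails, already shows the count alone cannot give such a bound).

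This is precisely the difficulty the paper's proof is built to avoid, and it does so without any cube-complex input: instead of entering the chord through the corner $\alpha(n)$, it follows $[\go,\alpha(n)]$ only to an interior point $p$ (roughly the midpoint), jumps to the chord along a nearest-point projection segment, follows the chord to $\beta(n)$, and attaches the tail of $\beta$ by the segment-to-ray surgery; each concatenation is certified by the surgery statements in Lemma \ref{surgery}, so the constants are uniform (of the form $(36,0)$) with no degenerate case to analyze, at the price of redirecting $\alpha$ only at radius comparable to $n/2$, which still tends to infinity along the subsequence and hence suffices for $\alpha\preceq\beta$ (and symmetrically $\beta\preceq\alpha$). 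If you want to keep your outline, you should likewise cut over to $[\alpha(n),\beta(n)]$ from a point of $\alpha$ well inside the ball of radius $Cn$ via a projection segment, so that \CAT convexity rather than a wall count controls the concatenation, and claim $\gamma_n|_r=\alpha|_r$ only for $r$ a definite fraction of $n$.
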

\begin{proof}
By the convexity of \CAT geometry there exists a point $p \in [\go, \alpha(n)]$ and a point $q \in  [\alpha(n), \beta(n)]$ such that  
\[ d(p, \go) = \frac 12 n \text{ and } d(p, q) = d(p, [\alpha(n), \beta(n)] )\leq \frac 12 C \cdot n.\]
In fact let $p$ be a point that realizes the set distance between $[\go, p]$ and $[\alpha(n), \beta(n)]$. This is possible by the convexity of \CAT geometry. 

By Lemma \ref{surgery}, surgery I, the concatenation $[p, q] \cup [q, \beta(n)]$ is a $(3,0)$--geodesic segment. Furthermore, let $D =d(p ,q) =  d(p, [\alpha(n), \beta(n)]$. There exists a point $p' \in  [\go, \alpha(n)]$ and a point $q' \in [p, q]$ such that 
\[
d(p', q') = d(q',  [\alpha(n), \beta(n)]) = \frac 12 D = d(p', [q', q]\cup [q, \beta(n)].
\]
Likewise the segment $[p', q']$ realizes the set distance between $[\go, p']$ and $[q', q]$. Again by Lemma \ref{surgery}, surgery I, the concatenation 
\[
\ell : =[p',q'] \cup [[q', q]] \cup [q, \beta(n)]
\]
is a $(9,0)$-quasi-geodesic segment. Furthermore, consider the concatenation
\[
[\go, p'] \cup \ell.
\]
Let $x \in [\go, p']$ and $y \in [p', q']$, by construction $[\go, p'] \cup [p', q']$ is a $(3,0)$-quasi-geodesic segment. Suppose $x \in [\go, p']$ and $y \in [q', q]$. Then by  Lemma \ref{surgery}, surgery I again since $[q', q]$ is the path of nearest-point projection from $q$ to  $[\go, p'] \cup [p', q']$, we have that 
\[
[\go, p'] \cup [p', q'] \cup [q' q]
\]
is a $(9,0)$-quasi-geodesic segment. Lastly suppose $x \in [\go, p']$ and $y \in [q, \beta(n)]$, then 
\[
d(x, y) \geq d(p, q) \geq D \geq n
\]
On the other hand, the arc-length between $x$ and $y$ is bounded above by the arclength of $[\go, p'] \cup \ell$, which is less than $3n$. Therefore the quasi-geodesic constants for any pair of such points is bounded above by $(3,0)$. Thus all case considered The segment $[\go, p'] \cup \ell$ is a $(9,0)$-quasi-geodesic segment.
Lastly, apply Lemma \ref{surgery}, surgery III, to the segment $[\go, p'] \cup \ell$ and the ray $\beta(n, \infty)$, we get a $(36,0)$-quasi-geodesic ray that redirects $[\go, \alpha (\frac{n}{2})]$ to the tail of $\beta$. Since this holds for $n$ we have that $\alpha \preceq \beta$. The symmetric argument shows that $\beta \preceq \alpha$ and thus $\alpha \sim \beta$. 
\end{proof}

\begin{figure}
\begin{tikzpicture}[scale=1.2]
 \tikzstyle{vertex} =[circle,draw,fill=black,thick, inner sep=0pt,minimum size=.5 mm]
[thick, 
    scale=1,
    vertex/.style={circle,draw,fill=black,thick,
                   inner sep=0pt,minimum size= .5 mm},
                  
      trans/.style={thick,->, shorten >=6pt,shorten <=6pt,>=stealth},
   ]

  \node[vertex] (z) at (0,0)[label=below:$\go$]{}; 
  \draw[thick](0,0)--(-3.5, 3.5){};
   \node at (-3.5, 3.5) [label=left: $\alpha_0$]{};
  
\draw[thick](0,0)--(5,0){};
 \node at (5, 0)[label=right:$\beta_0$]{};
 \node [vertex] at (-3, 3)[label=left:$\alpha_0(n)$]{};
  \node [vertex] at (4, 0)[label=below:$\beta_0(n)$]{};
  \draw[thick](-3, 3)--(4,0){};
   \node [vertex] at (-1.5, 1.5)[label=below left:$p$]{};
    \node[vertex]  at (-1.15,2.2)[label=above right:$q$]{};
  \draw [thick, blue] (-1.5,1.5)--(-1.15,2.2){};
   \draw [thick, red] (-1.1,1.1)--(-1.3,1.9){};
    \node[vertex]  at (-1.1, 1.1)[label=below left:$p'$]{};
     \node[vertex]  at (-1.3,1.9)[label=left:$q'$]{};
  \end{tikzpicture}
\end{figure}

\begin{theorem}\label{T:morse}
Let $G$ be a finitely generated group acting geometrically on $X$ where $X$ is a \CAT cube complex.  If $\partial X$ exists and $|\partial X| \geq 2$, then there exists a Morse element in $G$.
\end{theorem}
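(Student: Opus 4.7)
The plan is to invoke the rank-rigidity theorem for finite-dimensional CAT(0) cube complexes on the geometric action $G \curvearrowright X$, which produces a convex $G$-invariant subcomplex $Y \subseteq X$ such that either (i) some $g \in G$ acts on $Y$ as a rank-one isometry, or (ii) $Y$ decomposes as a product $Y_1 \times Y_2$ with both factors unbounded. In case (i), the rank-one axis is a Morse geodesic (rank-one CAT(0) geodesics are contracting, hence Morse), so $g$ is a Morse element of $G$ and we are done. The remainder of the argument is devoted to ruling out case (ii) under the hypothesis $|\partial X| \geq 2$.

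First I would reduce to studying $\partial Y$. Since $G$ acts cocompactly on $X$ with compact fundamental domain $K$, and $Y$ is $G$-invariant, every $x \in X$ satisfies $d(x, Y) \leq \sup_{k \in K} d(k, Y) < \infty$; combined with convexity of $Y$, the inclusion $Y \hookrightarrow X$ is a quasi-isometry. By the QI-invariance of the quasi-redirecting boundary (\cite[Theorem 5.9]{QR24}), $\partial Y \cong \partial X$, so it suffices to prove $|\partial Y| = 1$.

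To establish $|\partial Y| = 1$, I fix geodesic rays $\gamma_1 \subset Y_1$ and $\gamma_2 \subset Y_2$ (available since both factors are unbounded) and set $\gamma(t) := (\gamma_1(t), o_2)$ and $\delta(t) := (o_1, \gamma_2(t))$. Every geodesic ray from $\go = (o_1, o_2)$ in the product has product form $\alpha(t) = (\alpha_1(at), \alpha_2(bt))$ with $a^2 + b^2 = 1$, and the subset $\alpha_1([0,\infty)) \times \alpha_2([0,\infty))$ is an isometrically embedded Euclidean quadrant in $Y$. A direct Euclidean computation inside this quadrant shows that the distance from $\go$ to the geodesic segment joining $\alpha(n)$ to $(\alpha_1(n), o_2)$ is at least $n/\sqrt{2}$ for every $n$; applying Lemma~\ref{flatredirecting} yields $\alpha \simeq \alpha'$, where $\alpha'(t) := (\alpha_1(t), o_2)$. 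Using the flat quadrants $\alpha_1([0,\infty)) \times \gamma_2([0,\infty))$ and $\gamma_1([0,\infty)) \times \gamma_2([0,\infty))$ with the same linear lower bound, we obtain $\alpha' \simeq \delta \simeq \gamma$, and hence $\alpha \simeq \gamma$ by transitivity. Thus every geodesic ray from $\go$ in $Y$ lies in the single QR-class $[\gamma]$.

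The principal obstacle is to upgrade this from geodesic rays to arbitrary QR-classes, since $\partial Y$ is defined using all quasi-geodesic rays. I would address this by showing that in a proper cocompact CAT(0) cube complex every QR-class contains a geodesic representative: a $\qq_0$-quasi-geodesic ray $\eta$ converges to a unique visual boundary point $\xi$, and the geodesic $\eta^*$ from $\go$ to $\xi$ is QR-equivalent to $\eta$ via a surgery argument modeled on Lemma~\ref{surgery}. Once this is granted, the previous paragraph forces $\partial Y = \{[\gamma]\}$, contradicting $|\partial X| \geq 2$. Therefore only case (i) of the rank-rigidity dichotomy can occur, producing the desired Morse element of $G$.
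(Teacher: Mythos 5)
Your overall strategy (apply rank rigidity first, then rule out the product case using $|\partial X|\geq 2$) is genuinely different from the paper's, and most of its individual steps are sound: $Y$ is coarsely dense by cocompactness, so $P(Y)\cong P(X)$; geodesic rays in $Y_1\times Y_2$ do have the product form you describe; and the flat-quadrant computation together with Lemma~\ref{flatredirecting} does show that all \emph{geodesic} rays based at $\go$ fall into a single QR class. The genuine gap is the final reduction, namely the claim that every QR class contains a geodesic representative because ``a $\qq_0$-quasi-geodesic ray converges to a unique visual boundary point.'' That statement is false precisely in the situation where you need it: the product case is exactly the case with flat quadrants, and inside the convex quadrant $\alpha_1([0,\infty))\times\alpha_2([0,\infty))$ one can build staircase paths, monotone in both coordinates with steps growing geometrically, which are $(2,0)$-quasi-geodesic rays whose direction oscillates between two distinct angles and which therefore converge to no point of the visual boundary (logarithmic spirals in flat planes are another instance). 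So the geodesic $\eta^*$ you want need not exist. Moreover, even for a quasi-geodesic ray that does converge to a visual point $\xi$, visual asymptoticity alone does not give QR-equivalence with the geodesic from $\go$ to $\xi$: the surgeries in Lemma~\ref{surgery} require quantitative closeness (e.g. the hypothesis $d(\beta_\rr,\gamma)\leq \rr/2$), which you have not arranged. Note that Assumption 1 only supplies a uniform-constant quasi-geodesic representative in each class, not a geodesic one, so this reduction is a real missing ingredient and not a formality. (A smaller inaccuracy: it is rank-one \emph{axes of isometries}, not arbitrary rank-one geodesics, that are contracting; since case (i) hands you an axis, your conclusion there is still fine.)

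For comparison, the paper never needs to determine the entire boundary of a product. It starts from two distinct classes $\bfa\neq\bfb$ with geodesic representatives $\alpha_0,\beta_0$, examines the connecting segments $[\alpha_0(i),\beta_0(i)]$, and splits into cases: if a subsequence of these segments meets a bounded ball, Arzel\`a--Ascoli produces a bi-infinite limit geodesic that must be rank one (otherwise its two halves bound a flat and $\bfa=\bfb$); if $d(\go,[\alpha_0(i),\beta_0(i)])$ grows linearly, Lemma~\ref{flatredirecting} forces $\bfa=\bfb$, a contradiction; and in the remaining sublinear regime $\beta_0$ is shown to be $\kappa$-Morse, hence rank one. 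Only then is rank rigidity invoked to promote the rank-one geodesic to a rank-one, hence Morse, element. If you wish to keep your route, you would need an independent argument that $P(Y_1\times Y_2)$ is a single point at the level of \emph{quasi-geodesic} rays, for instance a direct redirecting construction in the product that uses one factor to make progress while the other backtracks, rather than the convergence-to-the-visual-boundary shortcut.
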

\begin{proof}
Let $\bfa \neq \bfb \in P(X)$ be two equivalence classes in $\partial X$. Let $\alpha_0 \in \bfa$, $\beta_0 \in \bfb$ be geodesic representatives. Consider the geodesic segments connecting pairs of points $[\alpha_0(i), \beta_0(i)], i =1,2,3...$. If an infinite sub-sequence of $\{[\alpha_0(i), \beta_0(i)] \}$ intersect a ball of bounded radius, then by Arzel\`a-Ascoli Theorem, the sequence $\{[\alpha_0(i), \beta_0(i)] \}$ converges to a bi-infinite geodesic ray $\gamma_0$. If $\gamma_0$ is not rank-one, than it bounds a flat  and thus the two half lines of $\gamma_0$ are equivalent to each other. However, the two halves of $\gamma_0$ each converges to the $\alpha_0$ and $\beta_0$, thus $\alpha_0 \sim \beta_0$ contradicting to our assumption. Thus $\gamma_0$ is necessarily rank-one. By the Rank-rigidity Theorem, there exists a rank-one element in $G$, which is necessarily Morse.
Otherwise,  let us assume that there exists a $C > 0$ such that 
\[d(\go,[\alpha_0(i), \beta_0(i)] \geq C \cdot i. \]
Lemma~\ref{flatredirecting} shows that $\alpha_0 \sim \beta_0$, contradicting our assumption. 

Therefore the only remaining possibility is that $\alpha_0 \nsim \beta_0$ implies for all subsequences of $\{[\alpha_0(i), \beta_0(i)] \}$, the distances
$d(\go,[\alpha_0(i), \beta_0(i)]$ grow at most sublinearly with respect to $i$. In fact, for any $x \in X$, consider the distances $d(\go, [x,\beta_0(i)]$. If for any infinite sequence of $x_j$ such that $\Norm{x_j} \to \infty$,  we get $d(\go, [x_j,\beta_0(i)]$ is at least linear in $i, j$, then the points $x_j$ lies on a ray that is in the same QR-class as $\beta_0$. Thus for all points $x$ not on a ray that is in a same QR-class as $\beta_0$, suppose $d(\go, [x_j,\beta_0(i)]$ grows at most sublinearly, then by $\kappa$-slim condition, $\beta$ is $\kappa$-Morse. Since $\kappa$-Morse rays are rank-one and by Rank-rigidity Theorem again there exists a Morse element in $G$.
\end{proof}

Lastly recall we say $P(X)$ is a \emph{visibility space}  for any two points $\bfa, \bfb \in P(X)$, then there exists a geodesic $\gamma$ with $\gamma(\infty) \in \bfa$ and $\gamma(-\infty) \in \bfb$. 

\begin{corollary}
Let $X$ be a proper \CAT metric space with a cocompact action. Then $P(X)$ is a visibility space.
\end{corollary}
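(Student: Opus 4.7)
The plan is to mirror the trichotomy from the proof of \thmref{T:morse}, extracting in each surviving case a bi-infinite geodesic that realizes visibility. Fix distinct $\bfa, \bfb \in P(X)$ with geodesic representatives $\alpha_0 \in \bfa$ and $\beta_0 \in \bfb$ based at $\go$, and analyze the segments $\sigma_n = [\alpha_0(n), \beta_0(n)]$ according to the growth of $d(\go, \sigma_n)$ as $n \to \infty$.

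In the first case, where a subsequence of $\sigma_n$ stays within bounded distance of $\go$, properness of $X$ together with Arzel\`a--Ascoli produces a bi-infinite geodesic limit $\gamma_0$ passing through a bounded neighbourhood of $\go$. Since $\sigma_n$ has endpoints $\alpha_0(n)$ and $\beta_0(n)$, the two halves of $\gamma_0$ are visually asymptotic to $\alpha_0$ and $\beta_0$ respectively. In a \CAT space visually asymptotic geodesic rays fellow-travel at bounded distance, and a short surgery (in the spirit of \lemref{flatredirecting}, with a bounded lateral gap in place of the linear one) upgrades this to uniform $(q, Q)$-redirections at every radius $r$, yielding $\gamma_0(+\infty) \in \bfa$ and $\gamma_0(-\infty) \in \bfb$. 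The second case, $d(\go, \sigma_n) \geq C n$ linearly along a subsequence, is eliminated immediately by \lemref{flatredirecting} (whose proof uses only \CAT convexity and the surgery estimates of \lemref{surgery}, so extends to any proper \CAT space), since it would force $\alpha_0 \simeq \beta_0$ and contradict $\bfa \neq \bfb$.

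The remaining case is that $d(\go, \sigma_n) \to \infty$ but only sublinearly in $n$. Here I would reuse the symmetric observation from the end of the proof of \thmref{T:morse}: if no point $x$ in a different QR class from $\beta_0$ makes the triangle $[\go, x, \beta_0(n)]$ linearly thin at its center, then $\beta_0$ is $\kappa$-Morse for some sublinear $\kappa$, and the same argument applied to $\alpha_0$ (possibly after enlarging $\kappa$) shows that both rays are $\kappa$-Morse. Since the sublinearly Morse boundary is a visibility space in the \CAT setting \cite{DZ}, there is a bi-infinite geodesic $\gamma$ whose two rays sublinearly fellow-travel $\alpha_0$ and $\beta_0$; sublinear fellow-traveling implies QR-equivalence via the surgery estimates of \lemref{surgery}, so $\gamma(\pm\infty) \in \bfa, \bfb$. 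The main obstacle is exactly this sublinear case: one must confirm that the $\kappa$-Morse deduction applies symmetrically to both rays with a common sublinear gauge, so that \cite{DZ} is genuinely available on the pair $(\alpha_0, \beta_0)$; the bounded case is routine surgery, whose only delicate point is the uniformity of the redirecting quasi-geodesic constants in $r$.
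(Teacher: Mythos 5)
Your proposal follows essentially the same route as the paper: the same trichotomy on the growth of $d(\go,[\alpha_0(n),\beta_0(n)])$ borrowed from the proof of \thmref{T:morse}, with the bounded case handled by Arzel\`a--Ascoli, the linear case eliminated by \lemref{flatredirecting}, and the sublinear case reduced to sublinear Morseness. The only divergence is in that last case: the paper concludes merely that $\beta_0$ is sublinearly Morse and invokes \cite{Z}, which produces a bi-infinite geodesic with just one sublinearly Morse endpoint, thereby sidestepping the obstacle you flag about needing a common gauge for both rays in order to apply \cite{DZ}.
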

\begin{proof}
Let $X$ be a  proper \CAT metric space and let $\bfa \nsim \bfb $ be two equivalence classes in $P(X)$. As argued in Theorem~\ref{T:morse}, there are two possible cases to the limit of geodesic lines $[\alpha_0(n), \beta_0(n)]$:
\begin{enumerate}
\item an infinite subsequence of them do not exceed a bounded ball, in this case there exists a bi-infinite geodesic line;
\item  all subsequences move away from $\go$ at most sublinearly in $n$ as $n \to \infty$. In this case $\beta_0$ is a sublinearly Morse ray and by \cite{Z} there exists a bi-infinite geodesic.
\end{enumerate} 
Note that Definition~\ref{Def:Redirection} does not require quasi-geodesic rays to start at the same point of origin. Thus it follows from construction that $\gamma(\infty) \in \bfa$ and $\gamma(-\infty) \in \bfb$.  Therefore $P(X)$ is a visibility space.
\end{proof}

\bibliographystyle{alpha}

\end{document}